\documentclass[11pt]{amsart}
\usepackage[colorlinks=true, pdfstartview=FitV, linkcolor=blue, 
citecolor=blue]{hyperref}

\usepackage{amssymb,bbm,amscd}
\usepackage{graphicx}
\usepackage{a4wide}

\theoremstyle{plain}
\newtheorem{theorem}{Theorem}[section]
\newtheorem{prop}[theorem]{Proposition}
\newtheorem{lemma}[theorem]{Lemma}
\newtheorem{coro}[theorem]{Corollary}
\newtheorem{fact}[theorem]{Fact}

\theoremstyle{definition}
\newtheorem{remark}[theorem]{Remark}

\numberwithin{equation}{section}
%\numberwithin{table}{section}
%\numberwithin{figure}{section}
 
\newcommand{\dd}{\,\mathrm{d}}
\newcommand{\ii}{\ts\mathrm{i}\ts}
\newcommand{\ee}{\,\mathrm{e}}
\newcommand{\ts}{\hspace{0.5pt}}
\newcommand{\nts}{\hspace{-0.5pt}}

\DeclareMathOperator{\dens}{\mathrm{dens}}
\DeclareMathOperator{\card}{\mathrm{card}}

\newcommand{\fm}{\mathfrak{m}}
\newcommand{\fM}{\mathfrak{M}}

\newcommand{\vL}{\varLambda}
\newcommand{\vU}{\varUpsilon}
\newcommand{\cA}{\mathcal{A}}
\newcommand{\cB}{\mathcal{B}}
\newcommand{\cD}{\mathcal{D}}
\newcommand{\cE}{\mathcal{E}}
\newcommand{\cI}{\mathcal{I}}

\newcommand{\cO}{\mathcal{O}}
\newcommand{\cT}{\mathcal{T}}
\newcommand{\ZZ}{\mathbb{Z}\ts}
\newcommand{\RR}{\mathbb{R}\ts}
\newcommand{\CC}{\mathbb{C}\ts}

\newcommand{\MM}{\mathbb{M}}
\newcommand{\NN}{\mathbb{N}}
\newcommand{\TT}{\mathbb{T}}
\newcommand{\XX}{\mathbb{X}}
\newcommand{\YY}{\mathbb{Y}}

\newcommand{\exend}{\hfill $\Diamond$}

\newcommand{\Mat}{\mathrm{Mat}}

\newcommand{\myfrac}[2]{\frac{\raisebox{-2pt}{$#1$}}
      {\raisebox{0.5pt}{$#2$}}}

\begin{document}

\title[A family of binary inflation rules]{Spectral analysis
of a family of binary inflation rules }

\author{Michael Baake} 
\address{Fakult\"{a}t f\"{u}r Mathematik,
  Universit\"{a}t Bielefeld,\newline \hspace*{\parindent}Postfach
  100131, 33501 Bielefeld, Germany}
\email{$\{$mbaake,cmanibo$\}$@math.uni-bielefeld.de}

\author{Uwe Grimm} 
\address{School of Mathematics and Statistics,
  The Open University,\newline \hspace*{\parindent}Walton Hall, 
  Milton Keynes MK7 6AA, United Kingdom} 
\email{uwe.grimm@open.ac.uk}

\author{Neil Ma\~{n}ibo}

\begin{abstract}
  The family of primitive binary substitutions defined by
  $1 \mapsto 0 \mapsto 0 1^m$ with \mbox{$m\in\NN$} is investigated.
  The spectral type of the corresponding diffraction measure is
  analysed for its geometric realisation with prototiles (intervals)
  of natural length. Apart from the well-known Fibonacci inflation
  ($m=1$), the inflation rules either have integer inflation factors,
  but non-constant length, or are of non-Pisot type. We show that
  all of them have singular diffraction, either of pure point type
  or essentially singular continuous.
\end{abstract}

\maketitle
\thispagestyle{empty}

\section{Introduction}

Due to the general interest in substitutions with a multiplier that
is a Pisot--Vijayaraghavan (PV) number, and the renewed interest in
substitutions of constant length, other cases and classes have been a
bit neglected. In particular, the analysis of non-PV inflations is
clearly incomplete, although they should provide valuable insight into
systems with singular continuous spectrum. This was highlighted in a
recent example \cite{BFGR}, where the absence of absolutely continuous
diffraction could be shown via estimates of certain Lyapunov
exponents. The same method can also be used for substitutions of
constant length \cite{BG15,Neil} to re-derive results that are
known from \cite{Q,Bart} in an independent way.

Here, we extend these methods to an entire family of binary inflation
rules, namely those derived from the substitutions
$1 \mapsto 0 \mapsto 0 1^m$ with $m\in\NN$ by using prototiles of
natural length.  The inflations are not of constant length, and all
have singular spectrum (either pure point or mainly singular
continuous), as previously announced in \cite{BG-conf}. More
precisely, we prove the following result, the concepts and
details of which are explained as we go along.

\begin{theorem}\label{thm:main-goal}
  Consider the primitive, binary inflation rule\/
  $1 \mapsto 0 \mapsto 01^m$ with\/ $m\in\NN$, and let\/
  $\widehat{\gamma^{}_{u}}$ be the diffraction measure of the
  corresponding Delone dynamical system that emerges from the left
  endpoints of the tilings with two intervals of natural length,
  where\/ $u = (u^{}_{0}, u^{}_{1})$ with\/ $u^{}_{0} u^{}_{1} \ne 0$
  are arbitrary complex weights for the two 
  types of points. Then, one has the following three cases.
\begin{enumerate}\itemsep=2pt
  \item For\/ $m=1$, this is the well-known Fibonacci chain, which
    has pure point diffraction and, equivalently, pure point dynamical
    spectrum.
  \item When\/ $m=\ell (\ell+1)$ with\/ $\ell\in\NN$, the inflation
    multiplier is an integer, and the diffraction measure as well
    as the dynamical spectrum is once again pure point.
  \item In all remaining cases, the inflation tiling is of non-PV
    type, and the diffraction measure, apart from the trivial peak
    at\/ $0$, is purely singular continuous.
\end{enumerate}
\end{theorem}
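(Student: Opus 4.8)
\emph{Setup and the three regimes.} The starting point is the abelianisation $M=\bigl(\begin{smallmatrix}1&1\\m&0\end{smallmatrix}\bigr)$ of the rule $1\mapsto0\mapsto01^m$, with characteristic polynomial $x^2-x-m$, Perron--Frobenius eigenvalue $\lambda=\tfrac12\bigl(1+\sqrt{1+4m}\bigr)$ and second eigenvalue $\lambda'=1-\lambda$. Thus $\lambda\in\ZZ$ exactly when $1+4m$ is a square, that is, for $m=\ell(\ell+1)$ with $\lambda=\ell+1$; otherwise $\lambda$ is a quadratic irrationality, and then the Pisot condition $|\lambda'|<1$ holds only for $m=1$, while $|\lambda'|=\lambda-1>1$ for every $m\ge2$ --- this is precisely the trichotomy of the theorem. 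Using the left PF eigenvector, the natural prototile lengths are $\lambda$ and $1$, so the geometric inflation factor is $\lambda$ and the control points lie in $\ZZ+\ZZ\lambda$ (in $\ZZ$ when $m=\ell(\ell+1)$). Two general facts will be used throughout: (i) for these Delone dynamical systems and the natural weighted combs, pure point diffraction is equivalent to pure point dynamical spectrum (Baake--Lenz), and in general the Bragg part of $\widehat{\gamma_u}$ is supported on the group of dynamical eigenvalues, for any weights; and (ii) the renormalisation approach of \cite{BFGR,BG15,Neil}, which encodes the pair correlations through the Fourier matrix
\[
  B(k)\;=\;\begin{pmatrix}1&1\\ \ee^{2\pi\ii\lambda k}\sum_{r=0}^{m-1}\ee^{2\pi\ii rk}&0\end{pmatrix},\qquad B(0)=M,
\]
and in which, writing $B^{(n)}(k)=B(\lambda^{n-1}k)\cdots B(k)$, the absolutely continuous component of $\widehat{\gamma_u}$ vanishes as soon as the extremal Lyapunov exponent $\chi(k)=\lim_n\tfrac1n\log\lVert B^{(n)}(k)\rVert$, which exists and is a.e.\ constant (Furstenberg--Kesten over the $\lambda$-map on $[0,1)$), satisfies $\chi<\tfrac12\log\lambda$.

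\emph{Cases \textup{(1)} and \textup{(2)}.} For $m=1$ (Fibonacci) one invokes that the chain is a regular model set, hence has pure point diffraction and pure point dynamical spectrum. For $m=\ell(\ell+1)$ the plan is to turn the geometric data into something one can decide directly: since $\lambda=\ell+1$ and the prototile lengths are $\ell+1$ and $1$, chopping the long prototile into $\ell+1$ unit cells recodes the inflation as a \emph{constant-length} substitution $\zeta$ of length $\ell+1$ on the alphabet $\{a_0,\dots,a_\ell,b\}$, with $a_0\mapsto a_0\cdots a_\ell$, $a_i\mapsto b^{\,\ell+1}$ for $1\le i\le\ell$, and $b\mapsto a_0\cdots a_\ell$; this is primitive and aperiodic. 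One then checks that it admits a coincidence in the sense of Dekking --- here already $\zeta^2(x)$ begins with $a_0$ for every letter $x$ --- so that, after the routine reduction by the height (which equals $\ell+1$ and only contributes an equicontinuous cyclic factor), the subshift has pure point spectrum; the suspension with unit cells then has pure point dynamical spectrum, whence pure point diffraction for all weights by (i). Equivalently, and more transparently, the integer inflation exhibits the control point set as a limit-periodic set, i.e.\ a regular model set over the $(\ell+1)$-adic internal space, the coincidence being exactly what makes the factor onto the odometer almost surely one-to-one.

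\emph{Case \textup{(3)}: the core.} Now $\lambda$ is a quadratic irrationality that is neither Pisot nor (for degree reasons) Salem, so the tiling flow is weakly mixing by Solomyak's criterion; hence $\widehat{\gamma_u}$ has no Bragg peak except the trivial one at $0$, of intensity $|u_0\dens_0+u_1\dens_1|^2$ (which may itself vanish for special $u$). It remains to rule out an absolutely continuous component, and this is the heart of the matter. Following \cite{BFGR}, one bounds $\chi$ from above, by sub-multiplicativity, through $\tfrac1N\int_0^1\log\lVert B^{(N)}(k)\rVert\dd k$ (Lebesgue measure being comparable to the relevant invariant measure) for a suitable $N$, exploiting the explicit trigonometric structure of $B$; concretely, the iterates have the continuant form $\bigl(B^{(n)}(k)\bigr)_{11}=\sum_S\prod_{j\in S}b(\lambda^j k)$ with $b(t)=\ee^{2\pi\ii\lambda t}\sum_{r=0}^{m-1}\ee^{2\pi\ii rt}$, the sum over subsets $S\subseteq\{0,\dots,n-2\}$ with no two consecutive elements, and $\int_0^1\log|b(t)|\dd t=0$. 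For large $m$ this is soft: the Dirichlet factor $|b|$ is of order $m$ only on a set of measure $\asymp 1/m$, so $\int_0^1\log\bigl(1+|b(k)|^2\bigr)\dd k$ stays bounded while $\tfrac12\log\lambda\sim\tfrac14\log m\to\infty$; for the finitely many small $m$, where even the one-step bound exceeds $\tfrac12\log\lambda$, one needs the sharper, possibly computer-assisted, estimates of \cite{BFGR} on a higher iterate, controlling how often and how closely the orbit $\{\lambda^j k\bmod1\}$ meets the lattice $\ZZ$. Once $\chi<\tfrac12\log\lambda$ is secured the absolutely continuous part is absent, and since the autocorrelation is a non-trivial pure point measure on the locally finite set $\vL-\vL$ it cannot be a multiple of Lebesgue measure, so the continuous part of $\widehat{\gamma_u}$ is non-zero: $\widehat{\gamma_u}$ is purely singular continuous apart from the central peak.

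\emph{Expected main obstacle.} The decisive step is the Lyapunov bound $\chi<\tfrac12\log\lambda$ in case (3). It is not a formality: the naive estimate $\int_0^1\log\lVert B(k)\rVert\dd k$ already overshoots $\tfrac12\log\lambda$ at the smallest non-trivial $m$, so genuine structural input is required --- higher iterates with real cancellation, the continuant/continued-fraction mechanism, or rigorous numerics --- exactly as in the single-example treatment of \cite{BFGR}. Everything else is either classical (case 1), a recoding plus Dekking's criterion (case 2), or a citation of weak mixing for non-Pisot inflations together with the soft observation that the autocorrelation is non-constant (case 3).
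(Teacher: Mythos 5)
Your overall strategy coincides with the paper's: case (1) is classical, case (2) is handled by recoding to a constant-length substitution plus Dekking's coincidence criterion, and case (3) combines Solomyak's non-PV criterion (no non-trivial Bragg peaks) with the renormalisation cocycle $B^{(n)}$ and an upper bound on its Lyapunov exponent, first in one step for large $m$ via the boundedness of the relevant logarithmic Mahler measure, then via higher iterates and controlled numerics for small $m$. Your recoding in case (2) (chopping the long tile into $\ell+1$ unit cells, alphabet of size $\ell+2$) differs from the paper's binary recoding $a\mapsto ab^{\ell}$, $b\mapsto a^{\ell+1}$, which instead groups the blocks $1^{\ell(\ell+1)}$ into letters $b$; both are legitimate, and yours avoids the recognisability step for those blocks. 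Note only that the height of a length-$q$ substitution is by definition coprime to $q$, so it cannot equal $\ell+1$; here it is $1$ and no reduction is needed.

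The genuine gap lies in the ergodic-theoretic underpinning of case (3). You invoke Furstenberg--Kesten ``over the $\lambda$-map on $[0,1)$'' for the existence and a.e.\ constancy of $\chi(k)$, and you bound it by $\frac{1}{N}\int_{0}^{1}\log\|B^{(N)}(k)\|\dd k$. But in every instance of case (3) the multiplier $\lambda$ is a non-integral irrational, so $k\mapsto B(k)$ contains $\ee^{2\pi\ii\lambda k}$, is not $1$-periodic and does not descend to $[0,1)$; moreover $(\lambda^{n}k\bmod 1)$ is \emph{not} an orbit of $x\mapsto\lambda x\bmod 1$, so the subadditive ergodic theorem does not apply in the form you use, and for $N\geqslant 2$ (which is needed for all the hard small $m$) the function $\|B^{(N)}(\cdot)\|$ is genuinely quasiperiodic, so that $\int_{0}^{1}$ is not even its mean. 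The paper circumvents all of this by (i) working only with $\limsup$'s, for which no existence theorem is required; (ii) lifting $B$ to a doubly $1$-periodic matrix $\tilde{B}(x,y)$ on $\TT^{2}$ via $k\mapsto(\lambda k,k)$ and computing means as integrals over $\TT^{2}$; and (iii) justifying the a.e.-$k$ convergence of the Birkhoff-type averages (for $\log|\det B|$, and the corresponding upper bounds for the norms) by Sobol's theorem on averaging almost periodic functions along exponential sequences, which needs the uniform distribution of $(\lambda^{n}k)$, a discrepancy estimate, and the fact that the singular set $Z_{m}=\frac{1}{m}\ZZ\setminus\ZZ$ is Delone and is approached no faster than $n^{-(1+\varepsilon)}$. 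Since only an upper bound on a $\limsup$ is needed, your plan is repairable along exactly these lines, but as written the central analytic step is unjustified. A smaller omission: the passage from the $4\times4$ cocycle $A=B\otimes\overline{B}$ governing the Radon--Nikodym densities to the $2\times2$ cocycle $B$ rests on the a.e.\ rank-one structure of the Hermitian density matrix, which the paper must prove (via the positivity blow-up argument); you import this wholesale from \cite{BFGR}, which is acceptable for a sketch but is a required ingredient, not a formality.
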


The article is organised as follows. We begin with the introduction of
our family of inflations and their properties in
Section~\ref{sec:general}, where the cases (1) and (2) of
Theorem~\ref{thm:main-goal} will already follow, and then discuss
the displacement structure and its consequence on the pair
correlations in the form of exact renormalisation relations in
Section~\ref{sec:pair}. This has strong implications on the
autocorrelation and diffraction measures (Section~\ref{sec:auto}),
which are then further analysed via Lyapunov exponents in
Section~\ref{sec:Lyapunov}.  The main result here is the absence of
absolutely continuous diffraction for all members of our family of
inflation systems. One ingredient is the logarithmic Mahler measure of
a derived family of polynomials, which we analyse a little further in
an Appendix.

\section{Setting and general results}\label{sec:general}

Consider the family of primitive substitution rules on the binary
alphabet $\{0,1\}$ given by
\[
   \varrho_{m}^{}: \quad 0\mapsto 01^m\, ,\; 1\mapsto 0\, ,
   \quad
   \text{with } m\in\mathbb{N}. 
\]
Its substitution matrix is
$M_{m}=\left(\begin{smallmatrix}1 & 1\\ m &
    0 \end{smallmatrix}\right)$
with eigenvalues
$\lambda_{m}^{\pm} =\frac{1}{2} \bigl(1\pm\sqrt{4m+1}\,\bigr)$, which
are the roots of $x^2 - x -m=0$. Whenever the context is clear, we
will simply write $\lambda$ instead of $\lambda^{+}_{m}$. Note that
one has $\lambda^{-}_{m} = \frac{-m}{\lambda}$. For each $m\in\NN$,
there is a unique bi-infinite fixed point $w$ of $\varrho^{2}_{m}$
with legal seed $0|0$ around the reference point (or origin), and the
orbit closure of $w$ under the shift action defines the discrete (or
symbolic) hull $\XX_m$. Then, $(\XX_m , \ZZ)$ is a topological
dynamical system that is strictly ergodic by standard results; see
\cite{Q,TAO} and references therein for background and further details.

The Perron--Frobenius (PF) eigenvector of $M_m$, in
frequency-normalised form, is
\begin{equation}\label{eq:freq}
   v^{}_{\mathrm{PF}}\, = \, (\nu^{}_{0} , \nu^{}_{1} )^{T} \, = \,
   \tfrac{1}{\lambda} (1,\lambda \! - \! 1)^{T}  ,
\end{equation}
where the $\nu^{}_{i}$ are the relative frequencies of the two letters
in any element of the hull, $\XX_m$. Next, $(\lambda,1)$ is the
corresponding left eigenvector, which gives the interval lengths for
the corresponding geometric inflation rule.  Up to scale, this is the
unique choice to obtain a self-similar \emph{inflation tiling} of the
line from $\varrho^{}_{m}$; see \cite[Ch.~4]{TAO} for background. 
This version, where $0$ and $1$ stand for intervals of length $\lambda$
and $1$, is convenient because $\ZZ[\lambda]$ is then the natural
$\ZZ$-module to work with. The tiling hull $\YY_{\nts m}$ emerges from
the orbit closure of the tiling defined by $w$, now under the
continuous translation action of $\RR$. The topological dynamical
system $(\YY_{\nts m} , \RR)$ is again strictly ergodic, which can
be proved by a suspension argument \cite{EW}. The unique
invariant probability measure on $\YY_{\nts m}$ is the well-known
patch frequency measure of the inflation rule.

\subsection{Cases with pure point spectrum}

Let us begin with the analysis of the case $m=1$, which defines the
Fibonacci chain. Here, the following result is standard \cite{TAO,Q}.

\begin{fact}\label{fact:Fibo}
  For\/ $m=1$, our substitution defines the well-known Fibonacci chain
  or tiling system. Both dynamical systems, $(\XX^{}_{1}, \ZZ)$ and\/
  $(\YY^{}_{\! 1}, \RR)$, are known to have pure point diffraction and
  dynamical spectrum.  \qed
\end{fact}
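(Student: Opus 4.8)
The plan is to realise the Fibonacci point set as a regular model set and then to invoke the standard equivalence between pure point diffraction and pure point dynamical spectrum. For $m=1$, the inflation multiplier is $\lambda = \lambda^{+}_{1} = \tfrac{1}{2}(1+\sqrt{5}\,)$, the golden ratio, a quadratic PV \emph{unit} whose algebraic conjugate is $\lambda^{-}_{1} = -1/\lambda$ with $|\lambda^{-}_{1}| < 1$. First I would set up the cut-and-project scheme with physical space $\RR$, internal space $\RR$, and lattice $\{(x,x^{\star}) : x\in\ZZ[\lambda]\}$, where $x\mapsto x^{\star}$ is the Galois conjugation on $\ZZ[\lambda] = \ZZ[\sqrt{5}\,]$; this is exactly the natural module attached to the geometric inflation with interval lengths $\lambda$ and $1$.

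Next I would show, from the displacement structure of the inflation (equivalently, from the classical geometric renormalisation of the Fibonacci tiling), that the set $\Lambda_i$ of left endpoints of tiles of type $i$ has the form $\{x\in\ZZ[\lambda] : x^{\star}\in W_i\}$, where $W^{}_{0}$ and $W^{}_{1}$ are half-open intervals that together tile a fundamental interval for the projected lattice. One then checks the usual hypotheses: each $\Lambda_i$ is Delone in $\RR$, its star image is relatively dense and precompact, and the model set is regular and generic (each $\partial W_i$ is two points and carries no projected lattice point). By Schlottmann's theorem on regular model sets, each $\delta^{}_{\Lambda_i}$ has pure point diffraction, and the same then holds for the general weighted comb $\omega^{}_{u} = u^{}_{0}\,\delta^{}_{\Lambda_{0}} + u^{}_{1}\,\delta^{}_{\Lambda_{1}}$, so that $\widehat{\gamma^{}_{u}}$ is a pure point measure supported on the projection of the dual lattice. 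The arbitrary complex weights $u^{}_{0}, u^{}_{1}$ cause no trouble precisely because the weight is a piecewise constant function of the internal coordinate (constant on each window cell), which is the one point worth stating carefully; otherwise this step is entirely routine for the Fibonacci case.

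Finally, I would invoke the general equivalence theorem (Baake--Lenz; Lee--Moody--Solomyak; Gou\'{e}r\'{e}): for the uniquely ergodic Delone dynamical systems at hand, pure point diffraction for a full set of weights is equivalent to pure point dynamical spectrum, which upgrades the above to pure point dynamical spectrum for $(\YY^{}_{\!1},\RR)$ and, via the suspension relation \cite{EW}, for $(\XX^{}_{1},\ZZ)$ as well. All of this is carried out in detail in \cite{TAO,Q}, so in the write-up one may simply quote those references. The only genuine work is the explicit determination of $W^{}_{0}, W^{}_{1}$ and the verification of regularity and genericity, and this is classical; a conceptually cleaner alternative is to quote the resolved Pisot substitution conjecture for two-letter substitutions, which yields pure point dynamical spectrum directly, and then to use the equivalence in the reverse direction to obtain pure point diffraction.
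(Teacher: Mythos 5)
Your proposal is correct and follows essentially the same route as the paper, which offers no proof of its own but delegates exactly this classical argument to \cite{TAO,Q}: the Fibonacci left-endpoint sets are regular model sets with half-open interval windows in a cut-and-project scheme built from Galois conjugation on $\ZZ[\lambda]$, pure point diffraction follows from the model-set (Schlottmann) theorem, and the Lee--Moody--Solomyak/Baake--Lenz equivalence upgrades this to pure point dynamical spectrum. The only small slip is the identification $\ZZ[\lambda]=\ZZ[\sqrt{5}\,]$ --- in fact $\ZZ[\lambda]$ with $\lambda=\frac{1}{2}(1+\sqrt{5}\,)$ is the full ring of integers of $\mathbb{Q}(\sqrt{5}\,)$ and contains $\ZZ[\sqrt{5}\,]$ with index $2$ --- but this does not affect the argument, which correctly uses $\ZZ[\lambda]$ itself as the underlying module.
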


Let us thus analyse the systems for $m>1$, where we begin with an easy
observation.

\begin{fact}\label{fact:integer}
  The inflation multiplier\/ $\lambda = \lambda^{+}_{m}$ is an integer
  if and only if\/ $m=\ell(\ell+1)$ with $\ell\in\mathbb{N}$, where\/
  $\lambda^{+}_{m}=\ell+1$ and\/ $\lambda^{-}_{m}=-\ell$.  In all
  remaining cases with $m>1$, the inflation multiplier fails to be a
  PV number.  \qed
\end{fact}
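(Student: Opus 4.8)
The plan is to establish the two assertions separately, both by elementary arithmetic with the characteristic polynomial $x^2 - x - m$ and its roots $\lambda^{\pm}_{m} = \tfrac{1}{2}\bigl(1 \pm \sqrt{4m+1}\,\bigr)$.

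First I would settle the integrality criterion. Since $\lambda^{+}_{m} + \lambda^{-}_{m} = 1$ and $\lambda^{+}_{m}\lambda^{-}_{m} = -m$, both roots are algebraic integers for every $m\in\NN$, and $\lambda^{+}_{m}$ is a rational integer exactly when the discriminant $4m+1$ is a perfect square. As $4m+1$ is odd, its square root, if an integer, must itself be odd, say $2\ell+1$ with $\ell\in\NN$; then $4m+1 = (2\ell+1)^2$ unfolds to $m = \ell(\ell+1)$, and conversely this value of $m$ makes $4m+1$ a square. Substituting back gives $\lambda^{+}_{m} = \ell+1$ and $\lambda^{-}_{m} = 1 - (\ell+1) = -\ell$ (equivalently $\lambda^{-}_{m} = -m/\lambda^{+}_{m} = -\ell$), as claimed.

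Second, the PV statement. Recall that a PV number is a real algebraic integer $>1$ all of whose Galois conjugates other than itself lie in the open unit disc, and note that $\lambda^{+}_{m} > 1$ for all $m\geq 1$ (since $\sqrt{4m+1}>2$). Assume now $m>1$ and $m\neq\ell(\ell+1)$ for every $\ell\in\NN$. Then $4m+1$ is not a perfect square, so $\lambda^{+}_{m}$ is a quadratic irrational whose minimal polynomial over $\mathbb{Q}$ is $x^2 - x - m$, and whose unique conjugate is $\lambda^{-}_{m} = -m/\lambda^{+}_{m}$. Because $\lambda^{+}_{m} > 0$, the Pisot condition $|\lambda^{-}_{m}| < 1$ is equivalent to $m < \lambda^{+}_{m}$, hence to $2m-1 < \sqrt{4m+1}$; squaring (legitimate since $2m-1\geq 1$) reduces this to $4m(m-2) < 0$, i.e.\ to $m < 2$, contradicting $m>1$. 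Thus $|\lambda^{-}_{m}| \geq 1$ in all these cases, and in fact the inequality is strict, since $|\lambda^{-}_{m}| = 1$ would force $m=2$, which is of the excluded form $\ell(\ell+1)$ with $\ell=1$. Hence $\lambda^{+}_{m}$ is not a PV number.

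I do not anticipate any real obstacle here: the statement is pure elementary number theory. The only points that call for a little care are the parity observation (an odd perfect square has an odd square root) and the verification that both sides are non-negative before squaring in the Pisot inequality.
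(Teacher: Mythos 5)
Your proof is correct and complete; the paper states this as a Fact with no written proof, and your elementary argument (discriminant $4m+1$ a perfect square iff $m=\ell(\ell+1)$, and $\lvert\lambda^{-}_{m}\rvert = m/\lambda^{+}_{m}\geqslant 1$ precisely for $m\geqslant 2$) is exactly the routine verification the authors leave to the reader. The care you take with the parity of the square root and with the sign check before squaring is appropriate, and your observation that the borderline case $\lvert\lambda^{-}_{m}\rvert=1$ occurs only at $m=2=1\cdot 2$, which is already excluded, correctly closes the argument.
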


Let us take a closer look at the cases where $\lambda$ is an integer,
where we employ the concept of \emph{mutual local derivability} (MLD)
from \cite{TAO}. This can be viewed as the natural extension of
conjugacy via sliding block maps from symbolic dynamics to tiling
dynamics.

\begin{prop}\label{prop:MLD}
  When\/ $m=\ell(\ell+1)$ with $\ell\in\mathbb{N}$, the inflation
  tiling hull\/ $\YY^{}_{\nts m}$ defined by\/ $\varrho^{}_{m}$ is MLD
  with another inflation tiling hull that is generated by the binary
  constant length substitution\/ $\tilde{\varrho}^{}_{m}$, defined
  by\/ $a \mapsto a b^{\ell}$, $b \mapsto a^{\ell+1}$, under the
  identifications\/ $a\,\widehat{=}\, 0$ and\/
  $b\,\widehat{=}\,1^{\ell+1}$.
  
  Consequently, for any such\/ $m$, the dynamical system\/ 
  $(\YY^{}_{\nts m}, \RR)$ has pure point spectrum, both in
  the dynamical and in the diffraction sense.
  \end{prop}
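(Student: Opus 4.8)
The plan is to establish the MLD relationship first and then invoke a known transfer principle for spectral type under MLD.

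\medskip

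\textbf{Step 1: Constructing the recoding.}
First I would show that, under the geometric realisation with intervals of length $\lambda = \ell+1$ for the symbol $0$ and length $1$ for the symbol $1$, every occurrence of $1$ in a fixed point $w$ of $\varrho^{}_{m}$ actually comes in a block of exactly $\ell+1$ consecutive $1$'s. The reason is combinatorial: the letter $1$ is produced only via the image $0 \mapsto 0 1^m$ with $m = \ell(\ell+1)$, and the only letter preceding such a block is the trailing letter of a previous $\varrho^{}_{m}$-image, which is again the result of an application of $\varrho^{}_{m}$ to some letter. A short induction on the inflation level shows that the maximal runs of $1$'s all have length divisible by $\ell+1$; combined with the fact that $0 \mapsto 0 1^{\ell(\ell+1)}$ contributes a run of exactly $\ell(\ell+1)$ ones, one checks the runs have length exactly $\ell+1$ or $\ell(\ell+1)$, and in fact one verifies that the ``long'' run $1^{\ell(\ell+1)}$ only arises as $\ell$ adjacent copies of the block $1^{\ell+1}$. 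Hence the substitution on the coarser alphabet, obtained by replacing each maximal block $1^{\ell+1}$ by a new letter $b$ and keeping $0 = a$, is well defined and sends $a = 0 \mapsto 01^{\ell(\ell+1)} = a b^{\ell}$ and $b = 1^{\ell+1} \mapsto 0^{\ell+1} = a^{\ell+1}$, which is exactly $\tilde\varrho^{}_{m}$. This recoding is clearly a local (sliding-block-type) map in both directions at the level of tilings: from a $\tilde\varrho^{}_{m}$-tiling one recovers the $\varrho^{}_{m}$-tiling by expanding each $b$-tile back into $\ell+1$ unit tiles, and conversely the decomposition into maximal $1$-runs is determined by any bounded window. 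This gives the MLD claim.

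\medskip

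\textbf{Step 2: Constant length and pure pointedness of $\tilde\varrho^{}_{m}$.}
Next I would observe that $\tilde\varrho^{}_{m}$ is a primitive substitution of constant length $\ell+1$ (the image of each letter has length $\ell+1$ as a word, and in the natural geometric realisation all its tiles have equal length), hence its associated tiling dynamical system is pure point diffractive precisely when the substitution has trivial height and the associated cyclic/coincidence structure forces it, or — more directly — one can appeal to Dekking's criterion: a primitive constant-length substitution has pure point dynamical spectrum if and only if it admits a coincidence (after removing the height). Here a coincidence is immediate, since both $\tilde\varrho^{}_{m}(a) = ab^{\ell}$ and $\tilde\varrho^{}_{m}(b) = a^{\ell+1}$ begin with the letter $a$, so there is a coincidence already in the first column; the height is $1$ because $a$ maps to a word starting with $a$ and the return-word structure generates all of $\ZZ$. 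Therefore $(\YY(\tilde\varrho^{}_{m}), \RR)$, equivalently the symbolic system with its suspension, has pure point dynamical spectrum, and by the equivalence of pure point dynamical and diffraction spectrum (valid in this setting, cf.\ the references in the excerpt) also pure point diffraction.

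\medskip

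\textbf{Step 3: Transfer along MLD.}
Finally I would invoke the fact that MLD is a spectral invariant: two MLD systems have topologically conjugate dynamical systems (the MLD maps are the conjugating homeomorphisms commuting with the $\RR$-actions up to the rescaling of lengths, which is an immaterial overall time-change), so pure pointedness of the dynamical spectrum transfers from $(\YY(\tilde\varrho^{}_{m}), \RR)$ to $(\YY^{}_{\nts m}, \RR)$. For the diffraction side, one uses that for systems with pure point dynamical spectrum the diffraction is pure point regardless of the chosen (nontrivial) weights, which is the standard consequence of the Halmos--von Neumann type classification together with the fact that pure point dynamical spectrum implies every translation-bounded measure factor has pure point diffraction; see \cite{TAO} for this circle of ideas. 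This yields case (2) of Theorem~\ref{thm:main-goal}.

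\medskip

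The main obstacle is Step 1: one must argue carefully that the maximal runs of $1$'s really do organise into blocks of length exactly $\ell+1$ so that the coarsening map is genuinely local and bijective on hulls. The cleanest way is probably to exhibit the decomposition at the level of the inflation's fixed point directly and then note that the inflation structure propagates it to every tiling in the hull; once the block structure is in place, the rest is bookkeeping and a citation.
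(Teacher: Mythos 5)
Your proposal is correct and follows essentially the same route as the paper: recode the runs of $1$'s into blocks $1^{\ell+1}\,\widehat{=}\,b$ to get local derivations in both directions, apply Dekking's coincidence criterion to the constant-length substitution $\tilde{\varrho}^{}_{m}$, and transfer pure pointedness via the (constant-roof) suspension, conjugacy, and the dynamical--diffraction equivalence. One small correction to Step 1: every \emph{maximal} run of $1$'s in the fixed point has length exactly $\ell(\ell+1)$ (a run of $1$'s arises only from the image of a single $0$, and every letter's image begins with $0$, so there are no maximal runs of length $\ell+1$), but your decomposition of each such run into $\ell$ adjacent blocks $1^{\ell+1}$ is precisely the paper's local rule sending $1^{\ell(\ell+1)}$ to $b^{\ell}$.
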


\begin{proof}
  The claim can be proved by comparing the two-sided fixed point $w$
  of $\varrho^{\ts 2}_{m}$, with seed $0|0$, with that of
  $\tilde{\varrho}^{\ts 2}_{m}$, with matching seed $a|a$, called $u$
  say, where we employ the tiling picture and assume that the letters
  $a$ and $b$ both stand for intervals of length
  $\lambda=\ell+1$. Clearly, the local mapping defined by $a\mapsto 0$
  and $b\mapsto 1^{\ell+1}$ sends $u$ to $w$. For the other direction,
  each $0$ is mapped to $a$, while the symbol $1$ in $w$ occurs in
  blocks of length $\ell (\ell+1)$, which are locally
  recognisable. Any such block is then replaced by $b^\ell$, and this
  defines a local mapping that sends $w$ to $u$. The transfer from the
  symbolic fixed points to the corresponding tilings is consistent, as
  the interval lengths match the geometric constraints.  The extension
  to the entire hulls is standard.
   
  The constant length substitution $\tilde{\varrho}^{}_{m}$ has a
  coincidence in the first position, and thus defines a discrete
  dynamical system with pure point dynamical spectrum by Dekking's
  theorem \cite{Dekking}. Due to the constant length nature,
  $\tilde{\YY}^{}_{\nts m}$ emerges from $\tilde{\XX}^{}_{m}$ by a
  simple suspension with a constant roof function \cite{EW}, so that
  the dynamical spectrum of $(\tilde{\YY}^{}_{\nts m}, \RR)$, and 
  hence that of   $(\YY^{}_{\nts m}, \RR)$ by conjugacy, is still pure
  point.  By the equivalence theorem between dynamical and diffraction
  spectra \cite{LMS,BL} in the pure point case, the last claim is
  clear.
\end{proof}

Let us mention in passing that all eigenfunctions are continuous for
primitive inflation rules \cite{Q,Boris}.  For the systems considered
in this paper, all eigenvalues are thus topological.  So far, we have
the following result.

\begin{theorem}\label{thm1}
  Consider the inflation tiling, with prototiles of natural length,
  defined by\/ $\varrho^{}_{m}$.  For\/ $m=1$ and\/ $m=\ell (\ell+1)$
  with\/ $\ell \in \mathbb{N}$, the tiling has pure point diffraction,
  which can be calculated with the projection method.\footnote{For
  $m\ne 1$, this works analogously to the case of the period doubling 
  sequence; compare  \cite{TAO}.}  The corresponding tiling
  dynamical system\/ $(\YY_{\nts m}, \RR)$ is strictly ergodic and has
  pure point dynamical spectrum.  \qed
\end{theorem}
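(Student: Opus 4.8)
The plan is to treat Theorem~\ref{thm1} as a consolidation of the results already obtained, the only genuinely new ingredient being the cut-and-project realisation asserted in the last sentence and its footnote.

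First I would dispatch the spectral claims. For $m=1$, Fact~\ref{fact:Fibo} already states that the Fibonacci tiling has pure point diffraction and pure point dynamical spectrum. For $m=\ell(\ell+1)$, Proposition~\ref{prop:MLD} gives an MLD equivalence of $(\YY_{\nts m},\RR)$ with the tiling hull of the constant-length substitution $\tilde{\varrho}^{}_{m}$, which has a coincidence in the first position; Dekking's theorem \cite{Dekking} then yields pure point dynamical spectrum, and the equivalence theorem of \cite{LMS,BL} upgrades this to pure point diffraction. Strict ergodicity of $(\YY_{\nts m},\RR)$ was already established in Section~\ref{sec:general} for every $m\in\NN$, via the suspension argument of \cite{EW}, so in particular in the two cases at hand. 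At this stage everything except the projection-method statement is in hand.

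Next I would supply the model-set description. For $m=1$ this is classical: one uses the cut-and-project scheme with physical and internal space $\RR$, lattice $\{(x,x^{\star}) : x\in\ZZ[\lambda]\}$ where the star map is the algebraic conjugation $\lambda\mapsto\lambda^{-}_{1}$, and the left-endpoint set is a regular model set whose window is a half-open interval (split into two subintervals according to tile type); the standard model-set diffraction formula then produces the pure point spectrum explicitly, as in \cite{TAO}. For $m=\ell(\ell+1)$ the inflation multiplier $\lambda=\ell+1$ is an integer, so the real Galois conjugate is trivial and one must instead build the internal space from the constant-length structure of Proposition~\ref{prop:MLD}. Here $\tilde{\varrho}^{}_{m}$ has constant length $q=\ell+1$ and a first-position coincidence, so by Dekking's structure theory it is a regular model set whose internal space is the base-$q$ odometer, that is, the inverse limit of the groups $\ZZ/q^{n}\ZZ$, with clopen windows read off from the coincidence; this is precisely the mechanism behind the period-doubling sequence to which the footnote alludes. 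Transporting this scheme back through the MLD recoding, which merely rescales and regroups the tiles (contracting each block $1^{\ell+1}$ to a single interval of length $\ell+1$), then yields the desired cut-and-project presentation of the original tiling, from which one reads off its Bragg spectrum explicitly.

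The hard part, such as it is, will be making the integer-$\lambda$ realisation precise: identifying the correct odometer internal group, checking that the windows are clopen with Haar-null boundary so that the model set is regular and the diffraction formula applies, and verifying that the MLD recoding preserves this regularity. None of this is conceptually new --- it follows the period-doubling template of \cite{TAO} once the dictionary of Proposition~\ref{prop:MLD} is fixed --- so beyond that the argument is routine bookkeeping.
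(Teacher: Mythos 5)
Your proposal matches the paper's (implicit) proof: the theorem carries its \qed in the statement precisely because it is a consolidation of Fact~\ref{fact:Fibo}, Fact~\ref{fact:integer} and Proposition~\ref{prop:MLD} together with the strict ergodicity already noted in Section~\ref{sec:general}, which is exactly how you dispatch the spectral claims. Your additional sketch of the cut-and-project realisation (algebraic conjugation for $m=1$, the $(\ell+1)$-adic odometer as internal space for $m=\ell(\ell+1)$ transported through the MLD recoding) goes beyond what the paper writes down --- it only gestures at this in the footnote via the period-doubling analogy --- but it is the intended construction and is consistent with the cited background.
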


\subsection{Non-PV cases}

In all remaining cases, meaning those that are not covered by
Theorem~\ref{thm1}, the PF eigenvalue is irrational, but fails to be a
PV number. None of the corresponding tilings can have non-trivial
point spectrum \cite{Boris,BFGR}.  In particular, the only Bragg peak
in the diffraction measure is the trivial one at $k=0$. If we consider
Dirac combs with point measures at the left endpoints of the
intervals, which leads to the Dirac comb of Eq.~\eqref{eq:def-comb}
below, this Bragg peak has intensity
\begin{equation}\label{eq:0-Bragg}
  I^{}_{0} \, = \, \big| \nu^{}_{0} \ts u^{}_{0} + 
   \nu^{}_{1} u^{}_{1} \big|^{2} ,
\end{equation}
where $u^{}_{0}$, $u^{}_{1}$ are the (possibly complex) weights for
the two types of points, and $\nu^{}_{0}$, $\nu^{}_{1}$ are the
frequencies from Eq.~\eqref{eq:freq}; compare \cite[Prop.~9.2]{TAO}.
This gives the first part of the following result, the full proof of which will 
later follow from Lemma~\ref{lem:cond-no-ac} and 
Proposition~\ref{prop:bounded-away}.

\begin{theorem}\label{thm1b}
  For all cases of our inflation family that remain after 
  Theorem~$\ref{thm1}$, the pure point part of the diffraction
  consists of the trivial Bragg peak at\/ $0$, with intensity\/
  $I^{}_{0}$ according to Eq.~\eqref{eq:0-Bragg}, while the remainder
  of the diffraction is purely singular continuous.  
\end{theorem}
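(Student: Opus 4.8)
For the first part, recall that in the non-PV cases the absence of non-trivial point spectrum is already known from \cite{Boris,BFGR}, so the only Bragg peak sits at $k=0$, with intensity $I^{}_{0}$ from Eq.~\eqref{eq:0-Bragg} by the standard formula \cite[Prop.~9.2]{TAO}. Writing the Lebesgue decomposition $\widehat{\gamma^{}_{u}} = I^{}_{0}\ts\delta_{0} + (\widehat{\gamma^{}_{u}})_{\mathrm{ac}} + (\widehat{\gamma^{}_{u}})_{\mathrm{sc}}$, the task reduces to showing $(\widehat{\gamma^{}_{u}})_{\mathrm{ac}}=0$. The singular continuous part is then genuinely present, since the same non-PV property prevents $(\YY^{}_{\nts m},\RR)$ from being pure point \cite{Boris,BFGR}, so that $\widehat{\gamma^{}_{u}}$ cannot reduce to the single peak $I^{}_{0}\ts\delta_{0}$.

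The plan is to feed the exact renormalisation relations for the pair correlations from Section~\ref{sec:pair} into the Lyapunov-exponent analysis of Section~\ref{sec:Lyapunov}. The displacement structure of $\varrho^{}_{m}$ assembles into the inflation Fourier matrix
\[
  B(k)\,=\,\begin{pmatrix} 1 & \ee^{2\pi\ii k\lambda}\,\dfrac{\ee^{2\pi\ii k m}-1}
     {\ee^{2\pi\ii k}-1} \\[8pt] 1 & 0\end{pmatrix},
\]
whose $(i,j)$-entry collects $\ee^{2\pi\ii k x}$ over the tiles of type $j$ in the inflation of a tile of type $i$, placed at their relative positions $x$; in Fourier space the pair correlations are then governed by the cocycle generated by $B(k)$ (equivalently, by the symmetric part of $B(k)\otimes\overline{B(k)}$) over the expansive map $k\mapsto\lambda k$, with iterates $B^{(n)}(k)=B(\lambda^{n-1}k)\cdots B(\lambda k)\,B(k)$. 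The structural input, which is the content of Lemma~\ref{lem:cond-no-ac}, is a dichotomy: were $(\widehat{\gamma^{}_{u}})_{\mathrm{ac}}$ non-zero, its vector of Radon--Nikodym densities would have to obey the iterated renormalisation relation, and an $L^{1}$ function can do so only if the extremal Lyapunov exponent $\chi_{B}$ of this cocycle attains the critical value $\tfrac12\log\lambda$; conversely, a strict inequality $\chi_{B}<\tfrac12\log\lambda$ forces $(\widehat{\gamma^{}_{u}})_{\mathrm{ac}}=0$. This is the mechanism already exploited in \cite{BG15,Neil,BFGR}.

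It remains to verify $\chi_{B}<\tfrac12\log\lambda$, and with a quantitative margin, which is Proposition~\ref{prop:bounded-away}. The ingredients I would assemble are: (i) the entries of $B(k)$ are trigonometric polynomials --- the type-$1$ block being a Dirichlet kernel --- so $\log\|B(k)\|\in L^{1}$ and $\chi_{B}$ is well defined, as an a.e. limit by the Furstenberg--Kesten and Oseledec theorems, or in averaged form by subadditivity; (ii) an explicit handle on the averaged growth rate of $B^{(n)}(k)$ through the logarithmic Mahler measure of a derived family of polynomials $P^{}_{m}$, the family that is analysed further in the Appendix --- and here the non-PV hypothesis enters decisively: since the multiplier is non-PV, $P^{}_{m}$ is not, up to a monomial, a product of cyclotomic polynomials, so its Mahler measure exceeds $1$ by Kronecker's theorem, which is exactly what opens the strict gap relative to $\tfrac12\log\lambda$; (iii) the observation that the crude estimate $\chi_{B}\le\log\max_{k}\|B(k)\|$ is worthless, since $\|B(k)\|$ attains values of order $m$ while $\tfrac12\log\lambda\sim\tfrac14\log m$, so one genuinely has to average and to check that the small set on which the Dirichlet kernel is large does not spoil the comparison with the (small) threshold. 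Combining these gives $\chi_{B}$ strictly --- and, as it turns out, uniformly in $m$ --- below $\tfrac12\log\lambda$, and Lemma~\ref{lem:cond-no-ac} then yields $(\widehat{\gamma^{}_{u}})_{\mathrm{ac}}=0$, completing the proof.

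The main obstacle is Proposition~\ref{prop:bounded-away} itself: the strict Lyapunov bound, with a margin that does not degenerate as $m\to\infty$. Two points require care. First, identifying the correct polynomial family $P^{}_{m}$ feeding into the exponent and evaluating, or at least lower-bounding, its Mahler measure; here one wants to sidestep Lehmer's problem, which is feasible precisely because $P^{}_{m}$ is given explicitly rather than as an abstract family of bounded measure. Second, controlling the logarithmic average of $\|B^{(n)}(k)\|$ near the zeros of the denominator $\ee^{2\pi\ii k}-1$, where $B(k)$ is large, and showing that these neighbourhoods contribute only $o(n\log\lambda)$ along the orbit under $k\mapsto\lambda k$, so that they do not interfere with the bound.
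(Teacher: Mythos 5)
Your overall skeleton matches the paper's: the first part is exactly the paper's argument (the non-PV property excludes all non-trivial Bragg peaks, and the intensity of the trivial one comes from \cite[Prop.~9.2]{TAO}), and the reduction of the second part to Lemma~\ref{lem:cond-no-ac} plus Proposition~\ref{prop:bounded-away} is also how the paper proceeds. But the mechanism you propose for the decisive step, Proposition~\ref{prop:bounded-away}, runs in the wrong direction, and this is a genuine gap. What has to be shown is the \emph{upper} bound $\chi^{B}(k)\leqslant\log\sqrt{\lambda}-c$, and the sufficient criterion the paper uses is $\log(\lambda)>\frac{1}{N}\,\MM\bigl(\log\|B^{(N)}(.)\|_{\mathrm{F}}^{2}\bigr)$, i.e.\ an \emph{upper} bound on a logarithmic Mahler measure: for $N=1$ this is $\fm(q)$ with $q(z)=2z^{m-1}+(1+z+\cdots+z^{m-1})^{2}$, and the Appendix proves $\fm(q)<\log\sqrt{46}$ via Jensen's inequality and a Parseval ($L^{2}$ coefficient) bound, which settles $m\geqslant 40$; refinements and numerics with $N$ up to $6$ (Table~\ref{tab:num}) cover the remaining $m$. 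Your step (ii) instead invokes Kronecker's theorem to conclude that the Mahler measure of the relevant polynomial \emph{exceeds} $1$ because it is not cyclotomic --- a lower bound, which gives nothing in the direction needed and, if anything, works against you. Relatedly, the non-PV hypothesis plays no role whatsoever in the Lyapunov estimate: Proposition~\ref{prop:bounded-away} holds for \emph{all} $m$, including $m=1$ and $m=\ell(\ell+1)$; non-PV enters only in the first part, to exclude non-trivial point spectrum.

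Two smaller points. The exponents cannot be obtained from Oseledec or Furstenberg--Kesten directly, because for irrational $\lambda$ the map $k\mapsto\lambda k$ is not a measure-preserving transformation of $\TT$ (nor of $\RR$ with Lebesgue); the paper works with $\limsup$-defined exponents and controls Birkhoff-type averages along $(\lambda^{n}k)$ via uniform distribution, discrepancy estimates and Sobol's theorem, which is where the doubly periodic lift $\tilde{B}(x,y)$ is needed. And the place where care is required is not where $\|B(k)\|$ is large --- $p$ is a bounded trigonometric polynomial, so the Dirichlet-kernel denominator is harmless --- but where $\det B(k)=-p(k)$ \emph{vanishes}, namely at $k\in\frac{1}{m}\ZZ\setminus\ZZ$, since $\log\lvert\det B\rvert$ has $-\infty$ singularities there; this is what Proposition~\ref{prop:det-lim} handles so that the relation $\chi_{\max}+\chi_{\min}=\log\lambda$ of Lemma~\ref{lem:expo-sum} becomes available.
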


The first example in our family with continuous spectral component is
$m=3$, where the eigenvalues are
$\frac{1}{2}\bigl(1 \nts \pm \nts \sqrt{13}\,\bigr)$. This case was
studied in detail in \cite{BFGR}, where also general methods were
developed that can be used for the entire family, as we shall
demonstrate below.

\subsection{Some notation}

To continue, we need various standard results from the theory of
unbounded (but translation bounded) measures on $\RR$, for instance as
summarised in \cite[Ch.~8]{TAO}. In particular, we use $\delta_x$ to
denote the normalised Dirac measure at $x$ and
$\delta^{}_S = \sum_{x\in S}^{} \delta_{x}$ for the Dirac comb of a
discrete point set $S$. A translation bounded measure $\mu$ is simultaneously
considered as a regular Borel measure (then evaluated on bounded Borel
sets) and as a Radon measure (hence as a linear functional on
$C_{\mathsf{c}} (\RR)$, the space of continuous functions with compact
support), which is justified by the general Riesz--Markov
representation theorem; compare \cite[Ch.~4]{S}.

For a continuous function $g$, the measure $g.\mu$ is defined by
$\mu \circ g^{-1}$ as a Borel measure, while $g(.) \mu$ stands for the
measure that is absolutely continuous relative to $\mu$ with
Radon--Nikodym density $g$. The function $\widetilde{g}$ is specified
by $\widetilde{g} (x) = \overline{g(-x)}$, which extends to Radon
measures by $\widetilde{\mu} (g) = \overline{\mu (\widetilde{g} )}$;
for further details, we refer to \cite{TAO}.

\section{Displacement structure and pair 
correlations}\label{sec:pair}

Let $m\in \NN$ be arbitrary, but fixed, which will be suppressed in
our notation from now on whenever reasonable. We will now
review some properties of the inflation structure and how this
can be used to get exact renormalisation relations for the
pair correlation functions.

\subsection{Displacements and their algebraic structure}

First, we quantify the relative displacements of tiles in the
inflation process by the set-valued matrix
\[
     T \, = \, \begin{pmatrix}  \{ 0 \} & \{ 0 \} \\
     S &  \varnothing \end{pmatrix},
     \quad \text{with } S := \{ \lambda, \lambda +  1, 
     \ldots , \lambda + m \! - \! 1\} \ts ,
\]
where $T = (T^{}_{ij})^{}_{0 \leqslant i,j \leqslant 1}$ with
$T^{}_{ij}$ being the set of relative positions of tiles (intervals)
of type $i$ in supertiles of type $j$. Here and below, the positions
are always determined between the left endpoints of the tiles as
markers.

{}From the measure matrix
$\delta^{}_{T} := \bigl(\delta^{}_{T_{ij}} \bigr)_{0 \leqslant i,j
  \leqslant 1}$,
with $\,\widehat{.}\,$ denoting the standard Fourier transform as used
in \cite[Ch.~8]{TAO}, one obtains the Fourier matrix $B$ of our
inflation system as
\begin{equation}\label{eq:F-mat}
    B (k) \, := \, \overline{\widehat{\delta^{}_{T}} (k)} \, = \,
    D^{}_{0} +  p(k) \, D^{}_{\lambda}
\end{equation}
with the trigonometric polynomial
\begin{equation}\label{eq:p-def}
    p(k) \, = \, z^{\lambda} ( 1 + z + \ldots + z^{m-1})
    \big| _{z = \ee^{2 \pi \ii k}}
\end{equation}
and digit matrices $D_0 = \left( \begin{smallmatrix} 1 & 1 \\ 0 & 0
  \end{smallmatrix} \right)$
and $D_\lambda = \left( \begin{smallmatrix} 0 & 0 \\ 1 &
    0 \end{smallmatrix} \right)$,
which are the same matrices for all $m \in \NN$. The complex algebra
$\cB$ generated by them is the \emph{inflation displacement algebra}
(IDA) introduced in \cite{BG15}.  Invoking \cite{BFGR}, one has the
following result.

\begin{fact}\label{fact:IDA}
  For any\/ $m\in\NN$, the IDA\/ $\cB$ of the inflation defined by\/
  $\varrho^{}_{m}$, with intervals of natural length as prototiles, is
  the full matrix algebra, $\Mat(2,\CC)$.  This is also the IDA for
  all powers of the inflation. \qed
\end{fact}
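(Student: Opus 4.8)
The plan is to verify the claim directly by computing the algebra generated by the two digit matrices $D^{}_{0} = \left(\begin{smallmatrix} 1 & 1 \\ 0 & 0 \end{smallmatrix}\right)$ and $D^{}_{\lambda} = \left(\begin{smallmatrix} 0 & 0 \\ 1 & 0 \end{smallmatrix}\right)$, and then observing that the same conclusion transfers to all powers of the inflation. First I would note that $\cB$ is the smallest subalgebra of $\Mat(2,\CC)$ (with the identity, as it arises naturally in the renormalisation setting, or one checks the identity is generated anyway) containing $D^{}_{0}$ and $D^{}_{\lambda}$. Since $\dim_{\CC} \Mat(2,\CC) = 4$, it suffices to exhibit four linearly independent elements of $\cB$. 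A convenient choice: $D^{}_{\lambda} = E^{}_{21}$ is already a matrix unit; then $D^{}_{0} D^{}_{\lambda} = \left(\begin{smallmatrix} 1 & 0 \\ 0 & 0 \end{smallmatrix}\right) = E^{}_{11}$, and $D^{}_{\lambda} D^{}_{0} = \left(\begin{smallmatrix} 0 & 0 \\ 1 & 1 \end{smallmatrix}\right)$, whence $D^{}_{\lambda} D^{}_{0} - D^{}_{\lambda} = \left(\begin{smallmatrix} 0 & 0 \\ 0 & 1 \end{smallmatrix}\right) = E^{}_{22}$. Finally $D^{}_{0} - E^{}_{11} = \left(\begin{smallmatrix} 0 & 1 \\ 0 & 0 \end{smallmatrix}\right) = E^{}_{12}$. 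The four matrix units $E^{}_{11}, E^{}_{12}, E^{}_{21}, E^{}_{22}$ are manifestly linearly independent and span $\Mat(2,\CC)$, so $\cB = \Mat(2,\CC)$. Note that this computation is independent of $m$, because $D^{}_{0}$ and $D^{}_{\lambda}$ do not depend on $m$ (only the scalar polynomial $p(k)$ in~\eqref{eq:F-mat} does), which is exactly the content of the statement that these are "the same matrices for all $m\in\NN$."

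For the second assertion, I would argue that the IDA of the $r$-th power $\varrho^{\ts r}_{m}$ contains the IDA of $\varrho^{}_{m}$. Indeed, the Fourier matrix of the iterated inflation is obtained by composing the one-step Fourier matrices along the substitution, so the digit matrices of $\varrho^{\ts r}_{m}$ are finite sums of products of the digit matrices $D^{}_{0}, D^{}_{\lambda}$ of $\varrho^{}_{m}$ weighted by monomials $z^{j}$; evaluating at suitable $k$ (or using that the IDA is by definition generated by all such matrix coefficients) shows every generator of $\cB(\varrho^{}_{m})$ lies in $\cB(\varrho^{\ts r}_{m})$. Since $\cB(\varrho^{}_{m}) = \Mat(2,\CC)$ already and $\cB(\varrho^{\ts r}_{m}) \subseteq \Mat(2,\CC)$ trivially, equality follows. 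Alternatively, one invokes the general result of \cite{BFGR} that the IDA is stable under taking powers of a primitive inflation, which is precisely how the statement is phrased.

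The only genuine subtlety — and the place where one must be slightly careful rather than merely compute — is making the notion of "IDA of a power" precise enough that the inclusion $\cB(\varrho^{}_{m}) \subseteq \cB(\varrho^{\ts r}_{m})$ is transparent, since a priori the displacement sets for $\varrho^{\ts 2}_{m}$ look different ($S$ is replaced by a larger set coming from the nested inflation). This is handled cleanly by the observation that the digit matrices are insensitive to $m$ and that the two-step Fourier matrix factors through the one-step one; the reference to \cite{BFGR} supplies the general framework, so in the write-up I would keep the explicit four-matrix-unit computation for the first claim and cite \cite{BFGR} for the stability under powers, exactly as the excerpt already does.
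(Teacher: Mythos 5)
Your explicit computation for the one-step IDA is correct and complete: $D^{}_{\lambda}=E^{}_{21}$, $D^{}_{0}D^{}_{\lambda}=E^{}_{11}$, $D^{}_{\lambda}D^{}_{0}-D^{}_{\lambda}=E^{}_{22}$ and $D^{}_{0}-E^{}_{11}=E^{}_{12}$ do produce all four matrix units, so the algebra generated by the digit matrices is $\Mat(2,\CC)$, uniformly in $m$. Note that the paper does not prove this statement at all --- it is recorded as a Fact and attributed to \cite{BFGR} --- so for the first claim your argument is more self-contained than what is on the page, and it is exactly the kind of computation the citation stands in for. (If one prefers to define the IDA as generated by the family $B(k)$, $k\in\RR$, rather than by $D^{}_{0}$ and $D^{}_{\lambda}$ directly, the two definitions coincide here because $p$ is non-constant: two values $k^{}_{1},k^{}_{2}$ with $p(k^{}_{1})\ne p(k^{}_{2})$ recover $D^{}_{\lambda}$ and then $D^{}_{0}$ by linear combinations.)

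The soft spot is the statement about powers. You assert the inclusion $\cB(\varrho^{}_{m})\subseteq\cB(\varrho^{\ts r}_{m})$ and justify it by expanding the $r$-step Fourier matrix in one-step data. But that expansion writes each digit matrix of $\varrho^{\ts r}_{m}$ as a sum of products $D^{}_{d_0}\cdots D^{}_{d_{r-1}}$ of one-step digit matrices, which yields the \emph{opposite} inclusion $\cB(\varrho^{\ts r}_{m})\subseteq\cB(\varrho^{}_{m})$ for free --- and that is useless for fullness, since a subalgebra of $\Mat(2,\CC)$ need not be all of it. ``Evaluating at suitable $k$'' only recovers the $r$-step digit matrices from $\{B^{(r)}(k)\}$, not the one-step ones. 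What you actually need is to exhibit enough digit matrices of $\varrho^{\ts r}_{m}$ itself, and this is easy: since all displacements are non-negative, the only chain with total displacement $0$ is $d^{}_{0}=\cdots=d^{}_{r-1}=0$, so the digit matrix of $\varrho^{\ts r}_{m}$ at $0$ is $D_{0}^{\ts r}=D^{}_{0}$; and, for instance for $r=2$, reading off $\varrho^{2}(0)=01^{m}0^{m}$ and $\varrho^{2}(1)=01^{m}$ shows that the digit matrices at displacements $\lambda$ and $\lambda+m$ are $\left(\begin{smallmatrix}0&0\\1&1\end{smallmatrix}\right)$ and $E^{}_{11}$, after which the same four-matrix-unit computation goes through. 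Falling back on \cite{BFGR} for general powers, as you also propose, is precisely what the paper does and is acceptable; just do not present the inclusion argument as a proof of that direction.
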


The matrix function defined by $B(k)$ is analytic in $k$, and either
$1$-periodic (whenever $\lambda$ is an integer) or quasiperiodic with
incommensurate base frequencies $1$ and $\lambda$. The case $m=1$ is
somewhat degenerate in this setting, as we shall explain in more
detail later, in Section~\ref{sec:Fibo}.  In the genuinely
quasiperiodic situation, via standard results from the theory of
quasiperiodic functions, one has the representation
\begin{equation}\label{eq:quasi-B}
    B(k) \, = \, \tilde{B} (x,y) 
    \big|_{x=\lambda k, \ts y=k}
\end{equation}
with $\tilde{B} (x,y) = \left( \begin{smallmatrix}
1 & 1 \\ \tilde{p} (x,y) & 0 \end{smallmatrix} \right)$
and 
\begin{equation}\label{eq:quasi-p}
    \tilde{p} (x,y) \, = \, \ee^{2 \pi \ii x}
    (1 + z + \ldots + z^{m-1})\big|_{z = \ee^{2 \pi \ii y}} .
\end{equation}
Here, both $\tilde{p}$ and $\tilde{B}$ are $1$-periodic
in \emph{both} arguments. Our representation is chosen such that
we have the correspondence
\begin{equation}\label{eq:correspond}
   k \mapsto \lambda k  \quad \longleftrightarrow \quad
   (x,y) \mapsto (x,y) M \ts ,
\end{equation}
where $M=M_m$ is the substitution matrix of $\varrho = \varrho^{}_{m}$.
Each such $M$ defines a toral endomorphism on the $2$-torus,
$\TT^2$.

\subsection{Kronecker products}\label{sec:Kronecker}

Below, we also need the matrices
$A(k) = B (k) \otimes \overline{B(k)}$ for $k\in \RR$, which act on
the space $W \! := \CC^2 \otimes^{}_{\CC} \CC^2$, and the
structure of the $\RR$-algebra $\cA$ generated by them. While the
\mbox{$\CC$-algebra} $\cB$ is irreducible by Fact~\ref{fact:IDA},
$\cA$ is not, because each of its elements commutes with the
$\RR$-linear mapping $C \! : \, W \! \xrightarrow{\quad} W$
defined by $x \otimes y \mapsto \overline{y} \otimes \overline{x}$,
where $W$ is considered as an $\RR$-vector space of dimension
$8$. Now, $W = W_{\! +} \oplus W_{\! -}$ with
$W^{}_{\! \pm} := \{ w \in W : C(w) = \pm\ts  w \}$, where
$\dim^{}_{\RR} (W_{\! +}) = \dim^{}_{\RR} (W_{\! -}) = 4$ and
$W_{\! -} = \ii \ts W_{\! +}$. These spaces are invariant under $\cA$, and one
has the following result.

\begin{lemma}\label{lem:real-alg}
  The\/ $\RR$-algebra\/ $\cA$ satisfies\/ $\dim^{}_{\RR} (\cA) = 16$
  and acts irreducibly on each of the four-dimensional invariant
  subspaces\/ $W_{\! +}$ and\/ $W_{\! -}$ from above.
\end{lemma}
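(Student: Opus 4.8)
The plan is to leverage Fact~\ref{fact:IDA} together with the standard theory of algebras with involution. Since $\cB = \Mat(2,\CC)$ and $A(k) = B(k) \otimes \overline{B(k)}$, the $\CC$-algebra generated by the $A(k)$ is contained in $\cB \otimes \overline{\cB} = \Mat(2,\CC) \otimes \overline{\Mat(2,\CC)} \cong \Mat(4,\CC)$; in fact, by a Goursat/double-centraliser argument or by direct inspection of the generators $D_0 \otimes \overline{D_0}$, $D_0 \otimes \overline{D_\lambda}$, $D_\lambda \otimes \overline{D_0}$, $D_\lambda \otimes \overline{D_\lambda}$ (using incommensurability of $1$ and $\lambda$ in the non-integer case, or the periodic analysis otherwise), this $\CC$-algebra is all of $\Mat(4,\CC)$. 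The first thing I would do is make this step precise: show that the complex span of products of the $A(k)$ equals $\Mat(4,\CC)$, so $\dim_\CC$ of the complex algebra is $16$.

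Next I would bring in the real structure. The key observation is that $\cA$ is the \emph{real} algebra generated by the $A(k)$, and the conjugate-linear map $C$ with $C(x\otimes y) = \overline{y}\otimes\overline{x}$ satisfies $C A(k) = A(k) C$ for all $k$ (because $C(B\otimes\overline B) = (B\otimes\overline B)C$ follows from $C(Bx \otimes \overline{B}\overline{y}) = \overline{B}\,\overline{y}\,\overline{\phantom{x}}\!\!\otimes\ldots$ — one checks this on elementary tensors). Hence every element of $\cA$ commutes with $C$, which forces $\cA \subsetneq \Mat(4,\CC)$ as a real algebra. The real subalgebra of $\Mat(4,\CC)$ consisting of $C$-commuting elements is, by the standard classification, isomorphic to $\Mat(4,\RR)$ — concretely, $W = W_+ \oplus \ii W_+$ realises $W$ as the complexification of the real $4$-space $W_+$, and the $C$-commuting complex-linear maps are exactly those that restrict to real-linear maps $W_+ \to W_+$, giving $\mathrm{End}_\RR(W_+) \cong \Mat(4,\RR)$, of real dimension $16$. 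So $\dim_\RR(\cA) \leqslant 16$.

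For the reverse inequality and for irreducibility, I would argue that $\cA$, as a real subalgebra of $\Mat(4,\RR) = \mathrm{End}_\RR(W_+)$, must be all of it: since the complexification of $\cA$ is the complex algebra generated by the $A(k)$, which we showed equals $\Mat(4,\CC)$, and since $\cA$ is a real form sitting inside $\Mat(4,\RR)$ whose complexification is $\Mat(4,\CC) = \Mat(4,\RR)\otimes_\RR\CC$, we get $\dim_\RR(\cA) = \dim_\CC(\cA\otimes\CC) = 16$, forcing $\cA = \Mat(4,\RR)$. Irreducibility on $W_+$ (and hence on $W_- = \ii W_+$) is then automatic, since $\Mat(4,\RR)$ acts irreducibly on $\RR^4$. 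The main obstacle I anticipate is the very first step: verifying that the complex algebra generated by the $A(k)$ is genuinely all of $\Mat(4,\CC)$ rather than a proper subalgebra. This is not a formal consequence of $\cB = \Mat(2,\CC)$ alone — one needs that enough \emph{distinct} values $A(k)$, $A(k')$ are used, exploiting that $p(k)$ and $\overline{p(k)}$ vary independently (which is where the quasiperiodicity/incommensurability, or in the integer case the explicit periodic structure, enters), so that $B(k)\otimes\overline{B(k')}$-type products are effectively available inside the algebra generated by the diagonal family $\{A(k)\}$. I would handle this by picking a few explicit values of $k$ and computing that the resulting matrices already span a $16$-dimensional complex space, which reduces the whole lemma to a finite linear-algebra check.
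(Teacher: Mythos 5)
Your argument is correct and follows essentially the same route as the paper: both identify $\cA$ with the commutant of the antilinear involution $C$, i.e.\ with a copy of $\Mat(4,\RR)$ inside $\Mat(4,\CC)$ viewed as a real algebra of dimension $32$ (the paper exhibits this real form explicitly via conjugation by the unitary $U$), from which the dimension count and the irreducibility on $W_{\! +}$ and $W_{\! -}$ follow. The one step you leave as a computation --- that products of the $A(k)$ span all of $\Mat(4,\CC)$, equivalently that $\cA$ is not a proper subalgebra of the commutant --- is precisely the step the paper also defers, to the analogous argument in \cite{BFGR}; just make sure your finite check really uses \emph{products}, since the matrices $A(k)$ themselves lie in the four-dimensional span of the $D_i\otimes D_j$ and can never span $16$ dimensions on their own.
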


\begin{proof}
  The argument is analogous to that in the proof of
  \cite[Lemma~5.3]{BFGR}.  In particular, considering $\Mat (4,\CC)$
  as an $\RR$-algebra of dimension $32$, the subalgebra $\cA$ is
  conjugate to $\Mat (4, \RR) \subset \Mat (4,\CC)$ via the
  conjugation $(.) \mapsto U (.) U^{-1}$ with the unitary matrix
\[
      U \, = \, \myfrac{1}{\sqrt{2}} \begin{pmatrix}
      1\! - \! \ii & 0 & 0 & 0 \\ 0 & 1 & -\ii & 0 \\
      0 & -\ii & 1 & 0 \\ 0 & 0 & 0 & 1\! - \! \ii  \end{pmatrix} ,
\]
where $U (W^{}_{\! \mp}) = \frac{1\pm \ii}{\sqrt{2}}\, \RR^4$.
\end{proof}

One can check that $[ U, A (0)] = 0$. More generally, one has
\[
    A^{}_{U} (k) \, = \, U \nts A (k) \ts U^{-1} \, =  \begin{pmatrix}
    1 & 1 & 1 & 1 \\ c(k)  +  s(k) & s(k) & c(k) & 0 \\
    c(k)  -  s(k) & c(k) & -s(k) & 0 \\ c(k)^2 + s(k)^2 & 0 & 0 & 0
    \end{pmatrix} 
\]
with
\[
  c(k) \, = \sum_{\ell=0}^{m-1} \cos \bigl( 2 \pi (\lambda+\ell) k\bigr)
  \quad \text{and} \quad
  s(k) \, = \sum_{\ell=0}^{m-1} \sin \bigl( 2 \pi (\lambda+\ell) k\bigr).
\]
This gives $c(k)^2 + s(k)^2 = \bigl| p(k) \bigr|^2 = \bigl(
\sum_{\ell=0}^{m-1} \cos \bigl((2\ell+1-m)\pi k \bigr) \bigr)^{\nts 2}$ 
and $A^{}_{U} (0) = A (0) = M \otimes M$ with the substitution
matrix $M=M_m$.

Observe that $A(0)^2$ is a strictly positive matrix, with determinant
$\det (M)^4 > 0$. Now, consider
$A^{(2)}_{U} (k) := A^{}_{U} \bigl( \frac{k}{\lambda}\bigr) A^{}_{U}
(k)$, which defines a smooth, real-valued matrix function with
$\lim_{k\to 0} A^{(2)}_{U} (k) = A (0)^2$. Consequently, there is some
$\varepsilon = \varepsilon (m) > 0$ such that $A^{(2)}_{U} (k)$ is
strictly positive, with positive determinant, for all
$ \lvert k \rvert \leqslant \varepsilon$. We can then state the
following result, the proof of which is identical to that of
\cite[Prop.~5.4]{BFGR}.

\begin{prop}\label{prop:explode}
  Let\/ $k \in [0,\varepsilon]$ with the above choice of\/
  $\varepsilon$, and consider the iteration
\[
     w^{}_n \, := \, A^{(2)}_{U} \bigl( \tfrac{k}{\lambda^{2n-2}}\bigr) 
     \cdots A^{(2)}_{U} \bigl( \tfrac{k}{\lambda^2}\bigr) 
     A^{(2)}_{U} (k) \, w^{}_0
\]    
for\/ $n\geqslant 1$ and any non-negative starting vector\/
$w^{}_0 \ne 0$. Then, the vector\/ $w_n$ will be strictly positive for
all\/ $n \in \NN$ and, as\/ $n \to \infty$, it will diverge with
asymptotic growth\/ $c \ts \lambda^{4n} \ts w^{}_{\mathrm{PF}}$.
Here, $c$ is a constant that depends on\/ $w^{}_0$ and\/ $k$, while\/
$w^{}_{\mathrm{PF}} = v^{}_{\mathrm{PF}} \otimes v^{}_{\mathrm{PF}}$
is the statistically normalised PF eigenvector of\/ $M\otimes M$,
with eigenvalue\/ $\lambda^2$  
and\/ $v^{}_{\mathrm{PF}}$ as in Eq.~\eqref{eq:freq}.  \qed
\end{prop}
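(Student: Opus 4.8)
The plan is to mirror the proof of \cite[Prop.~5.4]{BFGR} step by step, using only the two facts recorded just above the statement: for $\lvert k\rvert\leqslant\varepsilon$ the matrix $A^{(2)}_U(k)$ is strictly positive with positive determinant, and $A^{(2)}_U(k)\to A(0)^2=(M\otimes M)^2$ as $k\to 0$, a strictly positive matrix whose simple dominant (Perron--Frobenius) eigenvalue is $\lambda^4$ with right eigenvector $w_{\mathrm{PF}}=v_{\mathrm{PF}}\otimes v_{\mathrm{PF}}$.

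First I would settle the strict positivity of $w_n$. Since $\lambda>1$, every argument $k/\lambda^{2j-2}$ occurring in the product (for $1\leqslant j\leqslant n$) lies in $[0,k]\subseteq[0,\varepsilon]$, so each of the $n$ factors is a strictly positive matrix; as a product of strictly positive matrices sends any non-negative, non-zero vector to a strictly positive one, $w_n>0$ for all $n\in\NN$.

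For the growth I would normalise, writing $\widehat A_n:=\lambda^{-4}A^{(2)}_U(k/\lambda^{2n-2})$ and $\widehat A:=\lambda^{-4}A(0)^2$, so that $\lambda^{-4n}w_n=\widehat A_n\cdots\widehat A_1 w_0$, where $\widehat A$ is strictly positive with simple eigenvalue $1$, rank-one spectral projection $\Pi=w_{\mathrm{PF}}\,\ell_{\mathrm{PF}}^{T}$ (with $\ell_{\mathrm{PF}}$ the left PF eigenvector of $M\otimes M$ normalised so that $\ell_{\mathrm{PF}}^{T}w_{\mathrm{PF}}=1$), all other eigenvalues strictly inside the unit disc, and $\widehat A$ invertible because $\det M=-m\neq 0$. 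Since $A^{(2)}_U$ is real-analytic with $A^{(2)}_U(k)-A(0)^2=O(k)$ as $k\to 0$, one gets $\lVert\widehat A_n-\widehat A\rVert=O(\lambda^{-2n})$, hence $\sum_n\lVert\widehat A_n-\widehat A\rVert<\infty$. Feeding this into the standard convergence theorem for infinite products of matrices that converge to a matrix with a simple dominant eigenvalue (the version used in \cite{BG15,BFGR}) shows that $\lambda^{-4n}w_n$ converges, necessarily to a vector in the range of $\Pi$, i.e.\ to $c\,w_{\mathrm{PF}}$ for some $c=c(w_0,k)$, which is the asserted asymptotics $w_n\sim c\ts\lambda^{4n}\ts w_{\mathrm{PF}}$. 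To rule out the degenerate case $c=0$ I would invoke the positivity a second time: by Birkhoff's contraction theorem for the Hilbert projective metric, the directions $w_n/\lVert w_n\rVert$ stay uniformly inside the positive cone and, because $\widehat A_n\to\widehat A$ whose iterates pull every cone direction to that of $w_{\mathrm{PF}}$, they converge to the direction of $w_{\mathrm{PF}}$; together with $\lVert w_n\rVert\asymp\lambda^{4n}$ (again from the summability above) this forces $c>0$.

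The part I expect to require the most care is the quantitative decay estimate $\lVert\widehat A_n-\widehat A\rVert=O(\lambda^{-2n})$ and its insertion into the infinite-product lemma; the positivity arguments and the Perron--Frobenius bookkeeping are routine and follow \cite[Sect.~5]{BFGR} essentially line by line, so I would keep the write-up short and refer there for the details that transfer unchanged.
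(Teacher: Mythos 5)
Your approach coincides with the paper's: the text gives no argument of its own for this proposition but states that the proof is identical to that of \cite[Prop.~5.4]{BFGR}, which is precisely what you reconstruct, and the positivity argument, the normalisation $\widehat A_n=\lambda^{-4}A^{(2)}_{U}(k/\lambda^{2n-2})$ with $\sum_n\lVert\widehat A_n-\widehat A\rVert<\infty$, and the infinite-product limit landing in the range of the rank-one projection are all sound. The one step that is not justified as written is the lower half of $\lVert w_n\rVert\asymp\lambda^{4n}$: summability only gives boundedness of the normalised products (hence the upper bound) together with convergence of $\lambda^{-4n}w_n$, so deducing ``$\asymp$ from the summability'' and then $c>0$ is circular --- the lower bound is exactly equivalent to $c>0$. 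To close this, use the uniform strict positivity on the compact set $[0,\varepsilon]$: the entries of every factor $A^{(2)}_{U}(k/\lambda^{2j-2})$ are bounded below by some $\delta>0$, so all iterates $w_n$ lie in a fixed subcone of the positive cone on which $\lVert w\rVert\leqslant C\,\ell^{T}w$ for the strictly positive left PF eigenvector $\ell=(\lambda,1)\otimes(\lambda,1)$ of $M\otimes M$; then $\ell^{T}w_n\geqslant\bigl(\lambda^{4}-C'\lambda^{-2n}\bigr)\,\ell^{T}w_{n-1}$, and since the product $\prod_{n}\bigl(1-O(\lambda^{-2n})\bigr)$ converges to a positive number, one gets $\ell^{T}w_n\geqslant c'\lambda^{4n}$ with $c'>0$, which forces $c>0$. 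With that supplement the proposal is complete and matches the intended argument.
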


As we shall see later, this growth behaviour will collide with
a local integrability condition, and then help to simplify our
spectral problem by a dimensional reduction.

\subsection{Pair correlations}

To introduce the pair correlation functions, let $\YY = \YY_{\nts m}$ be
the tiling hull introduced earlier. Any $\cT \in \YY$ is built from
two prototiles (of length $\lambda>1$ and $1$, respectively).  We now
define the corresponding point set $\vL$ via the left endpoints of the
tiles in $\cT$, so $\vL = \vL^{(0)} \,\dot{\cup}\, \vL^{(1)}$ with
$\vL^{(i)}$ denoting the left endpoints of type $i$. Clearly, $\cT$
and $\vL$ are MLD, as are their hulls; see \cite{TAO} for background. 
By slight abuse of notation, we use $\YY$ for both hulls, which means 
that we implicitly identify these two viewpoints.

Any two elements of $\YY$ are \emph{locally indistinguishable} (LI),
so $\YY$ consists of a single LI class, see \cite[Thm.~4.1]{TAO} or
\cite{Q}, which has the following important consequence.

\begin{fact}\label{fact:constant}
  For any\/ $i,j\in\{0,1\}$, the difference set\/
  $\vL^{(i)} - \vL^{(j)}$ is constant on the hull, which means that it
  does not depend on the choice of $\vL\in\YY$.  \qed
\end{fact}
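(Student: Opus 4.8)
The plan is to deduce the statement from the single-LI-class property of $\YY$ together with the fact that membership of a given vector $t$ in a difference set $\vL^{(i)}-\vL^{(j)}$ is a \emph{local} condition on the tiling. First I would fix $t\in\RR$ and $i,j\in\{0,1\}$, and observe that, for $\cT\in\YY$ with left-endpoint set $\vL=\vL^{(0)}\,\dot\cup\,\vL^{(1)}$, one has $t\in\vL^{(i)}-\vL^{(j)}$ if and only if there exists $x\in\RR$ such that $\cT$ carries a tile of type $j$ with left endpoint at $x$ and a tile of type $i$ with left endpoint at $x+t$. Since the two prototiles have fixed lengths $\lambda$ and $1$, the presence of such a configuration is witnessed by the restriction of $\cT$ to the bounded window $[x-\delta,\,x+t+\delta]$ for any $\delta>0$; that is, it is determined by a \emph{patch} of $\cT$ of diameter at most $\lvert t\rvert + \lambda$.

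Next I would invoke the LI property: by \cite[Thm.~4.1]{TAO} (or \cite{Q}), any two elements of $\YY$ are locally indistinguishable, meaning every patch occurring in one element of $\YY$ occurs (up to translation) in every other element. Hence, if $t\in\vL^{(i)}-\vL^{(j)}$ for \emph{one} choice of $\vL\in\YY$, the corresponding finite patch witnessing this occurs in every $\cT'\in\YY$, so $t\in\vL'^{(i)}-\vL'^{(j)}$ for the associated $\vL'$ as well. By symmetry of the LI relation, the converse holds too, so the set $\{\,t\in\RR : t\in\vL^{(i)}-\vL^{(j)}\,\}$ does not depend on the choice of $\vL\in\YY$. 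As $i,j$ were arbitrary, this is exactly the claim.

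I do not anticipate a serious obstacle here: the only point requiring a little care is the reduction of the ``difference occurs'' condition to a genuinely \emph{finite} patch condition, which uses that the prototile lengths are bounded (indeed constant) so that finitely many tiles fill the segment between the two endpoints. Everything else is a direct application of the single-LI-class structure of the hull, which is standard for primitive inflation systems and already recorded in the excerpt. One could equivalently phrase the argument via repetitivity and the closed-orbit structure under the $\RR$-action, but the LI formulation is the most economical.
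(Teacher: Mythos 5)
Your proposal is correct and is essentially the argument the paper intends: the Fact is stated without proof precisely as a consequence of the single-LI-class property quoted in the preceding sentence, and your write-up just makes explicit the (routine) observation that membership of $t$ in $\vL^{(i)}-\vL^{(j)}$ is witnessed by a finite patch. No gaps.
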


Given $\vL\in\YY$, let $\nu^{}_{ij} (z)$ denote the (relative)
frequency of occurrence of a point of type $i$ (left) and one of type
$j$ (right) at distance $z$, where
$\nu^{}_{ij} (-z) = \nu^{}_{ji} (z)$.  By the strict ergodicity of our
system, any such frequency exists (and uniformly so), and is the same
for all $\vL\in\YY$. One can write the frequency as a limit,
\[
    \nu^{}_{ij} (z) \, = \lim_{r\to\infty} 
    \frac{\card \bigl( \vL^{(i)}_{r} \cap (\vL^{(j)}_{r} \nts - z) \bigr)}
         {\card (\vL^{}_{r})}
    \, = \, \myfrac{1}{\dens (\vL)} \lim_{r\to\infty}
    \frac{\card \bigl( \vL^{(i)}_{r} \cap (\vL^{(j)}_{r}
       \nts - z ) \bigr)}{2r} \ts ,
\]
where $\vL\in\YY$, with $\vL = \vL^{(0)} \, \dot{\cup} \, \vL^{(1)}$
as above. The lower index $r$ indicates the intersection of a set with
the interval $[-r,r]$.  Moreover, one has
\[
   \nu^{}_{ij} (z) \, > \, 0  \quad
   \Longleftrightarrow \quad
   z \in \varDelta^{}_{ij} := \vL^{(j)} - \vL^{(i)}  \ts ,
\]
and
$\vU^{}_{\nts\nts ij} := \sum_{z \in \varDelta_{ij}} \nu^{}_{ij} (z)
\, \delta^{}_{z}$
defines a positive pure point measure on $\RR$ with locally finite
support. Note that $\vU^{}_{ii}$ is also positive definite. We call
the $\nu^{}_{ij} (z)$ the \emph{pair correlation coefficients} and the
$\vU^{}_{\nts\nts ij}$ the corresponding \emph{pair correlation
measures} of $\YY$, where
$\vU^{}_{\nts \nts ij} (\{ z \}) = \nu^{}_{ij} (z)$. Our relative
normalisation means that we have
$\nu^{}_{00} (0) + \nu^{}_{11} (0) = 1$, so that
\[
    \nu^{}_{00} (0) \, = \, \nu^{}_{0} 
    \quad \text{and} \quad
    \nu^{}_{11} (0) \, = \, \nu^{}_{1}
\]
are the relative tile (or letter) frequencies from
Eq.~\eqref{eq:freq}.

\begin{prop}\label{prop:pair-reno}
  The pair correlation coefficients of\/ $\YY$ satisfy the exact
  renormalisation relations
\[
   \nu^{}_{ij} (z) \, = \, \myfrac{1}{\lambda}
   \sum_{k,\ell} \sum_{r\in T_{ik}} \sum_{s\in T_{j\ell}}
   \nu^{}_{k\ell} \Bigl( \myfrac{z+r-s}{\lambda} \Bigr)
\]
for any\/ $i,j\in\{0,1\}$, subject to the condition that\/
$\nu^{}_{mn} (z) = 0$ whenever\/ $z \notin \varDelta^{}_{mn}$. 
In terms of the measures\/ $\vU^{}_{\nts\nts ij}$, this
amounts to the convolution identity
\[
    \vU \, = \, \myfrac{1}{\lambda}
    \Bigl( \widetilde{\delta^{}_{T}} \overset{*}{\otimes}
    \delta^{}_{T} \Bigr) * (f \nts \nts . \ts \vU) \ts ,
\]
where\/ $f$ is the dilation defined by\/ $x \mapsto \lambda x$ and\/
$\overset{*}{\otimes}$ denotes the Kronecker convolution product, 
while\/ $\vU$ stands for the measure vector\/
$(\vU^{}_{00}, \vU^{}_{01}, \vU^{}_{\nts 10}, \vU^{}_{\nts 11})^{T}$.
\end{prop}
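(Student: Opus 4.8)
The plan is to derive the scalar renormalisation relations directly from the geometric inflation structure together with the fact that the pair correlation coefficients are constant on the hull, and then to repackage them as the stated convolution identity by unwinding definitions.

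First I would recall the inflation picture encoded by $T$: applying $\varrho=\varrho^{}_{m}$ geometrically to any $\cT\in\YY$ produces the tiling $\varrho(\cT)$ by rescaling $\cT$ by the factor $\lambda$ and subdividing, so that — by the very definition of $T$ — a tile of type $j$ of $\cT$ with left endpoint $a$ is replaced, in $\varrho(\cT)$, by the tiles of type $i$ with left endpoints $\lambda a+r$ for $r\in T^{}_{ij}$. Since the inflation maps the hull into itself, $\varrho(\cT)$ again lies in $\YY$; by strict ergodicity and the fact that $\YY$ is a single LI class (see the discussion around Fact~\ref{fact:constant}), $\varrho(\cT)$ has the same density $\dens(\vL)$ and the same pair correlation coefficients $\nu^{}_{ij}(z)$ as $\cT$. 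In particular, the number of left endpoints of $\varrho(\cT)$ in $[-\lambda r,\lambda r]$ is asymptotically $\lambda$ times the number of left endpoints of $\cT$ in $[-r,r]$.

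The core of the argument is a counting step. For fixed $i,j$ and $z$, and large $r$, I would count the pairs $(p,q)$ of left endpoints of $\varrho(\cT)$ with $p$ of type $i$, $q$ of type $j$ and $q-p=z$, both lying in $[-\lambda r,\lambda r]$. Each such $p$ sits at relative position $\bar r\in T^{}_{ik}$ inside the supertile over some type-$k$ tile of $\cT$ at position $a$, and each such $q$ at relative position $\bar s\in T^{}_{j\ell}$ inside the supertile over some type-$\ell$ tile at position $b$; this assignment is unambiguous because the supertile structure of $\varrho(\cT)$ is given by construction, so no recognisability is needed here. The constraint $q-p=z$ becomes $\lambda(b-a)+\bar s-\bar r=z$, i.e.\ $b-a=(z+\bar r-\bar s)/\lambda$, whence the number of such pairs equals the triple sum $\sum_{k,\ell}\sum_{\bar r\in T_{ik}}\sum_{\bar s\in T_{j\ell}}$ over the numbers of type-$(k,\ell)$ pairs of $\cT$ at distance $(z+\bar r-\bar s)/\lambda$ that lie in an interval differing from $[-r,r]$ by only a bounded amount. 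The discrepancy coming from pairs that straddle the window boundary is $O(1)$ relative to the $O(r)$ main term — the finiteness of $S$ and the boundedness of the two tile lengths bound the relevant supertiles — and so is negligible in the limit. Dividing by the number of left endpoints of $\varrho(\cT)$ in $[-\lambda r,\lambda r]$, using the density relation to extract the factor $1/\lambda$, and letting $r\to\infty$ gives
\[
   \nu^{}_{ij}(z) \, = \, \tfrac{1}{\lambda}\sum_{k,\ell}\sum_{\bar r\in T_{ik}}\sum_{\bar s\in T_{j\ell}} \nu^{}_{k\ell}\bigl((z+\bar r-\bar s)/\lambda\bigr),
\]
with the convention $\nu^{}_{k\ell}(w)=0$ for $w\notin\varDelta_{k\ell}$ automatically in force.

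Finally, I would translate this into the measure identity. With $f$ the dilation $x\mapsto\lambda x$ one has $f\nts\nts .\,\vU^{}_{k\ell}=\sum_{w}\nu^{}_{k\ell}(w)\,\delta^{}_{\lambda w}$, and for the finite sets $T^{}_{ik}$ one has $\widetilde{\delta^{}_{T_{ik}}}=\sum_{\bar r\in T_{ik}}\delta^{}_{-\bar r}$ and $\delta^{}_{T_{j\ell}}=\sum_{\bar s\in T_{j\ell}}\delta^{}_{\bar s}$; convolving these three measures, the point mass of $\widetilde{\delta^{}_{T_{ik}}}*\delta^{}_{T_{j\ell}}*(f\nts\nts .\,\vU^{}_{k\ell})$ at $z$ is exactly $\sum_{\bar r\in T_{ik}}\sum_{\bar s\in T_{j\ell}}\nu^{}_{k\ell}\bigl((z+\bar r-\bar s)/\lambda\bigr)$. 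Summing over $k,\ell$ and reading $\widetilde{\delta^{}_{T_{ik}}}*\delta^{}_{T_{j\ell}}$ as the $\bigl((ij),(k\ell)\bigr)$ entry of the Kronecker convolution $\widetilde{\delta^{}_{T}}\overset{*}{\otimes}\delta^{}_{T}$ acting on $\vU=(\vU^{}_{00},\vU^{}_{01},\vU^{}_{\nts 10},\vU^{}_{\nts 11})^{T}$, and noting that all measures involved are pure point with locally finite support so that equality of point masses suffices, yields the asserted identity $\vU=\tfrac{1}{\lambda}\bigl(\widetilde{\delta^{}_{T}}\overset{*}{\otimes}\delta^{}_{T}\bigr)*(f\nts\nts .\,\vU)$. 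The only step requiring genuine care is the counting together with its boundary estimate; the rest is bookkeeping.
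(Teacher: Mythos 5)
Your proof is correct and in fact more detailed than the paper's own argument, which is only a sketch deferring to \cite{BG15} and \cite{BFGR}. The one substantive difference lies in the direction of the renormalisation step. The paper decomposes a given $\cT\in\YY$ into its level-$1$ supertiles, which requires aperiodicity and the ensuing local recognisability to make that decomposition unique, and then relates tile distances in $\cT$ to supertile distances, i.e.\ to the rescaled correlations of the de-substituted tiling. You instead pass from $\cT$ to $\varrho(\cT)$, where the supertile structure is given by construction, so recognisability plays no role; what you need in its place is that $\varrho$ maps the hull into itself and that the coefficients $\nu^{}_{ij}$ and the density are constant on the hull, both of which are available here from strict ergodicity and Fact~\ref{fact:constant}. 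Both routes produce the identical counting identity, and your boundary estimate (uniform discreteness of $\vL$ plus finiteness of the sets $T^{}_{ij}$ giving an $O(1)$ correction against an $O(r)$ main term) and your translation into the measure and Kronecker-convolution form are correct, including the identification of $\widetilde{\delta^{}_{T_{ik}}} * \delta^{}_{T_{j\ell}}$ as the $\bigl((ij),(k\ell)\bigr)$ entry. The only point worth making explicit is why $\varrho(\cT)\in\YY$ (inflated legal patches are legal, and the hull is the single LI class of such tilings), since this inclusion is what replaces the recognisability input of the paper's version.
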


\begin{proof}[Sketch of proof]
  For $m=1$, this was shown in \cite{BG15}, while the case $m=3$ is
  treated in \cite{BFGR}. In general, the underlying observation is
  that, due the aperiodicity of $\varrho$ and the ensuing local
  recognisability, each tile lies in a unique \mbox{level{\ts}-$1$}
  supertile, and the frequencies for the distance between tiles can
  uniquely be related to the frequencies of (generally different)
  supertile distances, after a change of scale. A simple computation 
  then gives the first relation (a more general version of which will 
  appear in \cite{BGM}).

  The second identity, in the form of measures, follows from the
  definition of the $\vU^{}_{\nts \nts ij}$ by a straightforward
  calculation; see \cite{BFGR} for details in the case $m=3$.
\end{proof}

\section{Autocorrelation and diffraction}\label{sec:auto}

As above, $m\in\NN$ is arbitrary but fixed. For $\vL\in\YY$, we
consider the weighted Dirac comb
\begin{equation}\label{eq:def-comb}
    \omega^{}_{u} \, = \sum_{x\in\vL} u^{}_{x} \, \delta^{}_{x}
   \, = \, u^{}_{0} \, \delta^{}_{\! \!\vL^{(0)}_{\vphantom{a}}} + u^{}_{1} \,
   \delta^{}_{\! \! \vL^{(1)}_{\vphantom{a}} }\ts ,
\end{equation}
where $u^{}_{0}$ and $u^{}_{1}$ are the (possibly complex-valued)
weights of the two types of points. The corresponding
\emph{autocorrelation measure}, or autocorrelation for short,
is defined by the volume-\-averaged (or Eberlein) convolution
\[
    \gamma^{}_{u} \, = \, \omega^{}_{u} \circledast\ts
   \widetilde{\omega^{}_{u}} \, = \, \dens (\vL)
   \sum_{ij} \overline{u^{}_{i}} \: \vU^{}_{\nts \nts ij}
   \ts\ts u^{}_{j} \ts ,
\]
where we refer to \cite[Chs.~8 and 9]{TAO} for the general setting and
to \cite{BFGR} for the detailed calculations in the case $m=3$.
Existence and uniqueness (with independence of $\vL$) are again a
consequence of the strict (and hence in particular unique) ergodicity
of our system.

Since all the measures
$\vU^{}_{\nts \nts ij} = (\delta^{}_{\nts -\vL^{(i)}} \nts \circledast
\delta^{}_{\! \vL^{(j)}} )/\dens (\vL)$
are well-defined Eberlein convolutions of translation
bounded measures and hence Fourier transformable by
\cite[Lemma~1]{BG15}, we also have the relation
\begin{equation}\label{eq:q-form}
    \widehat{\gamma^{}_{u}} \, = \, \dens (\vL)
    \sum_{ij} \overline{u^{}_{i}} \: \widehat{\vU}^{}_{\nts ij}
    \ts\ts u^{}_{j}
\end{equation}
after Fourier transform, where $\widehat{\gamma^{}_{u}}$ is
a positive measure, for any $u \in \CC^2$. Note that each
$\widehat{\vU}^{}_{\nts ij}$ is a positive definite measure
on $\RR$,  and also positive for $i=j$.
Since $\widehat{\vU}^{}_{\nts ij} = \widehat{\vU^{}_{\nts ij}}$ 
by definition, one has
\[
       \overline{\widehat{\vU^{}_{\nts ij}}}     \, = \,
       \widehat{\widetilde{\vU^{}_{\nts ij}}}    
        \, = \,   \widehat{\vU^{}_{\nts \nts ji}} \ts .
\]
This, in combination with Eq.~\eqref{eq:q-form}, implies the
following property.

\begin{fact}\label{fact:Hermitian}
  For any bounded Borel set\/ $\cE \subset \RR$, the complex matrix\/
  $\bigl( \widehat{\vU}^{}_{\nts ij} (\cE)\bigr)_{0 \leqslant i,j
    \leqslant 1}$ is Hermitian and positive semi-definite.  \qed
\end{fact}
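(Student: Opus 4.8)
The statement to prove is Fact~\ref{fact:Hermitian}: for any bounded Borel set $\cE$, the matrix $(\widehat{\vU}_{ij}(\cE))$ is Hermitian and positive semi-definite.

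The plan is to deduce both properties directly from the structural facts already assembled just above the statement, namely: (i) the identity $\overline{\widehat{\vU_{ij}}} = \widehat{\vU_{ji}}$ coming from $\widetilde{\vU_{ij}} = \vU_{ji}$; and (ii) Eq.~\eqref{eq:q-form}, which expresses $\widehat{\gamma_u}$ as a quadratic form in $u$ with matrix $\dens(\vL)(\widehat{\vU}_{ij})$, together with the fact that $\widehat{\gamma_u}$ is a positive measure for every $u\in\CC^2$.

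First I would establish the Hermitian property. Evaluating the identity $\overline{\widehat{\vU_{ij}}} = \widehat{\vU_{ji}}$ on the real-valued indicator of a bounded Borel set $\cE$ gives $\overline{\widehat{\vU}_{ij}(\cE)} = \widehat{\vU}_{ji}(\cE)$; since a signed/complex measure applied to a set is just a number, this is exactly the statement that the matrix $H(\cE) := (\widehat{\vU}_{ij}(\cE))$ satisfies $\overline{H(\cE)}^{T} = H(\cE)$, i.e.\ $H(\cE)$ is Hermitian. (One should note here that each $\widehat{\vU}_{ij}$ is a translation bounded complex measure, so evaluation on a bounded Borel set is well defined and finite; this is already recorded in the text via Fourier transformability of the $\vU_{ij}$.)

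Next I would establish positive semi-definiteness. Fix $u=(u_0,u_1)\in\CC^2$. By Eq.~\eqref{eq:q-form}, for every bounded Borel set $\cE$ one has
\[
  \widehat{\gamma_u}(\cE) \, = \, \dens(\vL) \sum_{i,j} \overline{u_i}\, \widehat{\vU}_{ij}(\cE)\, u_j
  \, = \, \dens(\vL)\, \langle u, H(\cE)\, u\rangle .
\]
Since $\widehat{\gamma_u}$ is a positive measure and $\dens(\vL)>0$, the left-hand side is nonnegative, hence $\langle u, H(\cE)\, u\rangle \geqslant 0$. As $u\in\CC^2$ was arbitrary and $H(\cE)$ is Hermitian, this is precisely positive semi-definiteness of $H(\cE)$. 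The only subtlety worth a sentence is the justification that $\widehat{\gamma_u}$ is genuinely a \emph{positive} measure (not merely positive definite) — but this is standard for autocorrelations of translation bounded Dirac combs and is already asserted in the line following Eq.~\eqref{eq:q-form}, so it can simply be cited. I do not anticipate a real obstacle here; the content of the Fact is essentially a repackaging of Eq.~\eqref{eq:q-form} and the conjugation symmetry $\widetilde{\vU_{ij}}=\vU_{ji}$, and the proof is a short two-step argument (Hermiticity from the symmetry, semi-definiteness from positivity of $\widehat{\gamma_u}$ for all weight vectors $u$). If anything, the mild care point is bookkeeping: making sure one evaluates measures on the fixed bounded Borel set $\cE$ throughout rather than testing against arbitrary functions, so that the "matrix of numbers" interpretation is literally correct.
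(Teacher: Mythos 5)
Your proposal is correct and follows essentially the same route as the paper, which presents this Fact as an immediate consequence of the preceding two observations: the conjugation symmetry $\overline{\widehat{\vU^{}_{ij}}}=\widehat{\vU^{}_{\nts\nts ji}}$ yields Hermiticity, and evaluating the quadratic form of Eq.~\eqref{eq:q-form} on $\cE$ together with the positivity of $\widehat{\gamma^{}_{u}}$ for every $u\in\CC^2$ yields positive semi-definiteness. Your added remarks on finiteness of $\widehat{\vU}^{}_{\nts ij}(\cE)$ via translation boundedness are a reasonable bit of bookkeeping that the paper leaves implicit.
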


Note that, since $\widehat{\vU}^{}_{00}$ and
$\widehat{\vU}^{}_{\nts 11}$ are positive measures on $\RR$, the
positive semi-definiteness of the matrix
$\bigl( \widehat{\vU}^{}_{\nts ij} (\cE)\bigr)$ is simply equivalent
to the determinant condition
$\det \bigl( \widehat{\vU}^{}_{\nts ij} (\cE)\bigr) \geqslant
0$. \smallskip

As a counterpart to Proposition~\ref{prop:pair-reno}, with
$\widehat{\vU}$ denoting the vector of Fourier transforms of the pair
correlation measures, we get the following result.

\begin{prop}\label{prop:F-reno}
  Under Fourier transform, the second identity of
  Proposition~$\ref{prop:pair-reno}$ turns into the relation
\[
   \widehat{\vU} \, = \, \myfrac{1}{\lambda^2}\,
   A (.) \cdot \bigl( f^{-1} \! . \ts \widehat{\vU}\bigr) ,
\]
where\/ $A(k) = B(k) \otimes \overline{B(k)}$ with the Fourier
matrix\/ $B(k)$ from Eq.~\eqref{eq:F-mat} and\/ $f(x) = \lambda x$.
\qed
\end{prop}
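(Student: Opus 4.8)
The plan is to take the Fourier transform of the convolution identity in Proposition~\ref{prop:pair-reno}, namely
\[
    \vU \, = \, \myfrac{1}{\lambda}
    \Bigl( \widetilde{\delta^{}_{T}} \overset{*}{\otimes}
    \delta^{}_{T} \Bigr) * (f \nts . \ts \vU) \ts ,
\]
and to track how each operation on the right-hand side transforms. The key observations I would use are: the Fourier transform turns convolution into (componentwise, or here matrix-times-vector) multiplication, the Kronecker convolution product $\overset{*}{\otimes}$ of measure matrices turns into the Kronecker (tensor) product $\otimes$ of their Fourier transforms, the tilde operation $\widetilde{.}$ corresponds to complex conjugation on the Fourier side, and the dilation $f\nts . \ts \vU$ by the factor $\lambda$ transforms into the rescaled measure $f^{-1}\nts.\ts\widehat{\vU}$, with an additional Jacobian factor $\tfrac{1}{\lambda}$ (per coordinate) from the scaling. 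Since $\vU$ is a vector of four measures on $\RR$, only a single scaling factor $\tfrac{1}{\lambda}$ appears here, so together with the prefactor $\tfrac1\lambda$ already present this yields the announced $\tfrac{1}{\lambda^{2}}$.

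Concretely, I would proceed as follows. First, recall from Eq.~\eqref{eq:F-mat} that $B(k) = \overline{\widehat{\delta^{}_{T}}(k)}$, equivalently $\widehat{\delta^{}_{T}}(k) = \overline{B(k)}$, and hence $\widehat{\widetilde{\delta^{}_{T}}}(k) = \overline{\widehat{\delta^{}_{T}}(k)} = B(k)$ by the general rule $\widehat{\widetilde{\mu}} = \overline{\widehat{\mu}}$. Second, use that the Fourier transform of a Kronecker convolution product of measure matrices is the Kronecker product of the Fourier transforms, so
\[
    \widehat{\widetilde{\delta^{}_{T}} \overset{*}{\otimes} \delta^{}_{T}}(k)
    \, = \, \widehat{\widetilde{\delta^{}_{T}}}(k) \otimes \widehat{\delta^{}_{T}}(k)
    \, = \, B(k) \otimes \overline{B(k)} \, = \, A(k) \ts .
\]
Third, note that convolution of the measure matrix with the measure vector $f\nts.\ts\vU$ becomes, under Fourier transform, the pointwise matrix action $A(k)\cdot \widehat{f\nts.\ts\vU}(k)$. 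Fourth, evaluate the Fourier transform of the dilated measure: for a measure $\mu$ on $\RR$ and $f(x)=\lambda x$ with $\lambda>0$ one has $\widehat{f\nts.\ts\mu} = \tfrac{1}{\lambda}\, f^{-1}\nts.\ts\widehat{\mu}$, and this extends componentwise to the vector $\vU$. Combining the prefactor $\tfrac1\lambda$ from the proposition with this additional $\tfrac1\lambda$ gives $\tfrac{1}{\lambda^{2}}$, and assembling the pieces yields exactly
\[
   \widehat{\vU} \, = \, \myfrac{1}{\lambda^2}\,
   A (.) \cdot \bigl( f^{-1} \! . \ts \widehat{\vU}\bigr) .
\]

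The main point requiring care — though it is not so much an obstacle as a bookkeeping matter — is keeping the two conjugations straight: the one coming from the tilde on $\delta^{}_{T}$ (which, together with the defining relation $B = \overline{\widehat{\delta^{}_{T}}}$, is precisely what produces the \emph{un}conjugated factor $B(k)$ in the first tensor slot) and the one already built into the definition $A(k) = B(k)\otimes\overline{B(k)}$ (which supplies the $\overline{B(k)}$ in the second slot). One should also confirm that all measures involved are translation bounded and Fourier transformable, so that these manipulations are legitimate; this is guaranteed by the remarks preceding Eq.~\eqref{eq:q-form}, where the $\vU^{}_{\nts\nts ij}$ were identified as Eberlein convolutions of translation bounded measures and hence Fourier transformable by \cite[Lemma~1]{BG15}. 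A detailed version of this computation in the case $m=3$ is in \cite{BFGR}, and the argument here is identical word for word; we therefore omit the routine details.
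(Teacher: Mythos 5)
Your computation is correct and is exactly the argument the paper has in mind: the proposition is stated with an immediate \qed because it follows by Fourier transforming the convolution identity of Proposition~\ref{prop:pair-reno}, using $\widehat{\widetilde{\delta^{}_{T}}}=\overline{\widehat{\delta^{}_{T}}}=B$, the compatibility of $\overset{*}{\otimes}$ with the Kronecker product, and the dilation rule $\widehat{f\ts.\ts\mu}=\tfrac{1}{\lambda}\ts f^{-1}\nts.\ts\widehat{\mu}$, which supplies the extra factor turning $\tfrac{1}{\lambda}$ into $\tfrac{1}{\lambda^{2}}$. Your bookkeeping of the two conjugations and the appeal to transformability of the Eberlein convolutions are both sound, so nothing is missing.
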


\subsection{Pure point part}

By \cite[Lemma~6.1]{TAO}, the identity from
Proposition~\ref{prop:F-reno} must hold for each spectral component of
$\widehat{\vU}$ separately. We write the pure point part as
\[
    \bigl( \widehat{\vU} \bigr)_{\mathsf{pp}} \, =
    \sum_{k\in K} \cI (k) \, \delta^{}_{k}
\]
with the intensity vector $\cI (k) = \widehat{\vU} (\{ k \})$ and $K$
the support of the pure point part, which is (at most) a countable
set. Without loss of generality, we may assume that
$\lambda K \subseteq K$, possibly after enlarging $K$ appropriately.
Inserting this into the above identity, one obtains
\begin{equation}\label{eq:pp-reno}
   \cI (k) \, = \, \myfrac{1}{\lambda^2} \ts\ts
       A(k) \, \cI(\lambda k) \ts .
\end{equation}
In particular, this gives $A(0) \, \cI(0) = \lambda^2 \, \cI (0)$,
which means
\begin{equation}\label{eq:central}
  \widehat{\vU}^{}_{\nts \nts ij} (\{ 0 \}) \, = \,
  \cI (0) \, = \, \nu^{}_{i} \, \nu^{}_{j} \, = \,
  \frac{\dens(\vL^{(i)}) \dens(\vL^{(j)})}{\bigl(\dens(\vL)\bigr)^2}
\end{equation}
with the relative frequencies $\nu^{}_i$ from Eq.~\eqref{eq:freq}.

\subsection{Conditions on absolutely continuous part}

Likewise, if we represent $(\widehat{\vU})^{}_{\mathsf{ac}}$ by the
vector $h$ of its Radon--Nikodym densities relative to
Lebesgue measure, one obtains \cite{BFGR}
\begin{equation}\label{eq:dens-reno}
   h(k) \, = \, \myfrac{1}{\lambda} \,
   A(k) \, h(\lambda k) \ts ,
\end{equation}
which holds for a.e.\ $k\in\RR$. Here, the different exponent for the
prefactor in comparison to Eq.~\eqref{eq:pp-reno} is the crucial point
to observe and harvest. Since $\det (B(k)) = - p(k) = 0$ holds if and
only if $k\in Z_m := \frac{1}{m} \ZZ \setminus \ZZ$, the matrix $A(k)$
is invertible for all $k \notin Z_m$, hence for a.e.\ $k\in\RR$.  For
such $k$, we also have
\begin{equation}\label{eq:A-outward}
    h (\lambda k) \, = \, \lambda \ts\ts A^{-1} (k) \, h (k) \ts ,
\end{equation}
which is the outward-going counterpart to Eq.~\eqref{eq:dens-reno}.

If we interpret the vector $h$ as a matrix
$(h^{}_{ij})^{}_{0 \leqslant i,j \leqslant 1}$, the iterations from
Eqs.~\eqref{eq:dens-reno} and \eqref{eq:A-outward} can also be written
as
\begin{equation}\label{eq:iter-1}
\begin{split}
     \bigl( h^{}_{ij} \bigl(\tfrac{k}{\lambda})\bigr) \, & = \,
     \lambda^{-1} B \bigl(\tfrac{k}{\lambda}\bigr) 
     \bigl( h^{}_{ij} (k) \bigr)
     B^{\dagger} \bigl(\tfrac{k}{\lambda}\bigr)
     \quad \text{and} \quad \\[2mm]
     \bigl( h^{}_{ij} (\lambda k)\bigr) \, & = \,
     \lambda B^{-1} (k) \bigl( h^{}_{ij} (k) \bigr)
     ( B^{\dagger})^{-1} (k) \ts ,
\end{split}
\end{equation}
where $B^{\dagger}$ denotes the Hermitian adjoint of $B$.
This suggests a suitable decomposition of $h$.

\begin{lemma}\label{lem:decompose}
  For a.e.\ $k\in\RR$, the Radon--Nikodym matrix\/
  $\bigl( h^{}_{ij} (k)\bigr)$ is Hermitian and positive
  semi-definite. Moreover, it is of rank at most\/ $1$.
\end{lemma}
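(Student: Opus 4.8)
The plan is to handle the three assertions in turn, reading off Hermiticity and positive semi-definiteness from Fact~\ref{fact:Hermitian} and then extracting the rank bound from the clash between the renormalisation prefactor $\lambda^{-1}$ and the Perron--Frobenius growth of $A(0)=M\otimes M$, in the spirit of the remark following Proposition~\ref{prop:explode}.

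First I would establish the Hermitian, positive semi-definite part. By Fact~\ref{fact:Hermitian}, the matrix $\bigl(\widehat{\vU}^{}_{ij}(\cE)\bigr)$ is Hermitian and positive semi-definite for every bounded Borel set $\cE$. Applying this to balls $\cE=B^{}_{\varepsilon}(k)$ and invoking the differentiation theorem for the four translation bounded (complex) measures $\widehat{\vU}^{}_{ij}$ simultaneously, one gets $\lim_{\varepsilon\to 0}\bigl(\widehat{\vU}^{}_{ij}(B^{}_{\varepsilon}(k))\bigr)\big/\lvert B^{}_{\varepsilon}(k)\rvert=\bigl(h^{}_{ij}(k)\bigr)$ for a.e.\ $k$, the pure point and singular continuous parts dropping out in the limit. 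Since Hermitian positive semi-definiteness survives division by a positive scalar and passage to a limit, $\bigl(h^{}_{ij}(k)\bigr)$ is Hermitian and positive semi-definite a.e. Write $H(k):=\bigl(h^{}_{ij}(k)\bigr)$ from now on.

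For the rank bound, the key is to iterate the first line of Eq.~\eqref{eq:iter-1}, which gives, for a.e.\ $k$ and all $n$,
\[
   H\bigl(k/\lambda^{n}\bigr) \, = \, \lambda^{-n}\, P^{}_{n}(k)\, H(k)\, P^{}_{n}(k)^{\dagger},
   \qquad P^{}_{n}(k) \, := \, B\bigl(k/\lambda^{n}\bigr)\cdots B\bigl(k/\lambda\bigr) .
\]
Because $B(k/\lambda^{j})$ converges geometrically to $B(0)=M$, a non-negative matrix with simple Perron--Frobenius eigenvalue $\lambda$ and eigenvector $v^{}_{\mathrm{PF}}$, the argument behind Proposition~\ref{prop:explode} (applied to the Kronecker square) shows that, for a.e.\ $k$, the normalised products $\lambda^{-n}P^{}_{n}(k)$ converge to a \emph{non-zero} matrix of rank one with column space $\CC\, v^{}_{\mathrm{PF}}$; hence $\lambda^{-2n}P^{}_{n}(k)^{\dagger}P^{}_{n}(k)\to v(k)\,v(k)^{\dagger}$ for some $v(k)\ne 0$. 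Taking traces in the displayed identity and using cyclicity, the non-negative functions $\psi^{}_{n}(k):=\lambda^{-n}\operatorname{tr}H\bigl(k/\lambda^{n}\bigr)=\bigl\langle H(k),\,\lambda^{-2n}P^{}_{n}(k)^{\dagger}P^{}_{n}(k)\bigr\rangle$ (Frobenius inner product) converge a.e.\ to $\psi(k):=\langle v(k),H(k)\,v(k)\rangle\ge 0$. Now bring in local integrability: $\operatorname{tr}H=h^{}_{00}+h^{}_{11}$ is the Radon--Nikodym density of the translation bounded positive measure $\widehat{\vU}^{}_{00}+\widehat{\vU}^{}_{11}$, hence lies in $L^{1}_{\mathrm{loc}}(\RR)$. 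Decomposing $(0,1)=\bigcup_{n\geqslant 1}[\lambda^{-n},\lambda^{1-n})$ and substituting $t=k/\lambda^{n}$ yields
\[
   \infty \, > \int_{(0,1)}\operatorname{tr}H(t)\dd t \, = \, \sum_{n\geqslant 1}\int_{[1,\lambda)}\psi^{}_{n}(k)\dd k \, ,
\]
so $\int_{[1,\lambda)}\psi^{}_{n}\to 0$, and Fatou's lemma forces $\psi\equiv 0$ a.e.\ on $[1,\lambda)$. As $H(k)\succeq 0$ and $v(k)\ne 0$, the vanishing $\langle v(k),H(k)\,v(k)\rangle=0$ implies $H(k)\,v(k)=0$, so $H(k)$ is singular and $\operatorname{rank}H(k)\leqslant 1$ for a.e.\ $k\in[1,\lambda)$. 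Finally, since $H(k/\lambda)=\lambda^{-1}B(k/\lambda)H(k)B(k/\lambda)^{\dagger}$ and, for $k\notin Z^{}_{m}$, $H(\lambda k)=\lambda\,B(k)^{-1}H(k)\bigl(B(k)^{\dagger}\bigr)^{-1}$ preserve the rank, the bound propagates to a.e.\ $k>0$, and then to a.e.\ $k\in\RR$ via the symmetry $\nu^{}_{ij}(-z)=\nu^{}_{ji}(z)$, i.e.\ $H(-k)=H(k)^{T}$.

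The main obstacle is the step asserting that $\lambda^{-2n}P^{}_{n}(k)^{\dagger}P^{}_{n}(k)$ converges to a \emph{non-zero} rank-one matrix for a.e.\ $k$: this is precisely the statement that the product of the geometrically convergent Fourier matrices grows at the Perron--Frobenius rate $\lambda^{n}$ and aligns with $v^{}_{\mathrm{PF}}$, i.e.\ the content of Proposition~\ref{prop:explode} for the Kronecker square. Care is needed because $k$ runs over a bounded set whose exceptional points --- those with $k/\lambda^{j}\in Z^{}_{m}$ for some $j$, where $B(k/\lambda^{j})$ degenerates --- form a dense but null subset; one therefore argues for a.e.\ $k$, using that $P^{}_{n}(k)$ is then invertible, and absorbs the finitely many initial factors whose arguments exceed the $\varepsilon$ of Proposition~\ref{prop:explode} into a fixed invertible matrix that does not affect the conclusion.
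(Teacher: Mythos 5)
Your proof is correct, and for the rank bound it takes a genuinely different route from the paper's. The paper obtains Hermiticity and positive semi-definiteness the same way you do (Fact~\ref{fact:Hermitian} plus differentiation of measures), but then splits $H$ uniquely as a rank-$\le 1$ positive semi-definite matrix plus $a''\bigl(\begin{smallmatrix}1&0\\0&0\end{smallmatrix}\bigr)$ with $a''=\det H/h^{}_{11}$, identifies the second summand with the vector $(1,0,0,0)^{T}\in W_{\!+}$, and uses Proposition~\ref{prop:explode} together with a Lusin-theorem argument to show that $a''>0$ on a set of positive measure would make $h''_{00}(k)$ grow like $k^{-1}$ as $k\searrow 0$, contradicting local integrability; positivity of the cone iteration is invoked to rule out cancellation with the rank-one summand. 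You instead take the trace of the iterated identity $H(k/\lambda^{n})=\lambda^{-n}P_{n}(k)H(k)P_{n}(k)^{\dagger}$, convert $\int_{(0,1)}\operatorname{tr}H$ into the exact series $\sum_{n}\int_{[1,\lambda)}\psi^{}_{n}$, and let Fatou force the Frobenius pairing of $H(k)$ with the nonzero positive semi-definite limit of $\lambda^{-2n}P_{n}^{\dagger}P_{n}$ to vanish, whence $H(k)$ is singular. This is arguably cleaner --- it needs no explicit decomposition, no Lusin argument, and no separate no-cancellation step (the trace is automatically non-negative) --- but it does require slightly more than Proposition~\ref{prop:explode} literally states, namely that the normalised products $\lambda^{-n}P_{n}(k)$ stay bounded above and below in norm and converge (at least subsequentially) to a nonzero limit; you correctly flag this as the crux, and it does follow from the geometric convergence $B(k/\lambda^{j})\to M$ combined with the growth estimate of Proposition~\ref{prop:explode}, with the finitely many initial factors outside $[0,\varepsilon]$ absorbed as you indicate. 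Note also that your argument only needs the limit to be a nonzero positive semi-definite matrix $S$ (then $\langle H,S\rangle=0$ with both positive semi-definite already gives $HS=0$), so the rank-one alignment with $v^{}_{\mathrm{PF}}$ is not essential. Both proofs ultimately exploit the same clash between the Perron--Frobenius growth rate $\lambda^{n}$ of the cocycle, the renormalisation prefactor $\lambda^{-n}$, and local integrability of the Radon--Nikodym density.
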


\begin{proof}
  From Fact~\ref{fact:Hermitian}, we know that, given any bounded
  Borel set $\cE$, the complex matrix
  $\bigl( \widehat{\vU}^{}_{ij} (\cE) \bigr)$ is Hermitian and
  positive semi-definite, which also holds for the absolutely
  continuous part of $\widehat{\vU}$.  By standard arguments, this
  implies the first claim.

For a.e.\ $k\in\RR$, the Radon--Nikodym matrix is then of the
form $H=\left( \begin{smallmatrix} a & b+\ii c \\ b - \ii c & d
  \end{smallmatrix} \right)$
with $a, b, c, d \in \RR$, $a, d \geqslant 0$ and
$ad \geqslant (b^2 + c^2)\geqslant 0$. When $\det (H) = 0$,
the rank of $H$ is at most $1$.
On the other hand, when $\det (H) >0$, the rank of $H$ is $2$,
with $ad > 0$.  In general, we have a unique decomposition as
\begin{equation}\label{eq:H-split}
     H \, = \, \begin{pmatrix} a' & b+\ii c \\ 
        b - \ii c & d \end{pmatrix}
     + a'' \begin{pmatrix} 1 & 0 \\ 0 & 0 \end{pmatrix}
\end{equation}
with $ a = a' + a''$ such that each matrix on the right-hand side
is positive semi-definite (hence $a' \geqslant 0$ and
$a''\geqslant 0$) and of rank at most $1$ (which means
$a'd = b^2 + c^2 \ts $).

It suffices to prove our second claim for a.e.\
$k\in \bigl[ \frac{\varepsilon}{\lambda}, \varepsilon \bigr]$ for some
$\varepsilon > 0$, as the two iterations in Eq.~\eqref{eq:iter-1}
transport the property to all $k>0$, and then to $k<0$ via
$h^{}_{ij} (-k) = h^{}_{ji} (k)$. Here, we choose $\varepsilon$ as in
Proposition~\ref{prop:explode}. Whenever $h^{}_{11} (k) = 0$, we have
$\det \bigl( h^{}_{ij} (k) \bigr) = 0$ and the Radon--Nikodym matrix
has rank at most $1$. Otherwise, we define
$h^{\prime \prime}_{00} (k) = \det\bigl( h^{}_{ij} (k) \bigr)/
h^{}_{11} (k)$
and
$h^{\prime}_{00} (k) = h^{}_{00} (k) - h^{\prime \prime}_{00} (k)$,
which are measurable functions and achieve the decomposition explained
previously.

To continue, we switch to the vector notation from
Eqs.~\eqref{eq:dens-reno} and \eqref{eq:A-outward}, and observe that
the matrix
$\left( \begin{smallmatrix} 1 & 0 \\ 0 & 0 \end{smallmatrix} \right)$
corresponds to the vector $w^{}_{0} = (1,0,0,0)^T \in W_{\! +}$ in the
notation of Section~\ref{sec:Kronecker}. Now,
$U \ts w^{}_{0} = \frac{1-\ii}{\sqrt{2}}\, w^{}_{0}$, and the
iteration of $w^{}_{0} $ under $A^{}_{U} (k)$,
\[
     w^{}_{n} \, = \, A^{}_{U} \bigl( \tfrac{k}{\lambda^{n-1}} 
     \bigr) \cdots A^{}_{U} \bigl( \tfrac{k}{\lambda} \bigr) 
     \ts A^{}_{U} (k) \, w^{}_{0} \ts ,
\] 
grows asymptotically as $c \lambda^{2n} w^{}_{\mathrm{PF}}$ by an
application of Proposition~\ref{prop:explode}, where $c>0$ depends on
$k$. Observing that
$U^{-1} \ts w^{}_{\mathrm{PF}} = \frac{1+\ii}{\sqrt{2}} \,
w^{}_{\mathrm{PF}}$
and applying a standard argument on the basis of Lusin's theorem as in
the proof of \cite[Lemma~6.5]{BFGR}, we see that
$h^{\prime \prime}_{00} (k) > 0$ would behave proportional to
$k^{-1}$ as $k \, {\scriptstyle \searrow} \, 0$, which is impossible
for a locally integrable function.  Since also
$h^{\prime}_{00} (k) \geqslant 0$, there cannot be any cancellation
with the other term of our decomposition under the inward iteration,
and we must conclude that $h^{\prime \prime}_{00} (k) =0$ for a.e.\
$k \in \bigl[ \frac{\varepsilon}{\lambda}, \varepsilon \bigr]$, and
hence for a.e.\ $k\in\RR$ as argued above.  This implies our claim.
\end{proof}

\subsection{Dimensional reduction and Lyapunov exponents}

If $H\in \Mat (2,\CC)$ is Hermitian, positive semi-definite and of
rank at most $1$, there are two complex numbers, $v^{}_{0}$
and $v^{}_{1}$ say, such that $H^{}_{ij} = v^{}_{i} \,
\overline{v^{}_{j}}$ holds for $i,j \in \{ 0,1\}$; compare
\cite[Fact~6.6]{BFGR} for more.
The main consequence of Lemma~\ref{lem:decompose} now is
that it suffices to consider a vector  $v(k) =
\bigl( v^{}_{0} (k) , v^{}_{1} (k) \bigr)^T$ of functions from
$L^{2}_{\mathrm{loc}} (\RR)$ under the simpler iterations
\[
   v \bigl( \tfrac{k}{\lambda} \bigr) \, = \,
    \myfrac{1}{\sqrt{\lambda}} \, B \bigl( \tfrac{k}{\lambda}
    \bigr) \ts v (k)
   \quad \text{and} \quad
   v(\lambda k) \, = \, \sqrt{\lambda} \, B^{-1} (k) \, v (k) \ts ,
\]
the latter for $k \notin Z_m$. In particular, one has
\begin{equation}\label{eq:outward-iter}
   v(\lambda^n k) \, = \, \lambda^{n/2} \,
   B^{-1} (\lambda^{n-1} k) \cdots B^{-1} (k) \, v (k) \ts ,
\end{equation}
which holds for a.e.\ $k\in \RR \setminus \bigcup_{\ell=0}^{n-1}
\lambda^{-\ell} Z_m$, and thus still for a.e.\ $k\in\RR$.

There are at most two Lyapunov exponents for this iteration,
which agree with the extremal exponents \cite{Viana} defined
by
\begin{equation}\label{eq:extremal}
\begin{split}
   \chi^{}_{\max} (k) \, & = \, \log \sqrt{\lambda} 
    \, + \,
    \limsup_{n\to\infty} \myfrac{1}{n} \log \bigl\|
    B^{-1} (\lambda^{n-1} k) \cdots B^{-1} (\lambda k)
    \ts B^{-1} (k) \bigr\|
    \quad \text{and} \\[2mm]
   \chi^{}_{\min} (k) \, & = \, \log\sqrt{\lambda} 
    \, - \,
    \limsup_{n\to\infty} \myfrac{1}{n} \log \bigl\|
    B(k) \ts B(\lambda k) \cdots B(\lambda^{n-1} k) \bigr\| ,
\end{split}
\end{equation}
where the term $\log\sqrt{\lambda}$ emerges from the prefactor on the
right-hand side of Eq.~\eqref{eq:outward-iter}. Our main concern
will be the minimal exponent, for the following reason \cite{BFGR,BGM}.

\begin{lemma}\label{lem:cond-no-ac}
  If\/ $\chi^{}_{\min} (k) \geqslant c > 0$ holds for a.e.\ $k$ in a
  small interval, the diffraction measure\/ $\widehat{\gamma^{}_{u}}$
  is a singular measure, for any non-trivial choice of the weights\/
  $u^{}_{0}$ and\/ $u^{}_{1}$, which means\/
  $u^{}_{1}, u^{}_{2} \in \CC$ with\/ $u^{}_{1} u^{}_{2} \ne 0$.  \qed
\end{lemma}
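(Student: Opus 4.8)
The plan is to show that the hypothesis forces the vector $v = (v^{}_0, v^{}_1)^{T} \in L^2_{\mathrm{loc}}(\RR)^2$ supplied by Lemma~\ref{lem:decompose} --- the one encoding the rank-one absolutely continuous part via $h^{}_{ij}(k) = v^{}_i(k)\,\overline{v^{}_j(k)}$ --- to vanish for a.e.\ $k$. This immediately gives $(\widehat{\vU})^{}_{\mathsf{ac}} = 0$, and hence $(\widehat{\gamma^{}_u})^{}_{\mathsf{ac}} = 0$ for \emph{any} admissible weights, since $\widehat{\gamma^{}_u} = \dens(\vL) \sum^{}_{ij} \overline{u^{}_i}\, \widehat{\vU}^{}_{\nts ij}\, u^{}_j$ by Eq.~\eqref{eq:q-form}, so the diffraction inherits singularity from $\widehat{\vU}$. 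Two global facts will be used. First, the diagonal measures $\widehat{\vU}^{}_{00}$ and $\widehat{\vU}^{}_{\nts 11}$ are translation-bounded positive measures (being, up to normalisation, the diffraction measures of $\vL^{(0)}$ and $\vL^{(1)}$), and the a.c.\ density of $\widehat{\vU}^{}_{00} + \widehat{\vU}^{}_{\nts 11}$ is exactly $\lVert v(k) \rVert^2 = \lvert v^{}_0(k)\rvert^2 + \lvert v^{}_1(k)\rvert^2$; hence $\int_0^R \lVert v \rVert^2 \dd k \leqslant C\,(R+1)$ for all $R > 0$. Second, the extremal exponents of Eq.~\eqref{eq:extremal} are a.e.\ constant, by the Furstenberg--Kesten theorem for the cocycle generated by $B(\cdot)^{\pm 1}$ over the ergodic toral endomorphism $M$ on $\TT^2$ (ergodicity holds in all non-PV cases, and the required log-integrability of $B^{-1}$ follows from $\int_0^1 \log \lvert p(k) \rvert \dd k = 0$, the logarithmic Mahler measure of $1 + z + \dots + z^{m-1}$ being zero). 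Thus the local hypothesis $\chi^{}_{\min}(k) \geqslant c$ upgrades to $\chi^{}_{\min}(k) \geqslant c$ for a.e.\ $k \in \RR$, and we may choose the interval of work freely.

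The core is a proof by contradiction. Suppose $v$ is not a.e.\ zero, and fix a bounded interval $I \subset (0,\infty)$ with $\int_I \lVert v \rVert^2 > 0$. The bound $\chi^{}_{\min}(k) \geqslant c$ means $\limsup_{n} \tfrac1n \log \lVert B(k) B(\lambda k) \cdots B(\lambda^{n-1} k) \rVert \leqslant \log\sqrt{\lambda} - c$ for a.e.\ $k \in I$. Applying Egorov's theorem to $g^{}_n(k) := \sup_{m \geqslant n} \tfrac1m \log \lVert B(k) \cdots B(\lambda^{m-1}k) \rVert$, which decreases a.e.\ on $I$ to a limit $\leqslant \log\sqrt{\lambda} - c$, I obtain a subset $I' \subseteq I$ with $\int_{I'} \lVert v \rVert^2 > 0$ and an $N$ such that $\lVert B(k) \cdots B(\lambda^{n-1}k) \rVert \leqslant \ee^{\,n(\log\sqrt{\lambda} - c/2)}$ for all $k \in I'$ and $n \geqslant N$. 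Rewriting the outward iteration of Eq.~\eqref{eq:outward-iter} as $v(\lambda^n k) = \lambda^{n/2}\bigl( B(k) \cdots B(\lambda^{n-1}k)\bigr)^{-1} v(k)$ and using $\lVert P^{-1} w \rVert \geqslant \lVert w \rVert / \lVert P \rVert$, this yields $\lVert v(\lambda^n k) \rVert \geqslant \ee^{\,nc/2}\, \lVert v(k) \rVert$ for $k \in I'$ and $n \geqslant N$. Squaring, integrating over $I'$ and substituting $t = \lambda^n k$ gives
\[
   C\,\bigl( \sup I + \lambda^{-n} \bigr) \,\geqslant\, \lambda^{-n} \int_{\lambda^n I'} \lVert v(t) \rVert^2 \dd t \,=\, \int_{I'} \lVert v(\lambda^n k) \rVert^2 \dd k \,\geqslant\, \ee^{\,nc} \int_{I'} \lVert v(k) \rVert^2 \dd k \ts ,
\]
where the first inequality uses $\lambda^n I' \subset (0, \lambda^n \sup I)$ together with the translation-bounded growth bound. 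Letting $n \to \infty$, the left-hand side stays bounded while the right-hand side diverges, a contradiction; hence $\int_{I'} \lVert v \rVert^2 = 0$. Exhausting $I$ by such Egorov sets forces $v = 0$ a.e.\ on $I$, and as $\chi^{}_{\min} \geqslant c$ a.e.\ on $\RR$, the same reasoning applies on every bounded interval, so $v = 0$ a.e.\ on $\RR$ (one could alternatively propagate via the iterations of Eq.~\eqref{eq:iter-1} and $h^{}_{ij}(-k) = \overline{h^{}_{ij}(k)}$). This gives the claim.

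The main obstacle is the uniformisation step: converting the pointwise, a priori non-uniform bound $\chi^{}_{\min}(k) \geqslant c$ into a genuinely uniform exponential estimate on a set of positive measure, in such a way that the resulting clash with translation-boundedness --- exponential versus linear growth of $\int \lVert v \rVert^2$ --- is not washed out by constants or by the rescaling. This is where the a.e.\ constancy of the exponents and the Egorov (or Lusin) argument have to be dovetailed carefully; the rank-one reduction, the translation-bounded bound, and the change of variables are then routine.
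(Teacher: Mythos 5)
Your core mechanism is correct and is essentially the one the paper intends (the lemma is stated with a reference to \cite{BFGR,BGM} rather than proved in the text): reduce the absolutely continuous part to the rank-one vector $v$ via Lemma~\ref{lem:decompose}, push $v$ outward with Eq.~\eqref{eq:outward-iter}, use $\lVert P^{-1}w\rVert \geqslant \lVert w\rVert/\lVert P\rVert$ together with an Egorov uniformisation of $\chi^{}_{\min}\geqslant c$ to get $\lVert v(\lambda^n k)\rVert \geqslant \ee^{nc/2}\lVert v(k)\rVert$ on a set of positive measure, and collide the resulting exponential growth of $\int\lVert v\rVert^2$ with the at-most-linear growth forced by translation boundedness of the positive measures $\widehat{\vU}^{}_{00}+\widehat{\vU}^{}_{\nts 11}$. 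The change of variables and the bookkeeping of the $\lambda^{n/2}$ prefactors are right, and the phase ambiguity in writing $h^{}_{ij}=v^{}_i\overline{v^{}_j}$ is harmless since you only use norms. This matches the $k^{-1}$-versus-local-integrability clash already visible in the paper's proof of Lemma~\ref{lem:decompose}.

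The one step that does not hold up as written is the globalisation: you claim that $\chi^{}_{\min}$ is a.e.\ constant on $\RR$ by Furstenberg--Kesten for the cocycle over the toral endomorphism $M$ on $\TT^2$. The ergodic theorem on $\TT^2$ controls Haar-a.e.\ initial conditions, but the points relevant here, $(x,y)=(\lambda k,k)$, form a line of Haar measure zero; the paper explicitly flags this obstruction in Section~\ref{sec:Fibo} and only resolves it there using the PV property of $\tau$, which is unavailable in precisely the non-PV cases where this lemma carries the load. Fortunately the claim is also unnecessary. The elementary identity $\chi^{}_{\min}(\lambda k)=\chi^{}_{\min}(k)$ for a.e.\ $k$ (multiplying or stripping the single bounded, a.e.-invertible factor $B(k)$ does not change the $\limsup$ of $\frac{1}{n}\log\lVert\cdot\rVert$) propagates the hypothesis from an interval of the form $[\varepsilon/\lambda,\varepsilon]$ --- which is the interval the paper has in mind, cf.\ the proof of Lemma~\ref{lem:decompose} --- to all of $(0,\infty)$, and $h^{}_{ij}(-k)=h^{}_{ji}(k)$ handles negative $k$; alternatively, run your contradiction once on $[\varepsilon/\lambda,\varepsilon]$ and propagate the conclusion $v=0$ through the a.e.-invertible iterations of Eq.~\eqref{eq:iter-1}, whose dilates $\lambda^n[\varepsilon/\lambda,\varepsilon]$ tile $(0,\infty)$. (For a genuinely arbitrary small interval $[a,b]$ with $b/a<\lambda$ neither propagation covers $\RR$, so the lemma should be read with an interval anchored as above.) With that repair, your proof is complete.
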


\section{Analysis via Lyapunov exponents}\label{sec:Lyapunov}

Let $m\in\NN$ be fixed, and $\lambda = \lambda^{+}_{m}$ as before.
Consider the matrix cocycle
\[
   B^{(n)} (k) \, := \, B(k) \ts B(\lambda k) 
   \cdots B(\lambda^{n-1} k) \ts ,
\] 
which is motivated by Eq.~\eqref{eq:extremal}. Note that $B^{(n)} (k)$
is invertible for
$k \notin \bigcup_{\ell=0}^{n-1} \lambda^{-\ell} Z_m$.  When $m=1$, one
has $Z^{}_{1} = \varnothing$ and
$\bigl| \det \bigl(B(k)\bigr) \bigr| \equiv 1$, which makes this case
considerably simpler. In general, one has the following result.

\begin{prop}\label{prop:det-lim}
   For a.e.\ $k\in\RR$, one has 
    $ \,\lim_{n\to\infty} \,  \frac{1}{n} \log \ts \bigl| 
    \det \bigl(B^{(n)} (k) \bigr) \bigr|  =  0$.
\end{prop}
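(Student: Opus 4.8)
The plan is to reduce the statement to an ergodic-theoretic computation on the torus $\TT^2$, exploiting the skew-product structure behind Eq.~\eqref{eq:correspond}. First I would observe that $\det\bigl(B^{(n)}(k)\bigr) = \prod_{\ell=0}^{n-1} \det\bigl(B(\lambda^{\ell} k)\bigr) = (-1)^n \prod_{\ell=0}^{n-1} p(\lambda^{\ell} k)$, so that
\[
   \frac{1}{n} \log \bigl| \det\bigl(B^{(n)}(k)\bigr) \bigr|
   \, = \, \frac{1}{n} \sum_{\ell=0}^{n-1} \log \bigl| p(\lambda^{\ell} k) \bigr| .
\]
Thus the claim is a Birkhoff-type statement: the ergodic averages of the function $k \mapsto \log|p(k)|$ along the orbit $\{\lambda^{\ell} k\}$ converge to $0$ for a.e.\ $k$. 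Using the quasiperiodic lift from Eqs.~\eqref{eq:quasi-B}--\eqref{eq:correspond}, I would write $\log|p(k)| = g\bigl(\lambda k, k\bigr)$ with $g(x,y) = \log\bigl| 1 + z + \dots + z^{m-1}\bigr|$ evaluated at $z = \ee^{2\pi\ii y}$ (the factor $\ee^{2\pi\ii x}$ has modulus $1$); note $g$ in fact depends only on $y$. Under the correspondence $k \mapsto \lambda k \leftrightarrow (x,y)\mapsto (x,y)M$, the orbit sum above becomes a Birkhoff sum for the toral endomorphism $E_M \colon \TT^2 \to \TT^2$ induced by $M = M_m$, evaluated along the orbit of the point $(\lambda k, k)$, for the observable $\Phi(x,y) := g(x,y) = \log\bigl|\tfrac{z^m - 1}{z - 1}\bigr|$, $z = \ee^{2\pi\ii y}$ (with the understanding that $p\neq 0$ a.e., so this is finite a.e.).

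The next step is to apply the Birkhoff ergodic theorem. Since $|\det M| = |{-m}| = m \geqslant 1$, when $\lambda$ is irrational (the only genuinely quasiperiodic case, which is what we need — for integer $\lambda$ the situation is $1$-periodic and handled separately, cf.\ Fact~\ref{fact:integer} and the remark after Eq.~\eqref{eq:quasi-p}) the endomorphism $E_M$ is ergodic with respect to Haar measure on $\TT^2$; this is the standard criterion that a toral endomorphism is ergodic iff no eigenvalue of $M$ is a root of unity, which holds here because $\lambda^{\pm}_m$ are irrational quadratic units. Moreover $\Phi \in L^1(\TT^2)$: the only singularities of $\log|p|$ come from the zeros of $z^m - 1$, i.e.\ from $y \in \tfrac{1}{m}\ZZ$, and near such a point $\log|p|$ has a logarithmic singularity, which is integrable. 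Therefore, for Haar-a.e.\ $(x,y)$,
\[
   \lim_{n\to\infty} \frac{1}{n} \sum_{\ell=0}^{n-1} \Phi\bigl(E_M^{\ell}(x,y)\bigr)
   \, = \, \int_{\TT^2} \Phi \dd (x,y)
   \, = \, \int_0^1 \log\Bigl| \tfrac{\ee^{2\pi\ii m y} - 1}{\ee^{2\pi\ii y} - 1}\Bigr| \dd y
   \, = \, 0 ,
\]
the last equality because $\int_0^1 \log|\ee^{2\pi\ii n y} - 1|\dd y = 0$ for every $n \geqslant 1$ (the logarithmic Mahler measure of $z^n - 1$ vanishes, by Jensen's formula). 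It remains to transfer the Haar-a.e.\ conclusion on $\TT^2$ back to a Lebesgue-a.e.\ conclusion in $k \in \RR$: the map $k \mapsto (\lambda k, k) \bmod \ZZ^2$ sends Lebesgue measure on $\RR$ (locally) to a measure on $\TT^2$ that is absolutely continuous with respect to Haar measure — indeed, the image of Lebesgue measure on a unit $k$-interval under $k \mapsto (\lambda k \bmod 1, k \bmod 1)$ is supported on the line of irrational slope $1/\lambda$ and is \emph{singular}, not absolutely continuous, which is the subtle point.

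So the main obstacle is precisely this last transfer: a plain Birkhoff theorem on $(\TT^2, \text{Haar})$ gives the conclusion only for Haar-a.e.\ starting point, whereas we start on a single irrational line, a null set. The clean way around this is \emph{not} to push forward to $\TT^2$ at all, but to recognise the orbit $k \mapsto \lambda k$ directly as generated by a measure-preserving transformation of $\RR$ (or of a fundamental domain) that is ergodic with respect to Lebesgue measure, and to which a one-dimensional Birkhoff theorem applies — equivalently, to invoke a suspension/Anzai-skew-product argument so that the relevant system is $(\TT^2, E_M, \text{Haar})$ with the starting line being a typical fibre of an ergodic extension, or to use unique ergodicity of the relevant subshift/solenoid to upgrade "Haar-a.e." to "every point". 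Concretely, I would follow the route of \cite{BFGR} (this is exactly the analogue of their Prop.~4.6 / the $m=3$ computation): realise the dynamics on the $\lambda$-solenoid $\XX_\lambda$, on which multiplication by $\lambda$ is an automorphism that is ergodic (indeed uniquely ergodic on the relevant minimal component) w.r.t.\ the natural Haar measure, whose disintegration along $\RR$-fibres is Lebesgue; the observable $\log|p|$ lifts to an $L^1$ function with integral $0$ by the Mahler-measure computation above, and Birkhoff on the solenoid then yields the claim for a.e.\ $k \in \RR$. The only things one must check carefully are the $L^1$ integrability of the lifted observable across the zero set $Z_m$ (done above via the logarithmic singularity) and that the exceptional Lebesgue-null set of bad $k$ can be taken uniformly, which follows since countably many dilates $\lambda^{-\ell} Z_m$ still form a null set.
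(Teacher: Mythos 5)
Your reduction is sound as far as it goes: the factorisation of $\det\bigl(B^{(n)}(k)\bigr)$ into $\prod_{\ell}p(\lambda^{\ell}k)$, the local integrability of $\log|p|$ across its logarithmic singularities at $Z_m$, and the evaluation of the mean as the logarithmic Mahler measure of $1+z+\cdots+z^{m-1}$ (which vanishes by Jensen) all match what the paper does. You also correctly identify the crux: for non-integer $\lambda$ the orbit $(\lambda^{n}k)$ lives on the line $\RR(\lambda,1)$, a Haar-null set of $\TT^2$, so Birkhoff's theorem for the endomorphism $E_M$ says nothing about a.e.\ $k\in\RR$.

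However, none of the three escapes you offer actually closes this gap. There is no finite measure on $\RR$ (or a fundamental domain) that is invariant and ergodic for $x\mapsto\lambda x$ and equivalent to Lebesgue, so the ``one-dimensional Birkhoff'' route is vacuous. The solenoid route inherits the same defect: multiplication by $\lambda$ on the relevant compact group is ergodic for Haar measure, but the $\RR$-leaf through the identity is again Haar-null, so ``Haar-a.e.'' does not restrict to ``Lebesgue-a.e.\ $k$''; and unique ergodicity is unavailable (these are hyperbolic-type maps with many invariant measures) and would in any case not handle the unbounded observable $\log|p|$. Note that the PV-based transfer used in Section~\ref{sec:Fibo} for $m=1$ relies on orbits converging exponentially to the line $\RR(\tau,1)$ and is exactly what fails in the non-PV cases. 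The paper's actual mechanism is different and genuinely needed: for a.e.\ $k$ the sequence $(\lambda^{n}k)$ is uniformly distributed modulo $1$ (a classical metric result), one has the a.e.\ Diophantine bound $\mathrm{dist}(\lambda^{n-1}k,Z_m)\geqslant n^{-(1+\varepsilon)}$ for almost all $n$ together with a discrepancy estimate of order $(\log N)^{3/2+\varepsilon}/\sqrt{N}$, and then Sobol's theorem (in the almost periodic extension of \cite{BHL}) shows that the averages of the singular, locally Riemann-integrable function $\log|\det B(\cdot)|$ along such a sequence converge to its mean. You should replace your transfer step by this quantitative equidistribution argument; the rest of your computation can stand.
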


\begin{proof}
  For $m=1$, one has
  $\bigl| \det \bigl( B^{(n)} (k) \bigr) \bigr| \equiv 1$, and the
  claim trivially holds for all $k\in\RR$.  When $m\geqslant 2$, we
  invoke Sobol's theorem, as outlined in \cite{BFGR}; see \cite{BHL}
  for a detailed exposition. Clearly, $\det\bigl( B (k))$ is a Bohr
  almost periodic function, but it has zeros for $k\in
  Z_m$.
  Consequently, $\log \ts \bigl| \det\bigl( B(k)\bigr) \bigr|$ cannot
  be Bohr almost periodic, because it has singularities at these
  points. Nevertheless, this function is locally Lebesgue-integrable
  on $\RR$, and is continuous on $\RR\setminus Z_m$, hence locally
  Riemann-integrable on the complement of
  $Z_m + (-\varepsilon, \varepsilon)$ for any $\varepsilon>0$.  Then,
  Sobol's theorem \cite{Sobol} (in the periodic case where $\lambda$
  is an integer) or its extension to almost periodic functions
  \cite{BHL} can be applied as follows.

  First, recall that the sequence
  $(\lambda^n k)^{}_{n\in\NN}$ is uniformly distributed modulo $1$ for
  a.e.\ $k\in\RR$. Next, one needs the  property that
  $Z_m$ is a Delone set and that, for any fixed $\varepsilon > 0$,
  and then for a.e.\ $k\in\RR$, the inequality
\[
   \mathrm{dist} (\lambda^{n-1} k, Z_m) \, \geqslant \, 
   \myfrac{1}{n^{1+\varepsilon}}
\]
holds for almost all $n\in\NN$ (meaning for all except at most
finitely many); see \cite[Lemma~6.2.6]{BHL}. Then, again for any fixed
$\varepsilon > 0$, it follows from \cite[Thm.~5.13]{Harman} that the
discrepancy of $(\lambda^n k)^{}_{n\in\NN}$, for a.e.\ $k\in\RR$, is
\[
     \cD^{}_{N} \, = \, \cO \left(
     \frac{\bigl(\log(N)\bigr)^{\frac{3}{2} + 
    \varepsilon}}{\sqrt{N}} \right)
    \quad \text{as } N \to \infty \ts .
\]

Putting this together, we can apply \cite[Thm.~6.4.8]{BHL} which tells
us that the Birkhoff-type averages of the function
$\log \ts \bigl| \det \bigl( B(.) \bigr) \bigr|$, for a.e.\ $k\in\RR$,
converge to the mean of this function (see Eq.~\eqref{eq:def-mean} 
below for a definition), which gives
\[
\begin{split}
    \myfrac{1}{n}  \log \ts & \bigl| 
    \det \bigl( B^{(n)} (k) \bigr) \bigr|  \,  =  \,
    \myfrac{1}{n} \sum_{\ell=0}^{n-1}   
    \log \ts \bigl| \det \bigl( B ( \lambda^{\ell} k) \bigr)
    \bigr|  \\[2mm]
    & \xrightarrow{\, n\to\infty\,} \int_{0}^{1}\!
    \log \ts \bigl| \det \bigl( B(t) \bigr) \bigr| \dd t
    \, =  \int_{0}^{1} \! \log \ts \bigl| 1 + z + \ldots + z^{m-1} 
    \bigr|_{z=\ee^{2 \pi \ii t}} \dd t   \; = \; 0 \ts ,
\end{split}
\]
where the last step follows via Jensen's formula from complex analysis
(see \cite[Prop.~16.1]{K-book} for a formulation that fits our
situation) because the polynomial $1+z+\ldots +z^{m-1}$ either equals
$1$ (when $m=1$) or has zeros only on the unit circle.
\end{proof}

\begin{remark}
  When $m=\ell (\ell+1)$ with $\ell\in\NN$, where
  $\lambda = \ell + 1$, the result of Proposition~\ref{prop:det-lim}
  easily follows from Birkhoff's ergodic theorem, because
  $\det \bigl( B(k)\bigr)$ is then a $1$-periodic, locally
  Lebesgue-integrable function that gets averaged along orbits of the
  dynamical system defined by $x \mapsto \lambda x \bmod{1}$ on the
  $1$-torus, $\TT$. This approach, however, does not extend to the
  other values of $m$ with $m>1$, because the sequence
  $(\lambda^n k)^{}_{n\in\NN}$, taken modulo $1$, is then no longer an
  orbit of $x \mapsto \lambda x \bmod{1}$ on $\TT$; compare
  \cite[Ex.~6.3.4]{BHL}.  \exend
\end{remark}

We can now relate the two extremal exponents from
Eq.~\eqref{eq:extremal} as follows.

\begin{lemma}\label{lem:expo-sum}
  For a.e.\ $k\in\RR$, one has\/ $\,
  \chi^{}_{\max} (k) + \chi^{}_{\min} 
  (k) = \log(\lambda)$.
\end{lemma}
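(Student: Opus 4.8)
The plan is to exploit the two facts already in hand: the relation $\chi^{}_{\min}(k) = \log\sqrt{\lambda} - \limsup_n \frac{1}{n}\log\|B^{(n)}(k)\|$ from Eq.~\eqref{eq:extremal}, and the analogous expression for $\chi^{}_{\max}(k)$ in terms of the inverse cocycle. The key observation is that for a $2\times 2$ matrix $P$ one has $P^{-1} = \frac{1}{\det P}\,\widehat{P}$, where $\widehat{P}$ is the adjugate (cofactor) matrix, and $\|\widehat{P}\| = \|P\|$ because the adjugate of a $2\times 2$ matrix is obtained by a transposition and sign changes that preserve the operator norm (indeed $\widehat{P} = J P^{T} J^{-1}$ up to sign, with $J$ the standard symplectic form, an isometry). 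Hence
\[
   \bigl\| \bigl(B^{(n)}(k)\bigr)^{-1} \bigr\|
   \, = \, \myfrac{\bigl\| \widehat{B^{(n)}(k)} \bigr\|}
                  {\bigl| \det B^{(n)}(k) \bigr|}
   \, = \, \myfrac{\bigl\| B^{(n)}(k) \bigr\|}
                  {\bigl| \det B^{(n)}(k) \bigr|} \ts .
\]

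First I would record this norm identity for $2\times 2$ matrices and note that it passes to the cocycle $B^{(n)}(k) = B(k)\cdots B(\lambda^{n-1}k)$, which is invertible for a.e.\ $k$ (outside the countable union $\bigcup_{\ell\geqslant 0} \lambda^{-\ell} Z_m$). Then I would take $\frac{1}{n}\log$ of both sides and pass to the $\limsup$ as $n\to\infty$: the left side contributes to $\chi^{}_{\max}(k) - \log\sqrt{\lambda}$ via the definition of $\chi^{}_{\max}$ (note that $B^{-1}(\lambda^{n-1}k)\cdots B^{-1}(k)$ and $\bigl(B(k)\cdots B(\lambda^{n-1}k)\bigr)^{-1} = B^{-1}(\lambda^{n-1}k)\cdots B^{-1}(k)$ coincide, so the order works out), the first term on the right contributes to $\log\sqrt{\lambda} - \chi^{}_{\min}(k)$, and the $\det$ term contributes, with a minus sign, $\lim_n \frac{1}{n}\log|\det B^{(n)}(k)|$, which equals $0$ for a.e.\ $k$ by Proposition~\ref{prop:det-lim}. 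Since that last limit exists (not merely as a $\limsup$), there is no subadditivity obstruction to splitting the $\limsup$: $\limsup_n (a_n + b_n) = \limsup_n a_n + \lim_n b_n$ when $\lim_n b_n$ exists. Rearranging gives $\chi^{}_{\max}(k) - \log\sqrt{\lambda} = \log\sqrt{\lambda} - \chi^{}_{\min}(k) - 0$, i.e.\ $\chi^{}_{\max}(k) + \chi^{}_{\min}(k) = 2\log\sqrt{\lambda} = \log(\lambda)$, valid for a.e.\ $k\in\RR$.

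The main obstacle — really the only point requiring care — is justifying that the $\limsup$ defining $\chi^{}_{\max}$ can indeed be combined cleanly with the determinant term, and that the adjugate norm identity is applied to the right product in the right order. For the first issue, the resolution is precisely that Proposition~\ref{prop:det-lim} gives an honest limit equal to $0$, so $\frac{1}{n}\log|\det B^{(n)}(k)| \to 0$ and can be absorbed without affecting the $\limsup$; equivalently, $\|\bigl(B^{(n)}(k)\bigr)^{-1}\|$ and $\|B^{(n)}(k)\|$ have the same exponential growth rate for a.e.\ $k$. For the second issue, one simply notes $\bigl(B(k)B(\lambda k)\cdots B(\lambda^{n-1}k)\bigr)^{-1} = B^{-1}(\lambda^{n-1}k)\cdots B^{-1}(\lambda k)B^{-1}(k)$, which is exactly the product appearing inside the norm in the formula for $\chi^{}_{\max}$. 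Everything else is the elementary linear algebra of $2\times 2$ adjugates together with the already-established Proposition~\ref{prop:det-lim}.
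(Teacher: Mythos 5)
Your argument is correct and is essentially the paper's own proof: both rest on the $2\times2$ adjugate identity $B^{-1}=\frac{1}{\det B}B^{\mathsf{ad}}$ with $\|B^{\mathsf{ad}}\|=\|B\|$ for a suitable norm, combined with Proposition~\ref{prop:det-lim} to kill the determinant term. You merely spell out the ``simple calculation'' the paper elides, including the correct reversal of the product order and the observation that the $\limsup$ splits because the determinant term converges as an honest limit.
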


\begin{proof}
  Recall that, for any invertible matrix $B$, one has
  $B^{-1} = \frac{1}{\det (B)} \, B^{\mathsf{ad}}$, where
  $B^{\mathsf{ad}}$ is the (classical) adjoint of $B$. 
  The adjoint satisfies $(A B)^{\mathsf{ad}} = B^{\mathsf{ad}} 
  A^{\mathsf{ad}}$.

  Now, in the formula for the extremal exponents, we are free to
  choose any matrix norm, as this does not affect the limit.  For
  $2\! \times \! 2$-matrices, one has
  $\| B^{\mathsf{ad}} \|^{}_{\mathrm{F}} = \| B \|^{}_{\mathrm{F}}$,
  where $\|.\|^{}_{\mathrm{F}}$ denotes the Frobenius norm. Our claim
  now follows from Proposition~\ref{prop:det-lim} after a simple
  calculation.
\end{proof}

In view of Lemma~\ref{lem:expo-sum}, we define
\[
    \chi^{B} (k) \, := \, \limsup_{n\to\infty} \myfrac{1}{n} \log
    \bigl\| B(k) \ts B(\lambda k) \cdots B(\lambda^{n-1} k)
    \bigr\| ,
\]
so that
$\chi^{}_{\max} (k) = \log\sqrt{\lambda} + \chi^{B} (k)$ and
$\chi^{}_{\min} (k) = \log\sqrt{\lambda} - \chi^{B} (k)$
holds for a.e.\ $k\in\RR$, together with $\chi^{}_{\max} (k) \geqslant
\chi^{}_{\min} (k)$. We can thus simply analyse $\chi^{B}$ from now
on, which clearly is a non-negative function. \smallskip

\subsection{Arguments in common}\label{sec:common}

Below, we need the \emph{mean} of a function. If $f$ is a Bohr almost
periodic function on $\RR$ (and thus in particular uniformly
continuous and bounded), its mean, $\MM (f)$, is defined by
\begin{equation}\label{eq:def-mean}
   \MM (f) \, = \lim_{T\to\infty} \myfrac{1}{T}
   \int_{x}^{x+T} \! \! f(t) \dd t \ts ,
\end{equation}
where $x\in\RR$ is arbitrary. By standard results, the mean of such an
$f$ exists for all $x\in\RR$, is independent of $x$, and the
convergence is actually uniform in $x$. This is also true when $f$ is
almost periodic in the sense of Stepanov, which in particular covers
some of our later situations; see \cite{Cord,BHL} for details. When
$f$ is a periodic function, with fundamental period $T$, the mean is
simply given by $\MM (f) = \frac{1}{T} \int_{0}^{T}\! f(t) \dd t$.

Observing that $\lvert p(.)\rvert^2$ with $p$ from
Eq.~\eqref{eq:p-def} is $1$-periodic (while $p$ itself need not be),
the simplest sufficient criterion for the positivity of all Lyapunov
exponents is given by
\begin{equation}\label{eq:m1}
     \log (\lambda) \, > \, \MM \bigl( \log
      \| B(.) \|^{2}_{\mathrm{F}}\bigr)  \, = 
     \int_{0}^{1} \! \log \bigl( 2 + \lvert p (t) 
      \rvert^{2} \bigr) \dd t
     \, = \int_{0}^{1} \! \log \big\lvert q (z)
      \big\rvert_{z=\ee^{2 \pi \ii t}} \dd t
     \, = \, \fm (q)
\end{equation}
where $q$ is the polynomial 
\begin{equation}\label{eq:q-poly}
   q (z) \, = \, 2 z^{m-1} + \bigl(1+z+\ldots+z^{m-1}\bigr)^{2}
\end{equation}
and the validity of $\overline{z} = z^{-1}$ on the unit circle was
used. Here, $\fm (q)$ denotes the \emph{logarithmic Mahler measure} of
$q$; see \cite{EW2,K-book} for background.  The integral can now once
again be calculated by means of Jensen's formula; see the Appendix for
some details.  The comparison between $\log (\lambda)$ and $\fm (q)$
is illustrated in Figure~\ref{fig:mlp}. \smallskip

\begin{figure}
\includegraphics[width=0.8\textwidth]{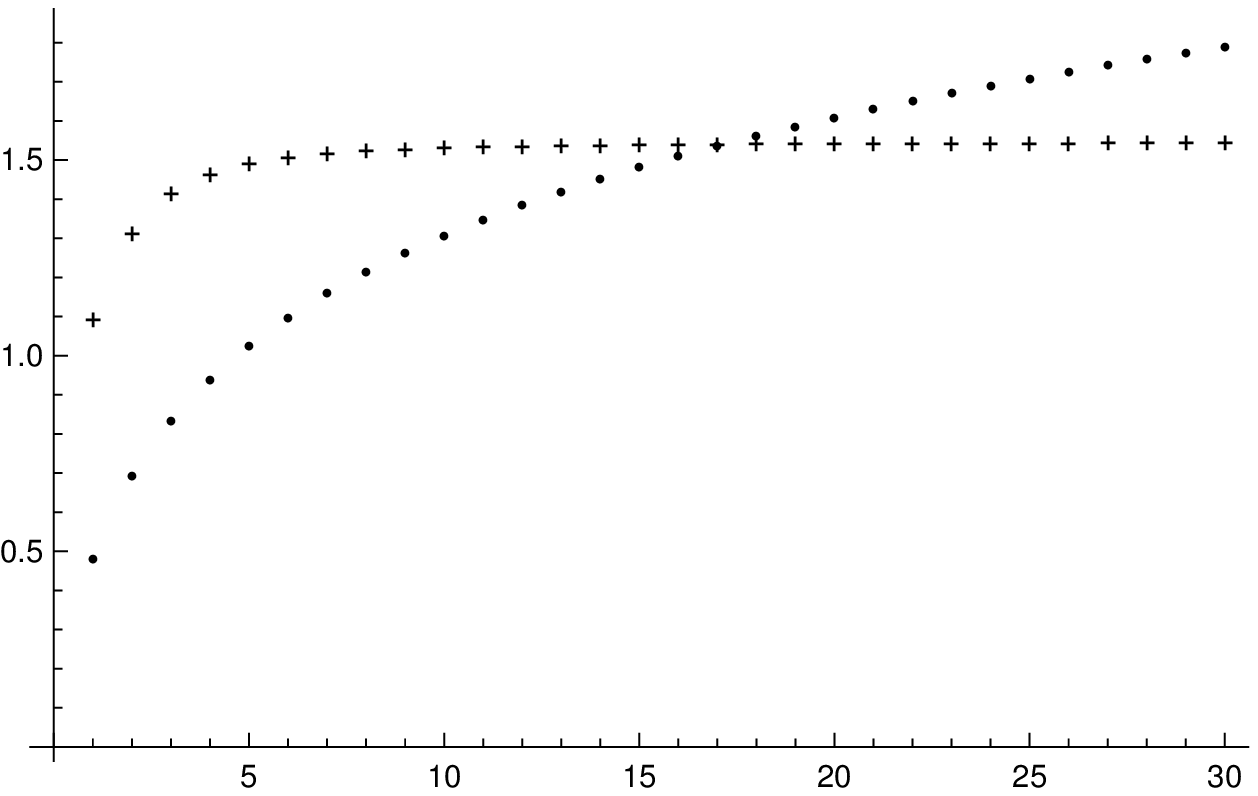}
\caption{The values of $\log(\lambda)$ (dots) and 
$\fm (q)$ (crosses) for $1\leqslant m\leqslant 30$.}
\label{fig:mlp}
\end{figure}

More generally, one has the following result.

\begin{lemma}\label{lem:18}
  For any\/ $m \geqslant 18$ and then a.e.\ $k\in\RR$, all Lyapunov
  exponents of the outward iteration \eqref{eq:outward-iter} are
  strictly positive and bounded away from\/ $0$.
\end{lemma}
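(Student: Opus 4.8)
The plan is to combine the sufficient criterion~\eqref{eq:m1} for positivity of \emph{all} Lyapunov exponents with the asymptotics of the two quantities $\log(\lambda)$ and $\fm(q)$ as $m\to\infty$, and then to reduce the claim to a finite numerical check. First I would note that, by~\eqref{eq:m1} together with Lemma~\ref{lem:expo-sum}, it suffices to establish the strict inequality $\log(\lambda^{+}_{m}) > \fm(q)$ for every $m\geqslant 18$; indeed, this gives $\chi^{B}(k) \leqslant \tfrac12\MM(\log\|B(.)\|_{\mathrm{F}}^{2}) < \tfrac12\log(\lambda)$ for a.e.\ $k$, hence $\chi^{}_{\min}(k) = \log\sqrt{\lambda} - \chi^{B}(k)$ is strictly positive and, being a.e.\ constant by ergodicity of $x\mapsto\lambda x$ along a.e.\ orbit, bounded away from $0$; and then $\chi^{}_{\max}(k) \geqslant \chi^{}_{\min}(k) > 0$ as well.

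The heart of the matter is therefore the estimate $\fm(q) < \log(\lambda^{+}_{m})$. For the right-hand side, $\lambda^{+}_{m} = \tfrac12(1+\sqrt{4m+1})$, so $\log(\lambda^{+}_{m}) = \log\sqrt{m} + o(1)$, more precisely $\log(\lambda^{+}_{m}) = \tfrac12\log m + \tfrac{1}{2\sqrt{m}} + O(1/m)$. For the left-hand side I would use the factorisation of $q(z) = 2z^{m-1} + (1+z+\dots+z^{m-1})^{2}$ worked out in the Appendix (which I may assume), and Jensen's formula, to obtain an explicit or near-explicit expression for $\fm(q)$; the dominant term should again be $\tfrac12\log m$, coming from the leading coefficient / the $\bigl((z^{m}-1)/(z-1)\bigr)^{2}$ part whose Mahler measure is $1$, with the ``$2z^{m-1}$'' perturbation contributing a bounded correction. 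Writing $q(z) = (1+z+\dots+z^{m-1})^{2}\bigl(1 + 2z^{m-1}/(1+z+\dots+z^{m-1})^{2}\bigr)$ and using $\fm\bigl((1+z+\dots+z^{m-1})^{2}\bigr)=0$, one is left to bound $\int_{0}^{1}\log\bigl|1 + 2z^{m-1}(z-1)^{2}/(z^{m}-1)^{2}\bigr|_{z=\ee^{2\pi\ii t}}\dd t$, which is $O(\log m)$ crudely but in fact should be seen to be $\tfrac12\log m + O(1)$ after combining with the zeros that migrate off the unit circle. The point is that the \emph{difference} $\log(\lambda^{+}_{m}) - \fm(q)$ tends to a positive constant (or at worst is bounded below by a positive constant for large $m$); this is exactly the separation of dots and crosses visible in Figure~\ref{fig:mlp}.

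Consequently, one shows that there is an explicit threshold $m_{0}$ beyond which $\log(\lambda^{+}_{m}) - \fm(q) \geqslant c_{0} > 0$ by an asymptotic argument, and then one verifies the finitely many remaining cases $18 \leqslant m < m_{0}$ by direct evaluation of the two numbers (each $\fm(q)$ being computable in closed form via Jensen's formula from the roots of $q$, as in the Appendix). Since $18$ is the smallest $m$ for which $\log(\lambda^{+}_{m}) > \fm(q)$ holds --- the crossover happens between $m=17$ and $m=18$, as Figure~\ref{fig:mlp} indicates --- the bound is sharp in the sense that the simple criterion~\eqref{eq:m1} fails for $m\leqslant 17$, which is precisely why those cases need the more delicate analysis carried out in the remainder of Section~\ref{sec:Lyapunov}.

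The main obstacle I anticipate is making the asymptotic comparison rigorous rather than merely heuristic: one must control $\fm(q)$ uniformly in $m$ with enough precision to extract a uniform positive gap, which means tracking where the roots of $2z^{m-1} + (1+z+\dots+z^{m-1})^{2}$ lie relative to the unit circle (they cluster near it, with a controlled number drifting slightly inside or outside), and then summing their contributions $\sum \log^{+}|z_{j}|$ against $\log|{\rm lc}(q)| = \log 2$. The cleanest route is probably to bound $\fm(q) \leqslant \tfrac12\log\bigl(\MM(|q|^{2})\bigr)$-type inequalities or to use the Appendix's explicit formula directly; either way, the estimate must be tight enough near $m=18$, where the gap is smallest, to close the argument without a case check creeping below $18$.
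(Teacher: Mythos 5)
Your overall skeleton matches the paper's: reduce the claim via the criterion \eqref{eq:m1} to the single inequality $\log(\lambda^{+}_{m}) > \fm(q)$, establish that inequality for all large $m$ by controlling $\fm(q)$, and close the finitely many remaining cases numerically. The reduction itself is fine (a.e.\ in $k$, $\chi^{B}(k)\leqslant\tfrac12\fm(q)$ by submultiplicativity and the equidistribution results underlying \eqref{eq:m1}, whence $\chi^{}_{\min}(k)\geqslant\tfrac12\bigl(\log\lambda-\fm(q)\bigr)$ uniformly). But the heart of your argument --- the asymptotics of $\fm(q)$ --- is wrong in a way that leaves a genuine gap. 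You assert that $\fm(q)$ has dominant term $\tfrac12\log m$ and that the difference $\log(\lambda^{+}_{m})-\fm(q)$ ``tends to a positive constant''. In fact $\fm(q)$ is \emph{uniformly bounded} in $m$: writing $q(z)=r(z)/(z-1)^{2}$ with $r(z)=z^{2m}+2z^{m+1}-6z^{m}+2z^{m-1}+1$, one has $\fm(q)=\fm(r)$, and Jensen's inequality together with Parseval gives $\fM(r)<\|r\|^{}_{2}=\sqrt{46}$ independently of $m$; moreover $\lim_{m\to\infty}\fm(q)\approx 1.5507$ exists by the two-variable approximation theorem. So $\log(\lambda^{+}_{m})-\fm(q)$ actually diverges like $\tfrac12\log m$. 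Your intuition that the ``$2z^{m-1}$ perturbation'' of the cyclotomic square contributes $\tfrac12\log m+O(1)$ to the log-integral is the error: the peak of $\lvert p\rvert^{2}=\bigl(\sin(m\pi t)/\sin(\pi t)\bigr)^{2}$ has height $m^{2}$ but width only $O(1/m)$, so after taking the logarithm its contribution is $o(1)$ and the whole integral stays bounded.

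The consequence is that your proposed route for large $m$ --- tracking the roots of $q$ near the unit circle and summing $\log^{+}\lvert z_{j}\rvert$ against $\log 2$ --- is both unnecessary and, as sketched, not a proof: you never establish a lower bound on the gap, you only assert that one ``should'' exist, and the asymptotic picture you base it on would, if true, make the claim delicate rather than easy. The paper's Appendix does the large-$m$ step in three lines with the $L^{2}$ bound above, yielding the inequality for all $m\geqslant 40$ (improved to $m\geqslant 23$ by the refinement in Remark~\ref{rem:better}), after which the cases $18\leqslant m\leqslant 22$ are checked numerically --- exactly the finite verification you also propose. Your closing remark that the crossover of \eqref{eq:m1} happens between $m=17$ and $m=18$ is consistent with Figure~\ref{fig:mlp} and Table~\ref{tab:num}, but it is a numerical observation, not something your asymptotics deliver. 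If you replace your asymptotic paragraph by the $\fM(r)<\|r\|^{}_{2}$ bound --- which you in fact gesture at in your final sentence --- the proof closes and essentially coincides with the paper's.
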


\begin{proof}
  Since $\chi^{}_{\max} (k) \geqslant \chi^{}_{\min} (k)$,
  we need to show that $\log\sqrt{\lambda} -c \geqslant \chi^{B} (k)$ 
  holds for some $c>0$ and a.e.\ $k\in \RR$.  A sufficient criterion for this is the
  inequality from Eq.~\eqref{eq:m1}. Since $\fm (q)$ is bounded, see
  Lemma~\ref{lem:m-bound} from the Appendix, it is clear that this
  inequality holds for all sufficiently large $m\in\NN$. By
  Lemma~\ref{lem:m-bound}, this is so for all $m\geqslant 40$, and the
  slightly better estimate from Remark~\ref{rem:better} improves this
  to all $m\geqslant 23$.

  In any case, a (precise) numerical investigation of the remaining
  cases shows that that our claim is indeed true for all
  $m \geqslant 18$; compare Figure~\ref{fig:mlp}.
\end{proof}

In order to establish our goal for the remaining values of $m$, we need 
to determine a suitable $N = N(m)$ such that
\begin{equation}\label{eq:compare}
     \log (\lambda) \, > \, \myfrac{1}{N} \,\MM \bigl( \log
     \| B^{(N)} (.) \|^{2}_{\mathrm{F}} \bigr) \, = \,
     \begin{cases}
     N^{-1} \int_{0}^{1} \log 
     \big\| B^{(N)} (k) \big\|^{2}_{\mathrm{F}} \dd k \ts , &
     \text{if $\lambda\in\ZZ$}, \\[1mm]
     N^{-1}
     \int_{[0,1]^2} \log \big\| \tilde{B}^{(N)} (x,y) \big\|^{2}_{\mathrm{F}}
     \dd x \dd y \ts , & \text{otherwise}. \end{cases}
\end{equation}
When $\lambda$ is not an integer, $\| B^{(N)} (.)\|^{2}_{\mathrm{F}}$
is generally not a periodic, but a quasiperiodic function. In this
case, we use the representation as a section through a doubly
$1$-periodic function according to Eq.~\eqref{eq:quasi-B}, which
permits the simple expression for the mean in \eqref{eq:compare}. The
latter can now be calculated numerically with good precision, and
without ambiguity. Note that the choice of the Frobenius norm
$\|.\|^{}_{\mathrm{F}}$ does not give the best bounds, but is rather
convenient otherwise.  The result is given in Table~\ref{tab:num},
with minimal values for $N(m)$. Consequently, we can sharpen
Lemma~\ref{lem:18} and complete the proof of Theorem~\ref{thm1b}
as follows.

\begin{prop}\label{prop:bounded-away}
  For any\/ $m\in\NN$ and then a.e.\ $k\in\RR$, all Lyapunov
  exponents of the outward iteration \eqref{eq:outward-iter} are
  strictly positive and bounded away from\/ $0$.  \qed
\end{prop}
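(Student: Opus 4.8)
The plan is to reduce the assertion, via Lemma~\ref{lem:expo-sum}, to a single scalar inequality for each $m$, and then to dispatch the finitely many small values of $m$ by the refined criterion~\eqref{eq:compare}. Since $\chi^{}_{\max}(k)\geqslant\chi^{}_{\min}(k)=\log\sqrt{\lambda}-\chi^{B}(k)$ for a.e.\ $k$ (this is exactly the normalisation following Lemma~\ref{lem:expo-sum}), it suffices to produce, for every $m\in\NN$, a constant $c=c(m)>0$ with $\chi^{B}(k)\leqslant\log\sqrt{\lambda}-c$ for a.e.\ $k\in\RR$; then both extremal exponents are strictly positive and bounded away from $0$. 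For $m\geqslant 18$ this is precisely Lemma~\ref{lem:18}, so only the range $m\in\{1,\dots,17\}$ needs attention.

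The heart of the matter is to pass from a spatial mean to an a.e.\ pointwise bound, namely to show that for every fixed $N\in\NN$ and a.e.\ $k\in\RR$,
\[
   \chi^{B}(k)\,=\,\limsup_{n\to\infty}\myfrac{1}{n}\log\bigl\|B^{(n)}(k)\bigr\|
   \;\leqslant\;\myfrac{1}{2N}\,\MM\bigl(\log\|B^{(N)}(.)\|^{2}_{\mathrm{F}}\bigr).
\]
Writing $n=qN+r$ with $0\leqslant r<N$ and using the cocycle identity $B^{(n)}(k)=B^{(N)}(k)\,B^{(N)}(\lambda^{N}k)\cdots B^{(N)}(\lambda^{(q-1)N}k)\,B^{(r)}(\lambda^{qN}k)$ together with submultiplicativity of $\|.\|^{}_{\mathrm{F}}$, the left-hand side is controlled, up to a boundary term vanishing after division by $n$, by the Birkhoff-type average of $\log\|B^{(N)}(.)\|$ along the sequence $(\lambda^{jN}k)^{}_{j\geqslant 0}$. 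For a.e.\ $k$ this sequence is uniformly distributed modulo $1$ and, as assembled in the proof of Proposition~\ref{prop:det-lim}, stays a distance at least $j^{-1-\varepsilon}$ away from the Delone set of zeros of $\det B$; hence the Sobol-type averaging theorem \cite[Thm.~6.4.8]{BHL} applies to the locally integrable function $\log\|B^{(N)}(.)\|^{2}_{\mathrm{F}}$ and the averages converge to its mean. When $\lambda$ is irrational, $\|B^{(N)}(.)\|^{2}_{\mathrm{F}}$ is genuinely quasiperiodic; one then works with its doubly $1$-periodic lift $\tilde{B}^{(N)}$ from~\eqref{eq:quasi-B} and the toral endomorphism $(x,y)\mapsto(x,y)M$ of $\TT^{2}$ from~\eqref{eq:correspond}, which is exactly why the mean takes the double-integral form in~\eqref{eq:compare}. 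Granting this, as soon as~\eqref{eq:compare} holds for some $N=N(m)$ we may take $c(m)=\tfrac{1}{2}\bigl(\log\lambda-\tfrac{1}{N}\MM(\log\|B^{(N)}(.)\|^{2}_{\mathrm{F}})\bigr)>0$, which settles that value of $m$.

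It therefore remains to verify~\eqref{eq:compare} for each $m\in\{1,\dots,17\}$. Since $N\mapsto\MM(\log\|B^{(N)}(.)\|^{2}_{\mathrm{F}})$ is subadditive (the mean being invariant under $x\mapsto\lambda^{N}x$), the normalised means converge, so it is enough to exhibit a single admissible $N=N(m)$; the minimal one is recorded in Table~\ref{tab:num}. For fixed $m$ and $N$, the right-hand side of~\eqref{eq:compare} is an explicit single (resp.\ double) integral over $[0,1]$ (resp.\ $[0,1]^{2}$) of a nonnegative real-analytic function with at worst isolated integrable logarithmic singularities, where $\det B^{(N)}$ vanishes, so it can be evaluated numerically with rigorous error control using the same splitting as in Proposition~\ref{prop:det-lim}. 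The degenerate value $m=1$ fits this scheme too; alternatively, the finer analysis of Section~\ref{sec:Fibo} shows $\chi^{B}(k)=0$ for a.e.\ $k$ there, so that $\chi^{}_{\min}(k)=\log\sqrt{\lambda}>0$ directly. Combining the two ranges with Lemma~\ref{lem:18} yields the claim for all $m\in\NN$.

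The main obstacle is the displayed inequality of the second paragraph: transferring the bound on the \emph{mean} of $\tfrac{1}{N}\log\|B^{(N)}\|$ to an a.e.\ \emph{pointwise} bound on $\chi^{B}$. This is more than Fekete's lemma; it requires that for a.e.\ $k$ the orbit $(\lambda^{n}k)$ equidistributes with controlled discrepancy \emph{and} avoids the zeros of $\det B$ at a polynomial rate, so that a Birkhoff/Sobol-type theorem may legitimately be applied to the logarithmically singular integrand — and, when $\lambda$ is irrational, that the corresponding orbit under $(x,y)\mapsto(x,y)M$ equidistributes on $\TT^{2}$. All of this is the package already built for Proposition~\ref{prop:det-lim}; once it is granted, what remains is the subadditive bookkeeping, a short computation from Lemma~\ref{lem:expo-sum}, and the finite numerical table.
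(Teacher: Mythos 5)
Your proposal follows essentially the same route as the paper: Lemma~\ref{lem:18} handles $m\geqslant 18$, and for $m\leqslant 17$ the criterion~\eqref{eq:compare} with the values $N(m)$ from Table~\ref{tab:num} supplies the gap $c(m)>0$, the passage from the mean of $\tfrac{1}{N}\log\|B^{(N)}\|^{2}_{\mathrm{F}}$ to an a.e.\ pointwise bound on $\chi^{B}$ resting on the same Sobol/discrepancy package assembled for Proposition~\ref{prop:det-lim} (and, in the quasiperiodic case, on the lift $\tilde{B}^{(N)}$ over $\TT^{2}$). One aside is wrong, though it does not affect your argument since the table already covers $m=1$: Section~\ref{sec:Fibo} does \emph{not} show $\chi^{B}(k)=0$ for the Fibonacci case --- there one finds numerically $\chi^{B}(k)\approx 0.163$; the vanishing of both exponents of the $B$-cocycle occurs for the constant-length substitution of Section~\ref{sec:integer}, not for $m=1$.
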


\begin{table}
\label{tab:num}
\caption{Some relevant values for the quantities in the inequality of 
  Eq.~\eqref{eq:compare}. 
  The numerical error is less than $10^{-3}$ in all cases listed.}
\renewcommand{\arraystretch}{1.2}
\begin{tabular}{|c|c@{\;\,\;}c@{\;\,\;}c@{\;\,\;}c@{\;\,\;}c
      @{\;\,\;}c@{\;\,\;}c@{\;\,\;}c@{\;\,\;}c@{\;\,\;}c|}
\hline
$m$ & 1 & 2 & 3 & 4 & 5 & 6 & 7 & 8 & 9 & 10 \\
\hline
$\log(\lambda)$ & 
0.481 & 0.693 & 0.834 & 0.941 & 1.027 & 1.099 & 1.161 &
1.216 & 1.265 & 1.309 \\
$N=N(m)$ & 
 6 & 4 & 4 & 3 & 3 & 3 & 2 & 2 & 2 & 2 \\
$\tfrac{1}{N} \MM \bigl( \log \| B^{(N)}(.) \|^{2}_{\mathrm{F}} \bigr)$ & 
 0.439 & 0.677 & 0.770 & 0.924 & 0.949 & 0.964 & 1.144 & 1.152 & 
 1.157 & 1.161 \\[0.5mm]
\hline\hline
$m$ & 11 & 12 & 13 & 14 & 15 & 16 & 17 & 18 & 19 & 20 \\
\hline
$\log(\lambda)$ & 
1.349 & 1.386 & 1.421 & 1.453 & 1.483 & 1.511 & 1.538 & 
1.563 & 1.587 & 1.609\\
$N=N(m)$ & 
2 & 2 & 2 & 2 & 2 & 2 & 2 & 1 & 1 & 1 \\
$\tfrac{1}{N} \MM \bigl( \log \| B^{(N)}(.) \|^{2}_{\mathrm{F}} \bigr)$ & 
   1.164 & 1.166 & 1.168 & 1.169 & 1.170 & 1.171 & 1.172
 & 1.546 & 1.547 & 1.547 \\[0.5mm]
\hline
\end{tabular}
\end{table}

\subsection{The Fibonacci case}\label{sec:Fibo}

Here, the leading eigenvalue is $\lambda = \tau$, the golden ratio,
which is a PV number. Essentially as a consequence of \cite[Thm.~2.9
and Prop.~3.8]{FSS}, which need some modification and extension to be
applicable here, the extremal Lyapunov exponents exist as limits, for
a.e.\ $k \in \RR$. Let us look into this in more detail, in a slightly
different way that provides an independent derivation of this
property.  Here, we have
\[
     B (k) \, = \, \begin{pmatrix} 1 & 1 \\
     \ee^{2 \pi \ii \tau k} & 0 \end{pmatrix} ,
\]
which is $\tau^{-1}$-periodic. However, this observation does 
not help because already
\[
    B^{(2)} (k) \, = \, B (k) \, B (\tau k) \, = \,
    \begin{pmatrix} 1 + \ee^{2 \pi \ii (\tau + 1) k} & 1 \\
    \ee^{2 \pi i \tau k} & \ee^{2 \pi i \tau k} \end{pmatrix}
\]
is genuinely quasiperiodic, with fundamental frequencies $\tau$ and
$1$. In line with our general approach from Eqs.~\eqref{eq:quasi-B} 
and \eqref{eq:quasi-p}, we now define
$\tilde{B}^{(n+1)} (x,y) = \tilde{B} (x,y) \, \tilde{B}^{(n)} \bigl(
(x,y) M \bigr)$
with 
\[
   \tilde{B}^{(1)} (x,y) \, = \, \tilde{B} (x,y) \, = \,
   \begin{pmatrix} 1 & 1 \\ \ee^{2 \pi \ii x} & 0 \end{pmatrix}
   \quad \text{and} \quad
   M \, = \, \begin{pmatrix} 1 & 1 \\ 1 & 0 \end{pmatrix}.
\]
Then, $\tilde{B}^{(n)} (x,y)$ defines a matrix cocycle over the
dynamical system defined on $\TT^{2}$ by the toral automorphism
$(x,y) \mapsto (x,y) M \bmod{1}$. By Oseledec's theorem, see
\cite{Viana}, the Lyapunov exponents for $\tilde{B}^{(n)}$ exist as
limits, for a.e.\ $(x,y)\in\TT^2$, and are constant.

However, what we really need is the existence of the Lyapunov
exponents for
\[
   B^{(n)} (k) \, = \, \tilde{B}^{(n)} (x,y)
       |^{}_{x = \tau k , \, y = k}
\]
for a.e.\ $k\in\RR$, which is a statement along the line
$\RR (\tau, 1)$, respectively its wrap-up on $\TT^2$. This is the
subspace defined by the left PF eigenvector of $M$. The problem here
is that this defines a null set for Lebesgue measure on $\TT^2$, so
that the previous argument does not immediately imply what we
need. However, for Lebesgue-a.e.\ starting point on the line
$\RR (\tau, 1)$, the iteration sequence on this line, taken modulo
$1$, is also equidistributed in $\TT^2$, by standard arguments around
Weyl's lemma. This allows for a relation between the result on the
line and that on $\TT^2$ as follows.

Any initial condition for the cocycle $\tilde{B}^{(n)}$ is
following an orbit of the toral automorphism that converges,
exponentially fast, towards an orbit on this special subspace.  This
is a consequence of the PV property of $\tau$ and the fact that the
second eigenvalue of $M$ is $1-\tau \approx - 0.618$, the algebraic
conjugate of $\tau$. Assume that the Lyapunov exponents for $B^{(n)}$
fail to exist as limits for a subset of $\RR (\tau, 1)$ of positive
measure. Then, this must also be true of the exponents for
$\tilde{B}^{(n)}$ for all initial conditions that lead to orbits which
approach the failing orbits on $\RR (\tau, 1)$. By standard arguments,
these initial conditions would constitute a set of positive measure,
now with respect to Lebesgue measure on $\TT^2$, in contradiction to
our previous finding. We thus have the following result.

\begin{fact}
  For\/ $m=1$ and a.e.\ $k\in\RR$, the Lyapunov exponents from
  Eq.~\eqref{eq:extremal} exist as limits, and are constant. \qed
\end{fact}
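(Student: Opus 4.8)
The plan is to lift the iteration to the $2$-torus, apply Oseledec's multiplicative ergodic theorem there, and then transfer the resulting a.e.\ statement to the Lebesgue-null line $L := \RR\ts(\tau,1)$ by exploiting the hyperbolicity of $M$. Concretely, I would view $\tilde{B}^{(n)}$ as a matrix cocycle over the toral automorphism $E^{}_{M}\colon (x,y)\mapsto (x,y)M \bmod 1$ on $\TT^2$, where $M=\left(\begin{smallmatrix}1&1\\1&0\end{smallmatrix}\right)$ has eigenvalues $\tau$ and $1-\tau$, neither a root of unity, so that $E^{}_{M}$ is ergodic for Lebesgue measure on $\TT^2$. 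The generator $\tilde{B}$ is real-analytic and bounded and --- the feature special to $m=1$ --- satisfies $\lvert\det\tilde{B}(x,y)\rvert\equiv 1$, so that $\tilde{B}^{-1}$ is bounded as well and $\log^{+}\lVert\tilde{B}^{\pm 1}\rVert\in L^{1}(\TT^2)$. Oseledec's theorem then yields that, for Lebesgue-a.e.\ $(x,y)\in\TT^2$, the two Lyapunov exponents of $\tilde{B}^{(n)}$ exist as genuine limits, and by ergodicity they agree a.e.\ with two constants $\chi^{\pm}$.

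The delicate point is that $B^{(n)}(k) = \tilde{B}^{(n)}(\tau k, k)$ is the restriction of this cocycle to $L$, which is Lebesgue-null in $\TT^2$, so the statement above does not restrict to $L$ for free. Here I would use that $L$ is the expanding eigendirection of $M$, while the complementary eigendirection belongs to the conjugate $1-\tau$ with $\lvert 1-\tau\rvert<1$: writing a point of $\RR^2$ as $a\ts(\tau,1)+b\ts(1-\tau,1)$, one has $E^{n}_{M}(x,y)-E^{n}_{M}\bigl(a\ts(\tau,1)\bigr) = b\ts(1-\tau)^{n}(1-\tau,1)\to 0$ exponentially fast, so that every forward orbit in $\TT^2$ is asymptotic, at rate $\lvert 1-\tau\rvert^{n}$, to an orbit that stays on $L\bmod 1$ (this is the PV property of $\tau$ at work; the a.e.\ equidistribution of line orbits via Weyl's lemma supplies the complementary link between the line and the torus). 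I would then argue by contradiction. Suppose the set $E\subseteq\RR$ of parameters $k$ for which the Lyapunov limits of $B^{(n)}(k)$ either fail to exist or fail to equal $\chi^{\pm}$ has positive one-dimensional Lebesgue measure. In the $(a,b)$-coordinates, the set $\{\,a\ts(\tau,1)+b\ts(1-\tau,1) : a\in E,\ b\in(0,1)\,\}$ has positive two-dimensional Lebesgue measure (the coordinate change being linear and invertible, this is plain Fubini), hence so does its image $S$ in $\TT^2$. For a.e.\ point of $S$ the Oseledec conclusion holds with value $\chi^{\pm}$; yet each point of $S$ has its forward orbit asymptotic to the forward orbit of a line point with parameter $k\in E$, and the cocycle products along the two orbits differ only by a telescoping sequence of factors $\one+F^{}_{\ell}$ with $\lVert F^{}_{\ell}\rVert = \cO\bigl(\lvert 1-\tau\rvert^{\ell}\bigr)$, an exponentially summable perturbation which --- because $\tilde{B}^{\pm 1}$ is bounded --- changes neither the existence of the Lyapunov limit nor its value, so these points too are exceptional. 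This contradicts the full-measure statement on $\TT^2$. Hence $E$ is Lebesgue-null: for a.e.\ $k\in\RR$ the Lyapunov exponents of $B^{(n)}(k)$ exist as limits and equal the constants $\chi^{\pm}$ (which, in the notation of Section~\ref{sec:Lyapunov}, are $\log\sqrt{\tau}\pm\chi^{B}$).

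I expect the one genuinely substantial step to be the perturbation lemma just invoked --- that two multiplicative matrix cocycles running over exponentially asymptotic orbits of the base map share the same Lyapunov behaviour --- with the caveat that one must take care that the argument controls the true limit, not merely the $\limsup$ appearing in Eq.~\eqref{eq:extremal}, and pins down its value; this is exactly where the boundedness of both $\tilde{B}$ and $\tilde{B}^{-1}$, equivalently $\lvert\det\tilde{B}\rvert\equiv 1$, enters. The measure-theoretic transfer itself is elementary (Fubini in the linear eigencoordinates of $M$), and the remaining ingredients --- hyperbolicity and ergodicity of $E^{}_{M}$, and the verification of the hypotheses of Oseledec's theorem --- are routine.
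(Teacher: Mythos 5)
Your proposal follows essentially the same route as the paper: lift to the cocycle $\tilde{B}^{(n)}$ over the toral automorphism defined by $M$ on $\TT^2$, apply Oseledec there, and transfer the a.e.\ conclusion to the Lebesgue-null line $\RR\ts(\tau,1)$ by exploiting that every orbit is exponentially asymptotic to a line orbit (the PV property of $\tau$) together with a positive-measure/Fubini contradiction. The perturbation step you single out as the substantial one is exactly what the paper subsumes under ``standard arguments,'' so your write-up is, if anything, slightly more explicit than the original.
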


One can check numerically that $\chi^{B} (k) \approx 0.16\ts\ts (3)$
in this case, and some further analysis with a Furstenberg-type
representation should result in a more reliable value.

\subsection{Integer inflation multipliers}\label{sec:integer}

In these cases, we know the absence of any continuous spectral
components already from Proposition~\ref{prop:MLD}.  Moreover, 
in view of Lemma~\ref{lem:cond-no-ac}, our
treatment in Section~\ref{sec:common} also 
confirms the absence of absolutely continuous
diffraction via the Lyapunov exponents.  Here, the exponents also
exist as limits for a.e.\ $k\in \RR$, by an application of Oseledec's
theorem to the matrix cocycle, viewed over the dynamical system
defined on $\TT$ by $x \mapsto \lambda x \bmod{1}$ with
$\lambda = \ell + 1$ according to Fact~\ref{fact:integer}.

\begin{fact}
  When\/ $m = \ell (\ell+1)$ for\/ $\ell\in\NN$, hence\/
  $\lambda = \ell+1$, the Lyapunov exponents from
  Eq.~\eqref{eq:extremal} exist as limits, for a.e.\ $k\in\RR$, and
  are constant.   \qed
\end{fact}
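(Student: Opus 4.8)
The plan is to mimic, in the genuinely $1$-periodic case $\lambda=\ell+1\in\ZZ$, the argument already sketched for the Fibonacci case in Section~\ref{sec:Fibo}, but now the situation is actually simpler because no passage to a higher torus is required. First I would observe that, since $\lambda$ is an integer, the Fourier matrix $B(k)$ from Eq.~\eqref{eq:F-mat} is a genuinely $1$-periodic, analytic matrix function of $k$, as already noted after Fact~\ref{fact:IDA}. Hence $B$ descends to a continuous matrix-valued function on the $1$-torus $\TT=\RR/\ZZ$, and the products $B^{(n)}(k)=B(k)\,B(\lambda k)\cdots B(\lambda^{n-1}k)$ are precisely the iterates of a matrix cocycle over the dynamical system $(\TT,\phi)$ with $\phi\colon x\mapsto \lambda x \bmod 1$. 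This map $\phi$ is the classical $\times\lambda$ endomorphism of the circle, which is ergodic with respect to Lebesgue (Haar) measure on $\TT$.

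The second step is to invoke the multiplicative ergodic theorem. The cocycle generated by $B$ is log-integrable: $\log^{+}\!\|B(\cdot)\|$ is bounded (continuity on the compact torus), and $\log\|B(\cdot)^{-1}\|$ fails to be bounded only near the finite set $Z_m=\tfrac1m\ZZ\setminus\ZZ$, where $\det B$ vanishes; but $\log|\det B(\cdot)|=\log|1+z+\dots+z^{m-1}|_{z=\ee^{2\pi\ii k}}$ is locally Lebesgue-integrable on $\TT$ (as already used in the proof of Proposition~\ref{prop:det-lim}), and $\log\|B^{-1}\|$ is controlled by $\log\|B^{\mathsf{ad}}\|-\log|\det B|$ exactly as in the proof of Lemma~\ref{lem:expo-sum}, so the required integrability of both $\log^{+}\!\|B\|$ and $\log^{+}\!\|B^{-1}\|$ holds. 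Oseledec's theorem (in the invertible-cocycle form, e.g.\ \cite{Viana}) then applies to $(\TT,\phi,\text{Lebesgue})$ and yields, for Lebesgue-a.e.\ $x\in\TT$, the existence as genuine limits (not merely $\limsup$'s) of
\[
    \lim_{n\to\infty}\myfrac{1}{n}\log\|B^{(n)}(x)\,v\|
\]
for every nonzero $v\in\CC^2$, with values in a fixed finite set of Lyapunov exponents that, by ergodicity of $\phi$, are $\phi$-invariant constants a.e. Specialising $v$ to extremal directions recovers $\chi^{B}(x)$ (and likewise the inward exponent) as an a.e.\ constant limit, and then the identity $\chi^{}_{\max}+\chi^{}_{\min}=\log\lambda$ from Lemma~\ref{lem:expo-sum} pins down both extremal exponents of Eq.~\eqref{eq:extremal} as limits.

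The final bookkeeping step is the passage from a.e.\ $x\in\TT$ back to a.e.\ $k\in\RR$: since the quotient map $\RR\to\TT$ is locally a translation and sends Lebesgue-null sets to Lebesgue-null sets and vice versa, an a.e.\ statement on $\TT$ is equivalent to an a.e.\ statement on $\RR$, and $B^{(n)}(k)$ as defined in Section~\ref{sec:Lyapunov} agrees with the cocycle iterate evaluated at $x=k\bmod 1$. This gives the claim for a.e.\ $k\in\RR$, and the constancy is inherited from $\TT$. I do not expect a serious obstacle here; the one point requiring a little care is the integrability of $\log\|B^{-1}\|$ near the zero set $Z_m$ of $\det B$, which is precisely why one records that $\log|\det B|$ is locally integrable (Jensen's formula, zeros on the unit circle only) — everything else is a direct citation of Oseledec's theorem for a cocycle over the ergodic circle endomorphism $x\mapsto\lambda x\bmod 1$, together with Lemma~\ref{lem:expo-sum}.
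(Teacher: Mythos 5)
Your proposal is correct and follows essentially the same route as the paper, which likewise disposes of this Fact by viewing $B^{(n)}$ as a matrix cocycle over the ergodic endomorphism $x \mapsto \lambda x \bmod 1$ of $\TT$ (possible precisely because $B$ is $1$-periodic for integer $\lambda$) and invoking Oseledec's theorem. Your additional care about the log-integrability of $\log\lvert\det B\rvert$ near $Z_m$ and the transfer of the a.e.\ statement from $\TT$ back to $\RR$ only makes explicit what the paper leaves implicit.
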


As another way to look at the problem,
let us add a quick analysis of the constant length substitution
\begin{equation}\label{eq:alt-int}
   \tilde{\varrho}^{}_{m} : \, \begin{cases}
   a \mapsto a b^{\ell} &  \\ b \mapsto a^{\ell+1} &
   \end{cases}
\end{equation}
with $\ell\in\NN$, which defines a hull that is MLD to the one defined
via $\varrho^{}_{m}$ for $m=\ell(\ell+1)$ by Proposition~\ref{prop:MLD}, so
the spectral type of both systems must be the same.  The displacement
matrix is
\[
    T \, = \, \begin{pmatrix} 0 & \{ 0, 1, 2, \ldots , \ell\} \\
   \{ 1,2, \ldots , \ell\} & \varnothing \end{pmatrix} ,
\]
which results in the Fourier matrix
\[
    B(k) \, = \, \begin{pmatrix} 1 &  \psi^{}_{\ell} (z) \\
    z \, \psi^{}_{\ell-1} (z) & 0 \end{pmatrix}_{z=\ee^{2 \pi \ii k}}
\]
with $\psi^{}_{\ell} (z) := 1 + z + \ldots + z^{\ell}$.
One gets an analogue to Eq.~\eqref{eq:m1} in the form
\[
    \MM \bigl( \log  \| B(.) \|^{2}_{\mathrm{F}}\bigr)  \, = 
    \int_{0}^{1} \log\ts \biggl| \frac{s(z)}{(z-1)^2}
    \biggr|^{}_{z=\ee^{2 \pi \ii t}} \dd t
    \, = \, \fm (s) \ts ,
\]
with $s(z) = z^{2\ell+2} + z^{2\ell+1} + z^{\ell+2} - 6 z^{\ell+1}
+z^{\ell} + z + 1$.
As we explain in more detail in the Appendix, we used
$\fm \bigl( (z-1)^2 \bigr) = 0$ in an intermediate step.

One could now repeat the general analysis of Section~\ref{sec:common}
in this case, with an outcome of a similar kind. However, there is a
more efficient way as follows. First, observe that we now have
$\det ( B(k)) = - z \, \psi^{}_{\ell-1} (z) \, \psi^{}_{\ell} (z)$
with $z=\ee^{2 \pi \ii k}$, which is a product of a monic polynomial
(in the variable $z$) with two cyclotomic ones.  Consequently, the
corresponding logarithmic Mahler measures vanish, and we once
again get
\begin{equation}\label{eq:det-2}
   \lim_{n\to\infty} \myfrac{1}{n} \log \ts\ts \bigl|
   \det (B^{(n)} (k) ) \bigr| \, =  \int_{0}^{1}
   \log \ts\ts \bigl| \det ( B(t)) \bigr| \dd t \, = \,
   \fm \bigl(z \, \psi^{}_{\ell-1} (z) \, \psi^{}_{\ell} 
   (z) \bigr) \, = \, 0 \ts ,
\end{equation}
for a.e.\ $k\in \RR$, as in Proposition~\ref{prop:det-lim}. 
Next, observe that $v = (1,1)$ is a common left eigenvector
of $B (k)$ for all $k\in\RR$, with eigenvalue $\psi^{}_{\ell} (z)$
for $z=\ee^{2 \pi \ii k}$. This gives
\[
   \lim_{n\to\infty} \myfrac{1}{n} \log \ts\ts
   \bigl\| v \ts B^{(n)} (k) \bigr\| \, = \, \fm (\psi^{}_{\ell})
   \, = \, 0
\]
for a.e.\ $k\in\RR$. In view of Eq.~\eqref{eq:det-2}, this implies
that both exponents of the cocycle $B^{(n)}$ vanish in this case.

Now, we still have $\chi^{}_{\min} + \chi^{}_{\max} = \log (\lambda)$
as in Lemma~\ref{lem:expo-sum}, where $\lambda = {\ell+1}$,
despite the fact that we now consider the substitution from
Eq.~\eqref{eq:alt-int}.  With this derivation, we have actually shown
the following result.

\begin{coro}
  The extremal Lyapunov exponents for the outward iteration defined by
  the constant-length substitution \eqref{eq:alt-int} are equal, and
  given by\/
  $\, \chi^{}_{\min} = \chi^{}_{\max} =  \log \sqrt{\ell + 1}$.
  \qed
\end{coro}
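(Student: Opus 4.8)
The plan is to read off the result directly from the computations already assembled in Section~\ref{sec:integer}, so the "proof" is really a short bookkeeping argument. First I would recall the two facts just established for the cocycle $B^{(n)}$ attached to the constant-length substitution \eqref{eq:alt-int}: by Eq.~\eqref{eq:det-2}, $\frac{1}{n}\log|\det(B^{(n)}(k))| \to \fm\bigl(z\,\psi^{}_{\ell-1}(z)\,\psi^{}_{\ell}(z)\bigr)=0$ for a.e.\ $k$, since this determinant is, up to the unimodular factor $z$, a product of cyclotomic polynomials whose logarithmic Mahler measures vanish; and, using the common left eigenvector $v=(1,1)$ with eigenvalue $\psi^{}_{\ell}(z)$, one gets $\frac{1}{n}\log\|v\,B^{(n)}(k)\| \to \fm(\psi^{}_{\ell}) = 0$ for a.e.\ $k$. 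These two statements pin down both Lyapunov exponents of the cocycle $B^{(n)}$: the existence of a direction along which the growth rate is $0$ forces $\chi^{B}_{\min}=0$ (the minimal exponent of $B^{(n)}$ itself, not to be confused with $\chi^{}_{\min}$ of the outward iteration), while the vanishing of the average of $\log|\det|$ forces the sum of the two exponents of $B^{(n)}$ to be $0$; since the smaller is $0$ and the sum is $0$, the larger is $0$ as well. Hence $\chi^{B}(k)=0$ for a.e.\ $k$.

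Next I would feed this into the relation $\chi^{}_{\min}+\chi^{}_{\max}=\log(\lambda)$ from Lemma~\ref{lem:expo-sum}, which was derived for a general Fourier matrix $B$ of a primitive binary inflation and therefore applies verbatim to the cocycle built from \eqref{eq:alt-int}, with $\lambda=\ell+1$ here by Fact~\ref{fact:integer}. Combined with $\chi^{}_{\max}(k)=\log\sqrt{\lambda}+\chi^{B}(k)$ and $\chi^{}_{\min}(k)=\log\sqrt{\lambda}-\chi^{B}(k)$, the vanishing of $\chi^{B}$ immediately yields $\chi^{}_{\min}=\chi^{}_{\max}=\log\sqrt{\lambda}=\log\sqrt{\ell+1}$ for a.e.\ $k$. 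Since these extremal exponents are constant a.e.\ (by the application of Oseledec's theorem over the toral dynamical system $x\mapsto(\ell+1)x \bmod 1$, as noted just before the corollary), the conclusion holds with genuine equality, not merely almost everywhere, and the statement follows.

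The only genuinely delicate point — and the one I would state carefully rather than wave at — is the passage from "$v=(1,1)$ grows at rate $0$ along the cocycle" to "$\chi^{B}=0$". One must argue that the decomposition of the Frobenius-norm growth into two Lyapunov exponents, together with $\sum \chi = 0$, really forces the top exponent to be $0$ and not positive (with the bottom one negative to compensate). Here the sign of $\det$ is handled by passing to $|\det|$, as in Proposition~\ref{prop:det-lim}, and the invariance of the invariant direction spanned by $v$ under all $B(\lambda^{\ell}k)^{T}$ (because $v$ is a \emph{left} eigenvector for every $k$) is what guarantees that $0$ is actually attained as a Lyapunov exponent; a naive subadditivity bound would only give $\chi^{B}\ge 0$. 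I would therefore spend one sentence noting that the pair of exponents of $B^{(n)}$ is $\{0,0\}$: the lower one is $\le 0$ by the eigenvector estimate read along $v$, it is $\ge 0$ since it is bounded below by $\frac{1}{n}\log|\det B^{(n)}|$ minus the upper one which is itself $\le$ the full norm growth, and then $\sum=0$ closes the loop. Everything else is the routine substitution $\lambda=\ell+1$ and arithmetic.
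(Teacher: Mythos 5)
Your argument is correct and follows the paper's own route essentially verbatim: the vanishing logarithmic Mahler measure of $\det(B(k))=-z\,\psi^{}_{\ell-1}(z)\,\psi^{}_{\ell}(z)$ gives zero sum of the exponents of the cocycle $B^{(n)}$, the common left eigenvector $v=(1,1)$ with cyclotomic eigenvalue $\psi^{}_{\ell}(z)$ shows that $0$ is attained as an exponent, hence both vanish, and Lemma~\ref{lem:expo-sum} with $\lambda=\ell+1$ then yields $\chi^{}_{\min}=\chi^{}_{\max}=\log\sqrt{\ell+1}$. Your closing discussion merely makes explicit (via Oseledec over $x\mapsto(\ell+1)x \bmod 1$) the step the paper compresses into ``this implies that both exponents of the cocycle $B^{(n)}$ vanish'', so no substantive difference remains.
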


Also this approach implies the diffraction spectrum
to be singular. However, as before, this is only a consistency check
because Dekking's criterion (see the proof of Proposition~\ref{prop:MLD})
already gives a stronger result, namely the pure point nature of the spectrum.

\section*{Appendix}

Here, we consider some logarithmic Mahler measures, in particular
$\fm (q)$ for the polynomial $q$ from Eq.~\eqref{eq:q-poly} with
$m\in\NN$. The polynomials $q$ seem to be irreducible over $\ZZ$,
though we have no general proof for this observation.  As follows from
a simple calculation, $\fm (q)$ takes the values $\log (3)$ for $m=1$
and $\log \bigl( 2 + \sqrt{3}\, \bigr)$ for $m=2$. It is known that
one must have $\fm (q) = \log (\xi)$ where $\xi$ is a Perron number.
A little experimentation shows that $\xi$ is a Salem number for $m=3$,
namely the largest root of $z^4 - 3 z^3 - 4 z^2 - 3 z + 1$, and a
Pisot number for $m=4$, this time the largest root of
$z^4 - 4 z^3 - 2 z^2 + 2 z + 1$. For $m=5$, one finds that $\xi$ is
the largest root of $z^8 - 6 z^7 + 7 z^6 - 3 z^4 + 7 z^2 - 6 z + 1$,
which is genuinely Perron, as the second largest root of this
irreducible polynomial, with approximate value $1.354 > 1$, lies
outside the unit circle. It would be interesting to know more 
about the numbers that show up here.

More generally, expressing $\lvert p (t)\rvert^2$ in Eq.~\eqref{eq:m1}
as $\bigl(\sin (m \pi t)/\sin (\pi t)\bigr)^2$, one has
\begin{equation}\label{eq:sin-form}
   \fm (q) \, = 
   \int_{0}^{1} \log \left( 2 + \left(
   \frac{\sin (m \pi t)}{\sin (\pi t)} \right)^{\! 2}
   \, \right) \dd t \ts .
\end{equation}
Since $\sin(m \pi t)^2 \leqslant 1$, one gets a simple
upper bound as
\[
\begin{split}
   \fm (q) \, & \leqslant \int_{0}^{1} \log
   \frac{1+2\, \sin(\pi t)^2}{\sin (\pi t)^2} \dd t
   \, = \, \log (2) + \int_{0}^{1} \log 
   \frac{2 - \cos (2 \pi t)}{1 - \cos (2 \pi t)} 
   \dd t \\[2mm]
   &  = \, \log (2) + \fm \bigl(z^2 - 4 z + 1 \bigr) - 
     \fm \bigl( (z-1)^2 \bigr) \, = \,
     \log \bigl( 4 + 2 \sqrt{3} \, \bigr)
     \, \approx \, 2.010 \ts ,
\end{split}
\]
where the logarithmic Mahler measures of the quadratic polynomials
were evaluated via Jensen's formula again.  This shows that $\fm (q)$
is bounded for our family of polynomials.  A slightly better bound can
be obtained as follows.

\begin{lemma}\label{lem:m-bound}
   For any\/ $m\in\NN$, the logarithmic Mahler measure of the
   polynomial\/ $q$ from Eq.~\eqref{eq:q-poly} satisfies
   the inequality\/
   $\, \fm (q)  <  \log \sqrt{46}  \approx
     1.914 {\ts\ts} 321$.
\end{lemma}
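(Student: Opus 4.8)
The plan is to bound the integral in Eq.~\eqref{eq:sin-form} more carefully than the crude estimate $\sin(m\pi t)^2 \leqslant 1$ allows. Writing $c(t) := \lvert p(t)\rvert^2 = \bigl(\sin(m\pi t)/\sin(\pi t)\bigr)^2$, we have $\fm(q) = \int_0^1 \log\bigl(2+c(t)\bigr)\dd t$, and the idea is to combine two different bounds on $\log(2+c(t))$ on two complementary parts of $[0,1]$. Near the points where $\sin(\pi t)$ is small — that is, near $t=0$ and $t=1$ — the function $c(t)$ is large (of order $m^2$), but this region has small measure; away from these points, $c(t)$ is bounded by a constant independent of $m$. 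Concretely, I would fix a cutoff $\delta = \delta(m)$ and split $\int_0^1 = \int_{\lvert t\rvert<\delta \text{ or } \lvert 1-t\rvert<\delta} + \int_{\text{rest}}$. On the central region, use $c(t) \leqslant 1/\sin(\pi\delta)^2$ to get a uniform bound on the integrand; on the peak region near $0$, use the sharper comparison $\sin(m\pi t)^2 \leqslant \min\{1, m^2\sin(\pi t)^2\}$ together with $\sin(\pi t) \geqslant 2t$ for small $t$, so that $\log(2+c(t)) \leqslant \log\bigl(2 + 1/(2t)^2\bigr)$, which is integrable near $0$ with an explicitly computable integral.

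The key steps, in order: (i) record the identity $\fm(q) = \int_0^1 \log\bigl(2 + (\sin(m\pi t)/\sin(\pi t))^2\bigr)\dd t$ from Eq.~\eqref{eq:sin-form} and the elementary inequalities $\sin(m\pi t)^2 \leqslant 1$ everywhere and $\sin(\pi t) \geqslant 2t$ for $t\in[0,\tfrac12]$; (ii) choose the split point and compute the contribution of the two peak intervals using $\int_0^{\delta}\log\bigl(2 + \tfrac{1}{4t^2}\bigr)\dd t$, which has a closed form in terms of $\log$ and $\arctan$; (iii) bound the contribution of the central region by $(1-2\delta)\cdot\log\bigl(2 + 1/\sin(\pi\delta)^2\bigr)$; (iv) optimise over $\delta$ numerically, or simply pick a convenient explicit value of $\delta$ (the paper only needs a bound below $\log\sqrt{46}$, not the optimal one), and verify that the sum of the two pieces is strictly less than $\log\sqrt{46} \approx 1.914321$. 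Using symmetry $c(1-t) = c(t)$ halves the bookkeeping. One should also double-check the strictness of the inequality, which is automatic since $\sin(m\pi t)^2 < 1$ on a set of full measure unless $m=1$, and the case $m=1$ gives $\fm(q) = \log 3 < \log\sqrt{46}$ directly.

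The main obstacle I anticipate is making the split sharp enough: the crude global bound $\log(4+2\sqrt3) \approx 2.010$ already derived in the excerpt is \emph{larger} than $\log\sqrt{46} \approx 1.914$, so a naive single-region estimate will not suffice, and one genuinely has to exploit that the large values of $c(t)$ are confined to a short interval. The delicate point is that as $m\to\infty$ the peak interval where $c(t) \gtrsim 1$ has width $\sim 1/m \to 0$, so one must be careful that the bound is uniform in $m$; the estimate $\log\bigl(2+\tfrac1{4t^2}\bigr)$ for the peak integrand is the right device precisely because it no longer involves $m$ once we have used $\sin(m\pi t)^2\leqslant 1$, and its integral over $(0,\delta)$ converges. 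A secondary subtlety is that $\sin(m\pi t)^2 \leqslant 1$ is wasteful exactly in the peak region for large $m$ where $m^2\sin(\pi t)^2$ can be the better bound, but since we only need a fixed numerical target, the simpler bound $\sin(m\pi t)^2\leqslant 1$ combined with a well-chosen $\delta$ should already close the gap; this is confirmed by Remark~\ref{rem:better}, which sharpens the constant further. Finally, one verifies numerically that the resulting explicit expression in $\delta$ dips below $\log\sqrt{46}$ for a suitable $\delta$ (around $\delta \approx 0.1$), completing the proof.
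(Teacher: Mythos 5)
Your strategy cannot reach the stated bound, for a structural reason: every estimate you use is pointwise \emph{weaker} than the crude bound that is already known to be insufficient. Once you invoke $\sin(m\pi t)^2\leqslant 1$, the best any decomposition of $[0,1]$ can produce is
\[
   \int_{0}^{1}\log\Bigl(2+\frac{1}{\sin(\pi t)^{2}}\Bigr)\dd t
   \,=\,\log\bigl(4+2\sqrt{3}\,\bigr)\,\approx\,2.010\ts ,
\]
and both of your regional bounds lie on the wrong side of this: on the central region you replace $1/\sin(\pi t)^{2}$ by the \emph{larger} constant $1/\sin(\pi\delta)^{2}$, and on the peak region you replace it by the \emph{larger} function $1/(2t)^{2}$, precisely because $\sin(\pi t)\geqslant 2t$ there. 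Hence your total is bounded below by $\log(4+2\sqrt{3}\,)>\log\sqrt{46}$ for \emph{every} $\delta$, and no optimisation can close the gap (with $\delta=0.1$ the two pieces already sum to about $3.07$; with $\delta=0.3$, to about $2.45$). The feature you identify --- that the large values of $c(t)$ are confined to short intervals near $t=0,1$ --- is already fully priced into $\log(4+2\sqrt{3}\,)$, because the singularity of $\log\bigl(1/\sin(\pi t)^{2}\bigr)$ is integrable. What must be exploited instead is that $\sin(m\pi t)^{2}$ \emph{oscillates}, so its effective value under the integral is its mean $\tfrac12$ rather than its maximum $1$; that is exactly the mechanism of Remark~\ref{rem:better}. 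The truncation $\min\{1,m^{2}\sin(\pi t)^{2}\}$ you mention does not rescue the argument either: the resulting bound increases to $\log(4+2\sqrt{3}\,)$ as $m\to\infty$, so it fails for all large $m$, whereas the lemma is claimed uniformly in $m$.

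The paper's proof avoids the sine formula \eqref{eq:sin-form} altogether. One writes $q(z)=r(z)/(z-1)^{2}$ with the pentanomial $r(z)=z^{2m}+2z^{m+1}-6z^{m}+2z^{m-1}+1$, so that $\fm(q)=\fm(r)$ because $\fm\bigl((z-1)^{2}\bigr)=0$. Strict convexity of the exponential (Jensen's inequality) gives $\fM(r)<\|r\|^{}_{1}\leqslant\|r\|^{}_{2}$ on the unit circle, and Parseval evaluates $\|r\|_{2}^{2}$ as the sum of the squared coefficients, $1+4+36+4+1=46$ --- which is where the otherwise unmotivated constant $\sqrt{46}$ in the statement comes from. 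If you prefer an analytic route through \eqref{eq:sin-form}, the viable one is that of Remark~\ref{rem:better}: replace $\sin(m\pi t)^{2}$ by its mean $\tfrac12$ using concavity of the logarithm, which yields the even better bound $\log\bigl(3+\sqrt{5}\,\bigr)$.
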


\begin{proof}
Here, we employ an argument from \cite{Clunie,Borwein} that
was also used, in a similar context, in \cite{Neil}. By a simple
geometric series calculation, one finds that
$q (z) = \frac{r (z)}{(z-1)^2}$ with
\begin{equation}\label{eq:r-def}
   r (z) \, = \, z^{2m} + 2 z^{m+1} - 6 z^{m} + 
   2 z^{m-1} + 1 \, = \sum_{\ell=0}^{2m} c^{}_{\ell} \, z^{\ell}.
\end{equation}
Consequently, we have $\fm (q) = \fm (r) - \fm \bigl( (z-1)^2\bigr)
= \fm (r)$.

Let $\fM (r)  = \exp ( \fm  (r))$ be the (ordinary)  Mahler measure of
$r$;  compare \cite[Sec.~1.2]{EW2}.   By the  strict convexity  of the
exponential function  and Jensen's inequality,  see \cite[Ch.~2.2]{LL}
for a suitable formulation, one finds
\[
    \fM (r) \, < \int_{0}^{1} \bigl| r (z) \bigr|_{z=\ee^{2 \pi \ii t}} \dd t
    \, = \, \| r \|^{}_{1} \, \leqslant \, \| r \|^{}_{2} \ts ,
\]
where $r = r(t)$ is considered as a trigonometric polynomial on $\TT$
(with the usual $1$-periodic extension to $\RR$). In fact, since $r$
is not a monomial, we also have $\| r \|^{}_{1} < \| r \|^{}_{2}$.

Assume that $m\geqslant 2$, so that the exponents of $r (z)$ in
Eq.~\eqref{eq:r-def} are distinct. Consequently, by Parseval's
equation, we may conclude that
\[
     \| r \|^{2}_{2} \, = \sum_{\ell=0}^{2m} \lvert c^{}_{\ell}\rvert^2
     \, = \, 46 \ts ,
\]
so that $\fM (r) < \sqrt{46}$, independently of $m$. This inequality
trivially also holds for $m=1$, and we get
$\fm (q) = \fm (r) < \log \sqrt{46}$ for all $m\in\NN$ as claimed.
\end{proof}

With this bound, one has $\log (\lambda) > \fm (q)$ for all
$m\geqslant 40$, where $\lambda = \lambda^{+}_{m}$ as before.

\begin{remark}\label{rem:better}
  An even better bound can be obtained from Eq.~\eqref{eq:sin-form} by
  observing that, as $t$ varies a little, $\sin (m \pi t)^2$
  oscillates quickly when $m$ is large (with mean $\frac{1}{2}$),
  while $\bigl(\sin (\pi t)\bigr)^2$ remains roughly constant.  Under
  the integral, one can then replace
  $\bigl(\sin (m \pi t)/\sin (\pi t)\bigr)^2$ by
  $\frac{1}{2} \bigl(\sin (\pi t)\bigr)^{-2}$, which still gives an
  upper bound for $\fm (q)$ because
  $\frac{\dd^2}{\dd t^2} \log (t) < 0$ on $\RR_{+}$. Now,
\[
    \fm (q)  \, \leqslant \int_{0}^{1} \log
    \frac{3 - 2 \cos (2 \pi t)}{1 - \cos (2 \pi t)} \dd t
    \, = \, \fm \bigl(z^2 - 3 z + 1 \bigr) + \log (2) \, = \,
    \log \bigl( 3 + \sqrt{5}\, \bigr) 
    \, \approx \, 1.655 {\ts\ts} 571 \ts ,
\]
which is smaller than $\log (\lambda)$, where
$\lambda = \lambda^{+}_{m}$ as above, for all $m \geqslant 23$.
\exend
\end{remark}

The values $\fm (q)$, as a function of $m\in\NN$, seem to be increasing,
so that $\lim_{m\to\infty} \fm (q)$ would be the optimal upper bound.
The limit exists because $\fm (q) = \fm (r)$, and the polynomial $r$
satisfies $r (z) = \tilde{r} (z, z^m)$ with
\[
     \tilde{r}  (z,w) \, = \, -w \left(6 - 2 \bigl( z + 
      z^{-1} \bigr) - \bigl( w + w^{-1} \bigr) \right) .
\]
By a classic approximation theorem for two-dimensional Mahler
measures, see \cite[Thm.~3.21]{EW2}, one has
$\lim_{m\to\infty} \fm \bigl( \tilde{r} (z, z^m) \bigr) = \fm \bigl(
\tilde{r} (z,w)\bigr)$, where
\[
\begin{split}
    \fm (\tilde{r} ) \, & = \int_{\TT^2} \log \bigl( 6 -
    2 \cos (2 \pi t_1) - 4 \cos (2 \pi t_2) \bigr)
    \dd t_1 \dd t_2 \\[1mm]
    & = \, 2 \int_{0}^{1} \mathrm{arsinh} \bigl( \sqrt{2} \,
     \sin (\pi t_2) \bigr) \dd t_2
    \, \approx \, 1.550 {\ts} 675 \ts .
\end{split}
\]
So, when $\fm (q)$ is an increasing function (which we did not prove),
we immediately get the estimate $\log (\lambda) > \fm (q) $ for all
$m\geqslant 18$.

\begin{remark}
  The polynomial $s $ from Section~\ref{sec:integer} can be analysed
  in a completely analogous way. Here, one has
  $s (z) = - z^{\ell +1} \left( 6 - \bigl( z + z^{-1} \bigr) - \bigl(
    w + w^{-1}\bigr) - \bigl( zw + (zw)^{-1} \bigr) \right)$,
  and the approximation theorem results in
\[
    \lim_{\ell\to\infty} \fm (s) \, = \int_{\TT^2}
    \log \bigl( 6 - 2 \cos (2 \pi t_1) - 2 \cos (2 \pi t_2)
     - 2 \cos (2 \pi (t_1 + t_2)) \bigr) \dd t_1 \dd t_2
    \, \approx \, 1.615 \ts .
\]  
  Moreover, various other properties are similar to those of
  the polynomial $q$ from above.
\exend
\end{remark}

\section*{Acknowledgements}

It is our pleasure to thank Michael Coons, David Damanik, Natalie
P.~Frank, Franz G\"{a}hler, Andrew Hubery, E.\ Arthur (Robbie)
Robinson and Boris Solomyak for discussions. 
This work was supported by the German
Research Foundation (DFG), within the CRC 1283.

\end{document}